\begin{document}
\newcommand{\per}{{\rm per}}
\newtheorem{teorema}{Theorem}
\newtheorem{lemma}{Lemma}
\newtheorem{utv}{Proposition}
\newtheorem{svoistvo}{Property}
\newtheorem{sled}{Corollary}
\newtheorem{con}{Conjecture}

\author{A. A. Taranenko}
\title{Transversals in Completely Reducible Multiary Quasigroups and in Multiary Quasigroups of Order 4}
\date{}

\maketitle

\begin{abstract}
An $n$-ary quasigroup $f$ of order $q$ is an $n$-ary operation over a set of cardinality $q$ such that the Cayley table of the operation is an $n$-dimensional latin hypercube of order $q$. A transversal in a quasigroup $f$ (or in the corresponding latin hypercube) is a collection of $q$ $(n+1)$-tuples from the Cayley table of $f$, each pair of tuples differing at each position. The problem of transversals in latin hypercubes was posed by Wanless in 2011.

An $n$-ary quasigroup $f$ is called reducible if it can be obtained as a composition of two quasigroups whose arity is at least 2, and it is completely reducible if it can be decomposed into binary quasigroups.

In this paper we investigate transversals in reducible quasigroups and in quasigroups of order 4. We find a lower bound on the number of transversals for a vast class of completely reducible quasigroups. Next we prove that, except for the iterated group $\mathbb{Z}_4$ of even arity, every $n$-ary quasigroup of order 4 has a transversal. Also we obtain a lower bound on the number of transversals in quasigroups of order 4 and odd arity and count transversals in the iterated group $\mathbb{Z}_4$ of odd arity and in the iterated group $\mathbb{Z}_2^2.$

All results of this paper can be regarded as those concerning latin hypercubes.
\end{abstract}

\section{Introduction}

Let $\Sigma_q$ be the set $\left\{0, \ldots, q-1\right\}.$ An \textit{$n$-ary quasigroup of order $q$} is a set $\Sigma_q^n$ with an $n$-ary operation $f: \Sigma_q^n \rightarrow \Sigma_q$ such that the equation $x_0 = f(x_1, \ldots, x_n)$ has a unique solution for any one variable if all the other $n$ variables are specified arbitrarily. We identify a quasigroup with its $n$-ary operation $f$. Unless otherwise stated, under a quasigroup we mean a multiary quasigroup. 

A 1-ary quasigroup of order $q$ is a bijection of the set $\Sigma_q$ to itself, i.e. a permutation from the symmetric group $S_q$. A 2-ary quasigroup (a \textit{binary} quasigroup) is a binary operation $*$ over a set $\Sigma_q$ with the following property: for each $a_0, a_1, a_2 \in \Sigma_q$ there exist unique $x_1, x_2 \in \Sigma_q$ such that both $a_0 = a_1 * x_2$ and $a_0 = x_1 * a_2$ hold. The multiplication table of every binary quasigroup of order $q$ is a latin square of order $q$ that is a $q \times q$ table filled by $q$ symbols so that each row and each column contain all different symbols.

In general, if we define an $n$-dimensional latin hypercube of order $q$ as an $n$-dimensional array filled by $q$ symbols so that each line of the hypercube contains distinct symbols, then the multiplication table of an $n$-ary quasigroup of order $q$ is an $n$-dimensional latin hypercube of the same order and vice versa. The formal definition of a latin hypercube will be given later.

A \textit{transversal} in an $n$-ary quasigroup $f$ of order $q$ is a set $\left\{\alpha^i\right\}_{i=1}^{q}$ of $(n+1)$-vectors $\alpha^i = (a_0^i, a_1^i, \ldots, a_n^i),~ a_k^i \in \Sigma_q$ such that $a^i_0 = f (a_1^i, \ldots, a_n^i) $ for all $i \in \left\{1, \ldots, q \right\}$ and $a^i_k \neq a^j_k$ for all $i \neq j$ and $k \in \left\{0, \ldots, n \right\}$. In other words, a quasigroup $f$ has a transversal if there exist permutations $\tau_1, \ldots, \tau_n$ from the symmetric group $S_q$ such that $a = f (\tau_1(a), \ldots, \tau_n(a))$ for all $a \in \Sigma_q.$  We denote by $T(f)$ the number of transversals in a quasigroup $f$.

For $n$-dimensional latin hypercubes of order $q$ we can also define a transversal as a set of $q$ entries containing all distinct symbols and being at the Hamming distance $n$ from each other. There exists a natural bijection between transversals in a quasigroup and in the corresponding latin hypercube, so all problems on transversals in latin hypercubes can be reformulated for quasigroups and vice versa.

Transversals in latin squares were studied in a number of papers. Counting and estimating their number are considered to be rather hard problems. Only in last few years several tight estimates on the number of transversals in large latin squares were proved.   

The problem of an upper bound on the number of transversals in latin squares of order $q$ was posed by Wanless in Loops'03 and soon afterwards the first non-trivial asymptotic bound was proved in~\cite{mckey}. The bound was improved up to $\left((1+o(1))\frac{q}{e^2}\right)^q $ in~\cite{mytrans}, and in~\cite{glebov} it was reproved by another technique. Moreover, in the latter paper it was proposed a probabilistic construction of latin squares that confirmed the exactness of this bound. Also, in~\cite{exactasym} it is found an asymptotic behavior of the number of transversals in the Cayley table of the group $\mathbb{Z}_q$ for odd $q$, which happens to be asymptotically equal to the maximal number of transversals in any latin square.

The existence problem of transversals in latin squares and lower bounds on their numbers are still very far from finalization.  The Ryser's conjecture claiming that every latin square of odd order has a transversal seems to be the most famous and longstanding problem for latin squares. The construction of many latin squares of even order with no transversals was proposed in~\cite{cavwan}.

Our knowledge about transversals in latin hypercubes is even poorer. The trivial upper bound on the number of transversals in $n$-dimensional latin hypercubes of order $q$ is $q!^{n-1}$ that is equal to the number of sets of $q$ $n$-vectors $\left\{(a_1^i, \ldots, a_n^i)\right\}_{i=1}^q$ which do not share a coordinate.  As for latin hypercubes, papers~\cite{glebov} and~\cite{mytrans} bound the number their transversals by $\left((1+o(1))\frac{q^{n-1}}{e^n}\right)^q $ for large $q$. 

In 2011 having analyzed a large amount of computational data, Wanless generalized the Ryser's conjecture and proposed
\begin{con}[\cite{wanless}] \label{hypwan}
Every latin hypercube of odd dimension or odd order has a transversal.
\end{con}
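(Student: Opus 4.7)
\vspace{1em}
\noindent\textbf{Proof proposal.} I would split Conjecture~\ref{hypwan} into the two overlapping cases of odd order and odd dimension, and attack each by a reduction on the arity~$n$.

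For the odd-order case (the natural extension of Ryser's conjecture), the first step is the completely reducible subcase: if $f$ is a composition of binary quasigroups, one should be able to build a transversal of $f$ from transversals of the factors, since each factor has odd order and Ryser's conjecture supplies a transversal of each; the remaining task is a combinatorial gluing along the decomposition tree. The second, harder, step is the irreducible subcase, where I would fix $n-2$ of the arguments of $f$ to obtain a latin square $g$ and apply Ryser's conjecture on $g$. The challenge is then to choose the fixed values and transversals of the resulting squares compatibly, and I would attempt this by a probabilistic / Hall-type argument on an auxiliary bipartite hypergraph whose sides are $(n-2)$-tuples of fixed values and candidate square-transversals, respectively.

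For the odd-dimension case (which is new already at $n=3$ and has no Ryser-style binary base), my approach would be to seek an algebraic obstruction forcing $T(f)>0$ whenever $n$ is odd. Following the spirit of Alon's permanent identity for the group $\mathbb{Z}_q$ with $q$ odd, one would define a signed transversal count in which the factor $(-1)^n$ prevents cancellation in odd dimension. Such a non-vanishing invariant should be isotopy-invariant, so the problem would reduce to the quasigroups on which the invariant vanishes, and one would then hope to prove that these are always reducible and hence handled by the first part of the argument.

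The main obstacle is clearly the irreducible subcase in both branches. In the odd-order branch the obstacle is Ryser's conjecture itself, so a realistic first milestone would be a conditional reduction of Conjecture~\ref{hypwan} to Ryser, carried out by the gluing procedure sketched above. In the odd-dimension branch the obstacle is that no analogue of Alon's identity is currently known for arity $\geq 3$; identifying the right algebraic invariant, or some genuinely combinatorial substitute such as a matroid/flow obstruction sensitive to the parity of $n$, is likely the decisive new ingredient that any complete proof will require.
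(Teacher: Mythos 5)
The statement you are trying to prove is Conjecture~\ref{hypwan}; it is an open conjecture attributed to Wanless, and the paper offers no proof of it --- only partial supporting results. Your text is likewise not a proof but a research programme, and you say so yourself: in the odd-order branch you explicitly invoke Ryser's conjecture (itself open) for the binary base case and for the squares obtained by fixing $n-2$ arguments, and in the odd-dimension branch you concede that no analogue of Alon's permanent identity is known for arity at least $3$. Each of these is a genuine, named gap, not a step that merely needs routine filling in; the ``combinatorial gluing'' and the ``Hall-type argument on an auxiliary bipartite hypergraph'' are also left entirely unspecified, and the latter in particular has no evident reason to succeed since the transversals of the squares $g$ obtained for different fixed tuples need not be compatible in any controlled way.

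Where your sketch does make contact with the paper, it is with the paper's partial results rather than with a proof of the conjecture. Your completely reducible subcase is essentially the content of Lemmas~\ref{quasi1} and~\ref{quasi2} and Theorem~\ref{compred}: transversals of the factors (or of their level quasigroups) glue to transversals of the composition. Note, however, that even there the paper's unconditional statement is only for odd arity $n$ (where the $1$-ary base case is trivial and no Ryser-type input is needed); for even $n$ the paper must \emph{assume} that an external binary quasigroup of a proper representation has a transversal, which for odd order is again Ryser's conjecture. The paper's unconditional existence results are confined to order $4$ (Theorems~\ref{oddn} and~\ref{harzero}), obtained via the Krotov--Potapov classification into reducible and semilinear quasigroups --- a structural tool with no analogue for general $q$, and one your proposal does not engage with. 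So the honest assessment is: your plan correctly identifies the reducible case as tractable and the irreducible case as the obstruction, but it does not close, and could not be expected to close, the conjecture.
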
 
It is well known that if $n$ and $q$ are both even then the $n$-ary iterated group $\mathbb{Z}_q$ has no transversals (for a proof see, for example,~\cite{obz} or~\cite{wanless}). The existence of transversals in the $n$-ary iterated group $\mathbb{Z}_q$ for odd $n$ was shown for the first time in~\cite{sun}.

The numbers of transversals in all latin hypercubes of orders 2 and 3 are found in~\cite{obz}. The reader is also referred to this paper for some additional results about transversals in latin hypercubes. Since the number of non-equivalent latin hypercubes of order 4 grows rapidly with dimension~\cite{numb4}, order 4 is the first order for which it is hard to  describe all possible numbers of transversals in hypercubes of a given dimension.  

The main aims of this paper are to show the existence of transversals in a majority of $n$-ary completely reducible quasigroups of order $q$ and to prove that among all quasigroups of order 4 only the iterated group $\mathbb{Z}_4$ of even arity has no transversals. In addition, we count the number of transversals in the iterated group $\mathbb{Z}_4$ of odd arity and in the iterated group $\mathbb{Z}^2_2$.  Since the same is true for latin hypercubes, we have a new support for Conjecture~\ref{hypwan}.

\section{Definitions and preliminary results}

Let $n,q \in \mathbb N$, and let $I_q^n= \left\{ (\alpha_1, \ldots , \alpha_n):\alpha_i \in \Sigma_q\right\}$.
An \textit{$n$-dimensional matrix $A$ of order $q$} is an array $(a_\alpha)_{\alpha \in I^n_q}$, $a_\alpha \in\mathbb R$.

Let $m\in \left\{0,\ldots,n\right\}$. An \textit{$m$-dimensional plane} in $A$ is the submatrix of $A$ obtained by fixing $n-m$ indices and letting the other $m$ indices vary from 1 to $n$. The \textit{direction} of a plane is the (0,1)-vector describing which indices are fixed in the plane. An $(n-1)$-dimensional plane is said to be a \textit{hyperplane}, and a $1$-dimensional plane is called a \textit{line}. An $m$-dimensional plane and an $(n-m)$-dimensional plane are \textit{orthogonal} if their directions are orthogonal.

An \textit{$n$-dimensional latin hypercube $Q$ of order $q$} is an $n$-dimensional array of the same order filled by $q$ symbols so that in each line all symbols are different. Let us denote by $Q(f)$ the latin hypercube that is the Cayley table of the quasigroup $f$.

An $n$-ary quasigroup $f$ of order $q$ can be specified by its \textit{graph} 
$$F = \left\{(x_0, x_1, \ldots, x_n) |~ x_0 =f(x_1, \ldots, x_n) \right\}.$$
The set $F$ has a cardinality $q^n$ and consists of entries of the $(n+1)$-dimensional matrix of order $q$ such that the minimal Hamming distance between different entries equals 2. Such subsets of the hypercube are known as \textit{$q$-ary MDS codes of length $n+1$ and with distance $2$}. On the other hand, every such MDS code can be considered as the graph of some quasigroup, so there exists the one-to-one correspondence between $n$-ary quasigroups and MDS codes of length $n+1$. 

An \textit{isotopy} is a collection of $n+1$ permutations $\sigma_i \in S_q, ~ i= 0, \ldots, n$. $n$-ary quasigroups $f$ and $g$ of the same order are called \textit{isotopic}, if for some isotopy $(\sigma_0, \sigma_1, \ldots , \sigma_n)$ we have $$f(x_1, \ldots, x_n) \equiv \sigma^{-1}_0 \left( g(\sigma_1(x_1), \ldots, \sigma_n(x_n)) \right).$$ 

Two $n$-ary quasigroups $f$ and $g$ are said to be \textit{parastrophic} if there exists a permutation $\pi \in S_{n+1}$ such that
$$x_0 = f(x_1, \ldots, x_n) \Leftrightarrow x_{\pi(0)} = g (x_{\pi(1)}, \ldots, x_{\pi(n)}).$$
The permutation $\pi$ is called a \textit{parastrophe}.

The latin hypercubes corresponding to isotopic (parastrophic) quasigroups $f$ and $g$ are also said to be isotopic (parastrophic). If quasigroups $f$ and $g$ can be turned into each other by application of some isotopies and parastrophes, then the latin hypercubes $Q(f)$ and $Q(g)$ belong to the same main class.

It is easy to see that if $\left\{(a_0^i, a_1^i, \ldots, a_n^i)\right\}_{i=1}^q$ is a transversal in a quasigroup $f$, $(\sigma_0, \sigma_1, \ldots , \sigma_n)$ is an isotopy from $f$ to a quasigroup $g$, and $\pi$ is parastrophe from $f$ to another quasigroup $h$, then the sets 
$$\left\{(\sigma_0(a_0^i), \sigma_1(a_1^i), \ldots, \sigma_n(a_n^i)) \right\}_{i=1}^q \mbox{ and }\left\{(a_{\pi(0)}^i, a_{\pi(1)}^i, \ldots, a_{\pi(n)}^i) \right\}_{i=1}^q$$
 are transversals in the quasigroups $g$ and $h$ respectively. So numbers of transversals in isotopic and parastrophic quasigroups are the same. 

An $n$-ary quasigroup $f$ of order $q$ is a \textit{composition} of an $(n-m)$-ary quasigroup $h$ and an $(m+1)$-ary quasigroup $g$  if there exists a permutation $\sigma \in S_{n+1}$ such that for all $x_1, \ldots, x_n \in \Sigma_q$
$$f(x_1, \ldots , x_n) = x_0  \Leftrightarrow  g(x_{\sigma(0)}, x_{\sigma(1)}, \ldots , x_{\sigma(m)}) = h(x_{\sigma(m+1)}, \ldots , x_{\sigma(n)}).$$
It means that there exists a direction of $m$-dimensional planes in the latin hypercube $Q(f)$ such that all planes of this direction are the Cayley tables of the quasigroups $g(a, x_{\sigma(1)}, \ldots , x_{\sigma(m)}) = y_0$ for $a \in \Sigma_q$ and in some orthogonal $(n-m)$-dimensional plane we have the latin hypercube $Q(h)$.

A quasigroup $f$ is \textit{permutably reducible} if it is a composition of two quasigroups, each of them having arity at least 2. Further we waive the word "permutably". 
 
An $n$-ary quasigroup $f$ with $n \geq 3$ is \textit{completely reducible} if it is a composition of completely reducible quasigroups $h_1$ and $h_2$ having arity at least 2. Meanwhile all binary quasigroups are also considered to be completely reducible. 

One of the simplest examples of completely reducible quasigroups is the  \textit{$n$-ary iterated group $G$}:
$$f(x_1, \ldots, x_n) = x_0 \Leftrightarrow x_0 + x_1 + \ldots + x_n = 0,~ x_i \in G, $$
where $+$ means an operation of $G$.

For future convenience we need the following property of completely reducible quasigroups.

\begin{lemma}\label{pred}
Let $f$ be an $n$-ary completely reducible quasigroup of order $q$ that defines an MDS code $F$.  Then for some permutation $\pi \in S_{n+1}$ there exists the quasigroup $g$ corresponding to the same MDS code $F$ and defined by the equation
$$h_1(x_{\pi(0)}, x_{\pi(1)}, \ldots, x_{\pi(n-2)}) = h_2 ( x_{\pi(n-1)}, x_{\pi (n)}),$$
where $h_1$ is a completely reducible $(n-1)$-ary quasigroup of order $q$ and $h_2$ is a binary quasigroup of order $q$.
\end{lemma}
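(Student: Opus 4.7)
The plan is to prove this by induction on $n$.

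For the base case $n = 3$, the claim is immediate from the definition of complete reducibility: $f$'s MDS code is described by $g(\ldots) = h(\ldots)$ for completely reducible $g, h$ of arities summing to $4$ and each $\geq 2$, hence both binary, and this equation is already of the required form.

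For the inductive step $n \geq 4$, the plan is to unfold the recursion implicit in complete reducibility into an underlying unrooted binary tree $T_f$: its $n+1$ leaves are labeled by the variables $x_0, \ldots, x_n$, its $n - 1$ internal nodes (all of degree $3$) carry binary quasigroup operations, and one of its edges is distinguished as the \emph{equality edge}. The MDS code of $f$ is then the set of tuples for which the values propagated from the leaves upward through the binary operations on the two sides of the equality edge agree. I will use two facts about $T_f$. First, since $T_f$ has at least $5$ leaves, it contains a cherry, that is, two leaves sharing a common parent $p$; the binary operation at $p$ acts on two variables which I label $x_{\pi(n-1)}, x_{\pi(n)}$ and call $h_2$. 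Second, changing which edge of $T_f$ is designated as the equality edge does not change the MDS code. Granting these, I relocate the equality edge to be the unique edge from $p$ into the rest of $T_f$: one side of the new equality edge is just the cherry under $p$, contributing the binary value $h_2(x_{\pi(n-1)}, x_{\pi(n)})$, while the other side is a subtree on the remaining $n - 1$ variables built from binary operations, which by the recursive construction defines a completely reducible $(n-1)$-ary quasigroup $h_1$. The resulting equation $h_1(x_{\pi(0)}, \ldots, x_{\pi(n-2)}) = h_2(x_{\pi(n-1)}, x_{\pi(n)})$ is the required description of $f$'s MDS code.

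The main step that requires care is the second fact above --- that the MDS code is independent of which edge of $T_f$ is designated as the equality edge. By induction along the path of edges between any two candidate equality edges, this reduces to the local case of two edges sharing a common internal node $v$ with binary operation $h_v$: the two descriptions differ by a parastrophe of $h_v$ and therefore determine the same set of tuples, using that parastrophic quasigroups share the same MDS code as noted in the preliminaries. The combinatorial existence of a cherry in any full binary tree with at least four leaves is standard, and the observation that the ``other side'' remains a completely reducible quasigroup follows immediately from the definition applied in reverse.
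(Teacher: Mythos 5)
Your proposal is correct and follows essentially the same route as the paper: both represent the completely reducible quasigroup by an unrooted binary tree with $n+1$ labelled leaves and $n-1$ inner vertices of degree $3$, locate an inner vertex adjacent to two leaves (a cherry), and read off $h_2$ from that vertex and $h_1$ from the rest of the tree. The only difference is one of detail: the paper delegates the well-definedness of the tree representation (including the point that the designated ``equality edge'' is immaterial) to the cited reference, whereas you spell out the parastrophe argument explicitly.
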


In other words, if $f$ is an $n$-ary completely reducible quasigroup, then there exists a direction of lines in the latin hypercube $Q(f)$ such that there are exactly $n$ different lines of this direction (given by the quasigroup $h_2$) and all orthogonal hyperplanes are the Cayley tables of isotopic completely reducible quasigroups generated by the quasigroup $h_1$.  

We will say that the quasigroup $g$ defined in Lemma~\ref{pred} is a \textit{proper representation} of a quasigroup $f$, and the quasigroup $h_2$ is the \textit{external} quasigroup for the proper representation $g$. Note that a completely reducible $n$-ary quasigroup  may have several proper representations. A proof of this lemma uses techniques of the quasigroup theory and can be found in Appendix.

Let us consider quasigroups of order 4 now. Throughout  the paper we widely use the functions $l : \left\{0,1,2,3\right\} \rightarrow \mathbb{Z}_2$ and $\nu : \left\{0,1,2,3\right\} \rightarrow \left\{0,1,2,3\right\}$ (or $\nu : \mathbb{Z}_2 \rightarrow \mathbb{Z}_2$ depending the context) that are defined as
$$l(0)=l(1)=0,~ l(2)=l(3)=1;$$
$$\nu(0) = 1,~ \nu(1) =0, ~ \nu(2) = 3,~ \nu(3) =2.$$
These functions always act coordinate-wise to vectors.

To separate a special class of quasigroups of order 4 we need the concept of a Boolean function. A function $\lambda: \mathbb{Z}_2^n \rightarrow \mathbb{Z}_2$ is said to be a  \textit{Boolean function}, and the set $\mathbb{Z}_2^n$ is known as the \textit{$n$-dimensional Boolean hypercube}.  For a Boolean vector $z \in \mathbb{Z}_2^n,~ z = (z_1, \ldots, z_n)$ the \textit{weight} of $z$ is
$$w(z) = z_1 + \ldots + z_n.$$

We say that an $n$-ary quasigroup $f$ of order 4 is \textit{standardly semilinear} if there exists a Boolean function $\lambda: \mathbb{Z}_2^n \rightarrow \mathbb{Z}_2$ such that
\begin{gather*}
x_0 = f(x_1, \ldots, x_n) \Leftrightarrow \\ \Leftrightarrow l(x_0) \oplus \ldots \oplus l(x_n) = 0 \mbox{ and } x_0 \oplus \ldots \oplus x_n \oplus \lambda(l(x_1), \ldots, l(x_n)) = 0,
\end{gather*}
where $\oplus$ means modulo 2 addition. Note that the function $\lambda$ uniquely determines the standardly semilinear quasigroup $f$ and vice versa.

A quasigroup $f$ is called \textit{semilinear} if it is isotopic to some standardly semilinear quasigroup.
Standard semilinearity of an $n$-ary quasigroup $f$ means that the corresponding $n$-dimensional latin hypercube $Q(f)$ is a disjoint union of $2^n$ latin subhypercubes of order $2$ (in case $n=2$ latin subsquares of order 2 are known as intercalates~\cite{wanless}). Position occupied by each subhypercube we call \textit{block}. For all standardly semilinear quasigroups $f$ the same blocks in $Q(f)$ are filled by the same symbols. The orientation function $\lambda$ specifies one of two possible latin hypercubes for each block. For convenience, we will say that a latin hypercube $Q(f)$ of a standardly semilinear quasigroup $f$ is \textit{intercalated}.

The following characterization of $n$-ary quasigroups of order 4 was obtained in~\cite{krpt}:

\begin{teorema} \label{krpt}
Every $n$-ary quasigroup of order $4$ is reducible or semilinear.
\end{teorema}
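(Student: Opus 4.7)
The plan is to argue by induction on the arity $n$, using the structural dichotomy between reducibility and semilinearity.

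For the base case $n=2$, I would invoke the classification of latin squares of order $4$: up to isotopy, there are only a few such squares, each of which can be realized as the Cayley table of either the cyclic group $\mathbb{Z}_4$ or the Klein group $\mathbb{Z}_2^2$. Both are readily seen to be standardly semilinear with an appropriate choice of the Boolean function $\lambda:\mathbb{Z}_2^2\to\mathbb{Z}_2$ (constant for $\mathbb{Z}_2^2$, and a suitable non-constant choice for $\mathbb{Z}_4$, after first rearranging symbols so that the pairs $\{0,1\}$ and $\{2,3\}$ are the two $\mathbb{Z}_2$-cosets). So every binary quasigroup of order $4$ is semilinear.

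For the inductive step ($n\ge 3$), fix an $n$-ary quasigroup $f$ of order $4$ and look at its $(n-1)$-ary retracts obtained by fixing one coordinate to each value in $\Sigma_4$. By the inductive hypothesis, each retract is reducible or semilinear. The main case analysis is: (i) if in some coordinate direction \emph{all} four retracts admit a common nontrivial reducible decomposition whose ``external'' part is compatible across the four slices, then this compatibility can be assembled into a composition of $f$ itself, showing $f$ is reducible; (ii) otherwise, every retract in every direction must be semilinear, and the problem reduces to showing that a quasigroup all of whose hyperplanes are intercalated is itself intercalated. For this, one chooses a pair of coordinate directions, considers the $2$-dimensional subsquares obtained by fixing $n-2$ coordinates, and uses the fact that each such subsquare is a latin square of order $4$ whose structure (cyclic vs.\ Klein) is inherited from the semilinear structure of the ambient hyperplanes. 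By tracking how the $2$-element blocks align across directions, one can after an overall isotopy arrange that the symbol pairs $\{0,1\}$ and $\{2,3\}$ are preserved in every coordinate, which yields the required decomposition of the Cayley table into $2^n$ subhypercubes of order $2$ and hence standard semilinearity.

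The main obstacle is the compatibility step. On the reducibility side it is easy to assemble a composition once the decompositions of the four retracts literally agree, but one has to rule out the awkward intermediate situation where each retract is reducible yet no common decomposition exists across the four slices; this must be pushed into the semilinear case, and requires showing that such disagreement forces intercalate structure on all binary subsquares. On the semilinear side, the challenge is global coherence: knowing that each hyperplane is (after some isotopy) intercalated does not immediately give a single isotopy of $f$ that intercalates all hyperplanes simultaneously. One must exploit the rigidity of order $4$, namely the fact that any two intercalate partitions of a latin square of order $4$ into $2\times 2$ blocks are related by a very restricted group of symmetries, in order to patch the local isotopies of hyperplanes into a global one and thereby exhibit $f$ as isotopic to a standardly semilinear quasigroup.
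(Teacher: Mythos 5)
You should first be aware that the paper does not prove this statement at all: Theorem~\ref{krpt} is quoted verbatim from the Krotov--Potapov paper cited as~\cite{krpt}, where its proof occupies essentially the whole of that article. So there is no in-paper proof to compare against; the only question is whether your outline would stand on its own.

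It would not, as it stands. Your base case is fine (there are exactly two isotopy classes of latin squares of order $4$, both group tables, both intercalated after relabelling so that $\{0,1\}$ and $\{2,3\}$ are the cosets of the order-$2$ subgroup). But the two ``main obstacles'' you name in the inductive step are not side issues to be smoothed over --- they \emph{are} the theorem, and your sketch does not overcome either of them. Concretely: (a) your case split (i)/(ii) is not exhaustive. The inductive hypothesis only tells you that each $(n-1)$-ary retract is reducible \emph{or} semilinear; you must handle the mixed situation where some retracts in a given direction are reducible, others semilinear, and the reducible ones carry decompositions that do not agree across the four slices. You assert this ``must be pushed into the semilinear case'' and that the disagreement ``forces intercalate structure on all binary subsquares,'' but no mechanism is given, and this implication is exactly where the Krotov--Potapov argument does its heavy lifting (via a detailed analysis of how decompositions of retracts of an order-$4$ quasigroup can interact). (b) Even granting that every retract in every direction is semilinear, the passage to global semilinearity of $f$ is not a routine patching argument: the isotopies witnessing semilinearity of different hyperplanes act on overlapping sets of coordinates, and ``rigidity of intercalate partitions of a $4\times 4$ latin square'' by itself does not produce a single isotopy of $f$ making all $2^n$ blocks simultaneously into order-$2$ subhypercubes --- one must also verify the parity condition $l(x_0)\oplus\cdots\oplus l(x_n)=0$ and the existence of a consistent orientation function $\lambda$ on the whole Boolean hypercube. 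Both steps require substantive new arguments specific to order $4$; as written, the proposal restates the difficulty of the theorem rather than resolving it.
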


Let us introduce two important examples of quasigroups of order 4.
An $n$-ary  quasigroup $f$ of order 4 is called \textit{$\mathbb{Z}_4$-linear} (\textit{$\mathbb{Z}_2^2$-linear}) if it is isotopic to the iterated group $\mathbb{Z}_4$ (to the iterated group $\mathbb{Z}_2^2$). In the following two lemmas we list main properties of $\mathbb{Z}_4$-linear and $\mathbb{Z}_2^2$-linear quasigroups. Their proofs are given in Appendix.

\begin{lemma} \label{z4char}
Suppose $f$ is an $n$-ary $\mathbb{Z}_4$-linear quasigroup.
\begin{enumerate}
\item $f$ is a completely reducible quasigroup.
\item $f$ is semilinear and is isotopic to the standardly semilinear quasigroup $h_{\mathbb{Z}_4}$ defined by the Boolean function $\lambda_{\mathbb{Z}_4}$ such that $\lambda_{\mathbb{Z}_4}(z) =0$ if weight of $z$ equals $0$ or $3$ by modulo $4$ and $\lambda_{\mathbb{Z}_4}(z) =1$ if weight of $z$ equals $1$ or $2$ by modulo $4$.
\item A standardly semilinear quasigroup defined by a Boolean function $\lambda$ is $\mathbb{Z}_4$-linear if and only if for every $2$-dimensional plane $P$ in the $n$-dimensional Boolean hypercube it holds $\sum\limits_{z \in P} \lambda(z) \equiv 1 \mod 2.$ 
\end{enumerate}
\end{lemma}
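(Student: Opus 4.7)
For item 1, the iterated group $\mathbb{Z}_4$ has the explicit expression $x_0 = ((\ldots(x_1+x_2)+x_3)+\cdots)+x_n$ as a cascade of $n-1$ binary copies of $\mathbb{Z}_4$, so it is completely reducible by definition; complete reducibility is easily seen to be preserved by isotopy (push the component permutations into each binary factor). Hence every $\mathbb{Z}_4$-linear quasigroup is completely reducible.

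For item 2, I build the required isotopy coordinate-wise. Writing $x\in\mathbb{Z}_4$ in binary as $(l(x),r(x))$ with $r(x)=x\bmod 2$, the equation $x_0 = x_1+\cdots+x_n\pmod 4$ becomes
\begin{gather*}
r(x_0)=r(x_1)\oplus\cdots\oplus r(x_n),\\
l(x_0)=l(x_1)\oplus\cdots\oplus l(x_n)\oplus c\bigl(r(x_1),\ldots,r(x_n)\bigr),
\end{gather*}
where $c(z_1,\ldots,z_n)=\lfloor(z_1+\cdots+z_n)/2\rfloor\bmod 2$ is the total carry from the low bits. Applying to every argument the permutation of $\{0,1,2,3\}$ that exchanges $1$ and $2$ (i.e.\ swaps the high and low bits in the identification $\mathbb{Z}_4 \leftrightarrow \mathbb{Z}_2\times\mathbb{Z}_2$) turns the two relations into the standardly semilinear system with orientation function $c$. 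A direct weight computation then shows that $c(z)$ and $\lambda_{\mathbb{Z}_4}(z)$ differ exactly by the affine Boolean function $z_1\oplus\cdots\oplus z_n$, so composing with one further coordinate permutation whose effect is to flip the low bit inside each upper block (i.e.\ the action of $\nu$ restricted to $\{2,3\}$) replaces $c$ by $c\oplus(z_1\oplus\cdots\oplus z_n)=\lambda_{\mathbb{Z}_4}$, producing the desired isotopy onto $h_{\mathbb{Z}_4}$.

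For item 3, the backbone is the observation that isotopies sending one standardly semilinear quasigroup to another modify the defining Boolean function $\lambda$ only by an affine transformation (shift of argument, XOR with a constant, and XOR with a linear form): the admissible coordinate permutations are precisely those preserving the partition $\{\{0,1\},\{2,3\}\}$, and the residual within-block swaps contribute an affine function of $l(x_1),\ldots,l(x_n)$. Granting this, the forward implication reduces to a direct verification that every $2$-dimensional subcube sum of $\lambda_{\mathbb{Z}_4}$ is $\equiv 1\pmod 2$ (using the weight pattern $0,1,1,0,0,1,1,0,\ldots$ and noting that such a sum always collapses to $\lambda_{\mathbb{Z}_4}(w)+\lambda_{\mathbb{Z}_4}(w\pm 2)$ or $\lambda_{\mathbb{Z}_4}(w-1)+\lambda_{\mathbb{Z}_4}(w+1)$), together with the fact that affine Boolean functions have zero sum on every 2-plane. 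For the converse, if $\lambda$ satisfies the 2-plane condition then $\lambda\oplus\lambda_{\mathbb{Z}_4}$ has every 2-plane sum $\equiv 0\pmod 2$, and by the classical characterisation of affine Boolean functions as those with vanishing second differences (the first-order Reed--Muller code) we conclude that $\lambda\oplus\lambda_{\mathbb{Z}_4}$ is affine, giving the required isotopy to $h_{\mathbb{Z}_4}$.

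The main obstacle is item 2: pinning down the precise sequence of coordinate permutations that realises the isotopy from iterated $\mathbb{Z}_4$ to $h_{\mathbb{Z}_4}$, and confirming the identification of the carry function $c$ with $\lambda_{\mathbb{Z}_4}$ up to an affine term. Once that identification is secured, item 3 is essentially formal, flowing from the vanishing-second-difference characterisation of affine functions and the affine-gauge description of isotopies between standardly semilinear quasigroups.
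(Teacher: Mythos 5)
Your argument is correct, and for parts 2 and 3 it takes a genuinely different route from the paper's. For part 2 the paper works with the convention $x_0+x_1+\cdots+x_n=0$ and shows by a direct counting argument (tracking the numbers $N_a$ of coordinates carrying each symbol $a$) that the single isotopy $(\sigma,\ldots,\sigma)$, $\sigma$ the transposition of $1$ and $2$, already yields the orientation function $\lambda_{\mathbb{Z}_4}$; because you start instead from $x_0=x_1+\cdots+x_n$, your bit-swap lands on the carry function $c$, which differs from $\lambda_{\mathbb{Z}_4}$ by the linear form $z_1\oplus\cdots\oplus z_n$, and your additional within-block swaps correctly absorb that difference. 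Both computations are right; the discrepancy is exactly the negation of the $x_0$ coordinate. For the sufficiency in part 3 the paper argues by induction on $n$: restrict to a hyperplane, apply the inductive hypothesis, and use the odd-2-plane condition to reconstruct $\lambda$ from its values on that hyperplane plus one further value, leaving only two candidates, each isotopic to $h_{\mathbb{Z}_4}$. You instead identify 2-plane sums with second differences, invoke the first-order Reed--Muller characterisation to conclude that $\lambda\oplus\lambda_{\mathbb{Z}_4}$ is affine, and realise any affine correction by explicit within-block swaps; this is non-inductive and arguably more transparent, and it simultaneously covers the analogous statement in Lemma~\ref{z22char}. Both treatments rest on the same asserted-but-not-fully-proved fact that any isotopy between standardly semilinear quasigroups is built from permutations preserving the partition $\left\{\left\{0,1\right\},\left\{2,3\right\}\right\}$ and hence changes $\lambda$ only by an affine gauge (the paper states this as the claim that such isotopies are generated by three listed permutations), so you are not worse off than the paper on that point.
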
 

\begin{lemma} \label{z22char}
Suppose $f$ is an $n$-ary $\mathbb{Z}_2^2$-linear quasigroup.
\begin{enumerate}
\item $f$ is a completely reducible quasigroup.
\item $f$ is  semilinear  and is isotopic to the standardly semilinear quasigroup $h_{\mathbb{Z}_2^2}$ defined by the Boolean function $\lambda_{\mathbb{Z}_2^2}$ such that $\lambda_{\mathbb{Z}_2^2}(z) =0$ for all $z$.
\item A standardly semilinear quasigroup defined by a Boolean function $\lambda$ is $\mathbb{Z}_2^2$-linear if and only if for every $2$-dimensional plane $P$ in the $n$-dimensional Boolean hypercube it holds $\sum\limits_{z \in P} \lambda(z) \equiv 0 \mod 2.$ 
\end{enumerate}
\end{lemma}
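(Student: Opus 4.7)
The plan is to prove (1), (2), and (3) in turn; (3) is the main content.

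For (1), the $n$-ary iterated group $\mathbb{Z}_2^2$ admits the decomposition $(x_0+x_1)+(x_2+\cdots+x_n)=0$, exhibiting it as a composition of a binary quasigroup with the $(n-1)$-ary iterated $\mathbb{Z}_2^2$; by induction on $n$ it is completely reducible. Complete reducibility is preserved by isotopy, since the component permutations can always be absorbed into the inner quasigroups of a fixed decomposition, so every $\mathbb{Z}_2^2$-linear quasigroup is completely reducible. For (2), identify $\mathbb{Z}_2^2$ with $\{0,1,2,3\}$ so that the group operation is bitwise XOR $\oplus$ and the high bit of $x$ equals $l(x)$. The relation $x_0+x_1+\cdots+x_n=0$ then splits into the high-bit equation $l(x_0)\oplus\cdots\oplus l(x_n)=0$ and the low-bit equation $\bigoplus_i(x_i\bmod 2)=0$, which together are precisely the definition of $h_{\mathbb{Z}_2^2}$ with $\lambda_{\mathbb{Z}_2^2}\equiv 0$.

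For the ($\Leftarrow$) direction of (3), I would carry out an explicit isotopy calculation. Parametrize each $\sigma_i$ in the block stabilizer $S_2\wr S_2\le S_4$ by a triple $(s_i,a_i,c_i)\in\mathbb{Z}_2^3$ encoding swap-of-blocks, within-block shift, and block-dependent within-block twist, and substitute $y_i=\sigma_i(x_i)$ into the two defining equations of standard semilinearity. A routine calculation shows that the isotope is again standardly semilinear exactly when $s_0\oplus\cdots\oplus s_n=0$, and in that case the new Boolean function is
$$\lambda'(z)=\lambda(z+s)\oplus A(z),$$
where $s=(s_1,\ldots,s_n)\in\mathbb{Z}_2^n$ and $A$ is an affine function whose constant term and linear coefficients range independently over $\mathbb{Z}_2$ and $\mathbb{Z}_2^n$ as the $a_i,c_i$ vary. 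Thus the orbit of $\lambda\equiv 0$ under block-preserving isotopies is exactly the space of affine Boolean functions on $\mathbb{Z}_2^n$, and any affine $\lambda$ can be reduced to $0$. A short Zhegalkin-polynomial computation shows $\lambda$ is affine iff $\sum_{z\in P}\lambda(z)\equiv 0\pmod 2$ for every $2$-plane $P\subset\mathbb{Z}_2^n$; combined with (2) this gives ($\Leftarrow$).

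For ($\Rightarrow$), note that every isotopy of $Q(f)$ sends the axis-aligned $2$-plane at positions $i,j$ (with the remaining coordinates fixed) to another such $2$-plane via an isotopy of the corresponding $4\times 4$ latin squares, so the isotopy class of each axis-aligned $2$-plane is an invariant. Every axis-aligned $2$-plane of the iterated $\mathbb{Z}_2^2$ is isotopic to the Cayley table of $\mathbb{Z}_2^2$, hence the same is true for any $\mathbb{Z}_2^2$-linear $f$. When $f$ is in addition standardly semilinear, the $2$-plane at positions $i,j$ with other coordinates fixed at $a_k$ is itself a standardly semilinear binary quasigroup whose Boolean function is $\lambda$ restricted to the $2$-plane $P\subset\mathbb{Z}_2^n$ at positions $i,j$ with remaining Boolean coordinates fixed at $l(a_k)$. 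This reduces matters to the base case $n=2$, where the claim is that a standardly semilinear $4\times 4$ latin square defined by $\lambda$ is isotopic to the Cayley table of $\mathbb{Z}_2^2$ iff $\sum_{z\in\mathbb{Z}_2^2}\lambda(z)\equiv 0\pmod 2$. I expect this base case to be the main obstacle: by the orbit calculation in the previous paragraph the block-preserving orbits of $h_{\mathbb{Z}_2^2}$ and of the iterated $\mathbb{Z}_4$ already split the $16$ Boolean functions on $\mathbb{Z}_2^2$ into two classes of $8$ distinguished precisely by the parity of $\sum\lambda$, and the classical classification of $4\times 4$ latin squares into the two main classes of $\mathbb{Z}_4$ and $\mathbb{Z}_2^2$ ensures that no general isotopy merges these two orbits.
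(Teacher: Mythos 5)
Your proposal is correct, and for clause 3 it takes a genuinely different route from the paper. Clauses 1 and 2 coincide with the paper's argument (the paper likewise notes that the iterated group is completely reducible and that isotopy preserves this, and identifies $\mathbb{Z}_2^2$ with pairs $(l(a),a\bmod 2)$ so that the group relation splits into the two defining equations with $\lambda\equiv 0$). For clause 3 the paper argues ``analogically to'' its Lemma on $\mathbb{Z}_4$: necessity is obtained by asserting that every isotopy between standardly semilinear quasigroups has all components in the block stabilizer generated by $(0\,1)$, $(2\,3)$ and the block swap, and that such isotopies preserve the parity of $\sum_{z\in P}\lambda(z)$; sufficiency is proved by induction on $n$, restricting to $x_n=0$, applying the inductive isotopy, and using a rigidity argument that a function with prescribed $2$-plane parities is determined by its values on a hyperplane plus one further value. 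You instead prove sufficiency by an explicit orbit computation --- the block-preserving isotopes of $\lambda\equiv 0$ are exactly the affine Boolean functions, which are exactly the functions with even sum on every axis-aligned $2$-plane --- and prove necessity by restricting to axis-aligned $2$-planes of $Q(f)$, reducing to the base case $n=2$, where the two isotopy classes of order-$4$ latin squares separate the even-sum and odd-sum orientation functions. Your sufficiency argument is more explicit and avoids induction, and your necessity argument has the advantage of not relying on the (unproved in the paper) claim that an arbitrary isotopy between standardly semilinear quasigroups must be block-preserving; the cost is that you must invoke the classification of order-$4$ latin squares into the two isotopy classes of $\mathbb{Z}_4$ and $\mathbb{Z}_2^2$, and you should make sure to note, as you implicitly do, that a restricted $2$-plane with $\bigoplus_{k\neq i,j}l(a_k)=1$ becomes standardly semilinear only after a block swap on $x_0$, which does not affect the parity of the sum. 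Both routes are sound.
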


\section{Main results}

For most of the proofs in this papers, it is more convenient to use current quasigroup terminology instead to introduce equivalent one for latin hypercubes. So all results will also be given in terms of quasigroups. In this section we provide their equivalents for latin hypercubes.

The first result concerns the number of transversals in the Cayley tables of completely reducible quasigroups.

\begin{teorema} \label{compredlatin}
Let $Q(f)$ be an $n$-dimensional latin hypercube of order $q$ which is the Cayley table  of a completely reducible $n$-ary quasigroup $f$. 
\begin{enumerate}
\item If $n$ is odd then $Q(f)$ has at least $(q\cdot q!)^{\frac{n-1}{2}}$ transversals. 
\item If $n$ is even and there exists a direction of $2$-dimensional planes such that all latin squares of this direction have a transversal and are isotopic to the Cayley table of an external quasigroup for one of a proper representations of $f$, then $Q(f)$ has at least $(q\cdot q!)^{\left\lfloor  \frac{n-1}{2} \right\rfloor}$ transversals.
\end{enumerate}
\end{teorema}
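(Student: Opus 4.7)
The plan is to apply Lemma~\ref{pred} to $f$ (after a parastrophe if necessary; this preserves $T(f)$) to obtain a proper representation with defining equation $h_1(x_A)=h_2(x_B)$, where $|A|=n-1$, $h_1$ is an $(n-1)$-ary completely reducible quasigroup, and $h_2$ is binary. For each $c\in\Sigma_q$, set $L_1(c)=h_1^{-1}(c)$ of size $q^{n-2}$ and $L_2(c)=h_2^{-1}(c)$ of size $q$; the graph of $f$ then decomposes as the disjoint union $\bigsqcup_{c}L_1(c)\times L_2(c)$. I will prove Part~1 by induction on odd $n$ and deduce Part~2 from it.

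For Part~1 I count only those transversals of $f$ in which the common value $c^i := h_1(x_A^i) = h_2(x_B^i)$ is constant, say equal to $c$. The $B$-parts must then exhaust $L_2(c)$ (whose $q$ elements already have both coordinate projections equal to $\Sigma_q$), and the $A$-parts form a size-$q$ subset of $L_1(c)$ with every one of its $n-1$ coordinate projections a permutation. Solving $h_1(\cdot)=c$ for one coordinate exhibits $L_1(c)$ as the graph of an $(n-2)$-ary quasigroup $k_c$, and the admissible $A$-part sets are precisely the transversals of $k_c$. Together with the $q!$ bijective matchings of $A$-parts to $B$-parts this gives $q!\cdot T(k_c)$ transversals of $f$ for each $c$, pairwise distinct across different triples $(c, k_c\text{-transversal}, \text{matching})$ by projection. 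Since $k_c$ is itself completely reducible of odd arity $n-2$ (see below), the inductive hypothesis gives $T(k_c)\geq (q\cdot q!)^{(n-3)/2}$; summing over $c$ yields $T(f)\geq q\cdot q!\cdot (q\cdot q!)^{(n-3)/2}=(q\cdot q!)^{(n-1)/2}$, with the base case $n=1$ immediate.

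For Part~2 the hypothesis provides a proper representation whose external quasigroup $h_2$ has a transversal: every 2D plane in the specified direction is isotopic to $h_2$ (solving $h_1(x_A)=h_2(x_B)$ for $x_0$ with the other input coordinates fixed composes $h_2$ with a permutation on its output), so the existence of transversals in those planes entails one in $h_2$. Given any transversal $T_1$ of $h_1$ and any transversal $T_2$ of $h_2$, I pair their entries via the unique matching that identifies the common output value (both output sequences are permutations of $\Sigma_q$). The resulting $q$ tuples have every one of the $n+1$ coordinates a permutation, so they form a transversal of $f$, and distinct pairs $(T_1,T_2)$ give distinct transversals of $f$ (recoverable by projection onto $A$ and $B$). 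Hence $T(f)\geq T(h_1)\cdot T(h_2)$, and applying Part~1 to the $(n-1)$-ary odd-arity completely reducible quasigroup $h_1$ gives $T(h_1)\geq (q\cdot q!)^{(n-2)/2}$; together with $T(h_2)\geq 1$ this yields the bound $(q\cdot q!)^{\lfloor(n-1)/2\rfloor}$.

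The main technical hurdle is verifying that $k_c$ is completely reducible. Since $k_c$ arises from $h_1$ by a parastrophe exchanging the output with one of the inputs, followed by fixing that input to $c$, it suffices to check that each of these operations preserves complete reducibility. Parastrophe acts on the underlying MDS code by permuting directions and so respects any composition structure; fixing a coordinate restricts a binary-factor decomposition of $h_1$ to a decomposition of $k_c$ in which the factor containing that coordinate loses one argument. A brief structural induction on the depth of the decomposition of $h_1$ completes this verification.
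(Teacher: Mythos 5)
Your proposal is correct and follows essentially the same route as the paper: it applies Lemma~\ref{pred} to get the proper representation $h_1(x_A)=h_2(x_B)$, counts transversals with a constant intermediate value $c$ via the $q!$ matchings (this is exactly the paper's Lemma~\ref{quasi2} together with the factor $q$ from the choices of $c$), inducts on odd arity, and uses the product bound $T(f)\geq T(h_1)T(h_2)$ (Lemma~\ref{quasi1}) for the even case. Your explicit verification that the derived $(n-2)$-ary quasigroup $k_c$ remains completely reducible is a detail the paper leaves implicit, and it is handled correctly.
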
     

Next result is a lower bound on the numbers of transversals in latin hypercubes of odd dimension  being the Cayley tables of standardly semilinear quasigroups. 

\begin{teorema} \label{lowsemioddlatin}
Let $Q$ be an $n$-dimensional intercalated latin hypercube of order $4$ where $n$ is odd. Then $Q$ has at least $\frac{1}{3} (16^{n-1} + 2 \cdot 8^{n-1})$ transversals.
\end{teorema}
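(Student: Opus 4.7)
The plan is to split all transversals of $Q$ into two disjoint families according to which blocks of the intercalated structure they occupy, then to bound each from below. Writing $Q=Q(f)$ for a standardly semilinear $f$ with Boolean function $\lambda:\mathbb{Z}_2^n\to\mathbb{Z}_2$, the hypercube $Q$ is the disjoint union of $2^n$ order-$2$ Latin subhypercubes $B_\delta$ indexed by $\delta\in\mathbb{Z}_2^n$, where $B_\delta$ consists of the entries with $l(a_k)=\delta_k$ for $k=1,\ldots,n$. For any transversal $\{\alpha^1,\ldots,\alpha^4\}$, each entry $\alpha^i$ lies in some block $B_{\delta^i}$. The column-distinctness in the $n$ argument coordinates forces each column of the $4\times n$ matrix $(\delta^i_k)$ to sum to $2$; the distinctness in coordinate $0$, combined with $l(a_0)=w(\delta)\bmod 2$, forces exactly two of the four $\delta^i$ to have odd weight. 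A short case analysis then shows the only possibilities are \emph{Case B}, where $\{\delta^1,\ldots,\delta^4\}=\{u,u,\nu(u),\nu(u)\}$ as a multiset, and \emph{Case A}, where the four $\delta^i$ are distinct and form a balanced affine $2$-flat in $\mathbb{Z}_2^n$ with a $2{+}2$ split by weight parity.

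For Case~B I would use that an order-$2$ Latin hypercube of odd dimension $n$ has exactly $2^{n-1}$ transversals, namely the antipodal pairs $\{b,\nu(b)\}$ of internal cells (the parity of the zeroth coordinate flips across $b,\nu(b)$ because $n$ is odd). Combining an arbitrary block-transversal of $B_u$ with one of $B_{\nu(u)}$ produces a transversal of $Q$; distinctness across blocks is automatic since $u_k\ne\nu(u)_k$ for every $k$ and $w(u)\not\equiv w(\nu(u))\pmod 2$. Summing over the $2^{n-1}$ unordered antipodal pairs $\{u,\nu(u)\}$ yields exactly $2^{n-1}\cdot(2^{n-1})^2=8^{n-1}$ Case~B transversals.

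For Case~A I would retain only the $2$-flats of the form $F=\{x,\nu(x),y,\nu(y)\}$, each obtained as the union of two distinct parallel lines $\{x,\nu(x)\}$ and $\{y,\nu(y)\}$; every such $F$ is balanced and, for $n$ odd, contains two points of each weight parity. A degree count on the internal-cell constraints shows that $F$ supports exactly $2^{2n-1}$ transversals when $\lambda(x)\oplus\lambda(\nu(x))\oplus\lambda(y)\oplus\lambda(\nu(y))=0$, and none otherwise: the $n$ column conditions leave $4^n$ column-compatible configurations of the internal positions, and at coordinate $0$ the two parity-group conditions reduce to a single independent parity condition exactly when this $\lambda$-compatibility holds. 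Setting $g(L)=\lambda(x)\oplus\lambda(\nu(x))$ for $L=\{x,\nu(x)\}$, the flat $F$ corresponding to $\{L_1,L_2\}$ is good iff $g(L_1)=g(L_2)$, so the number of good such flats equals $\binom{N_0}{2}+\binom{N_1}{2}=\binom{2^{n-1}}{2}-N_0N_1$, where $N_i=|\{L:g(L)=i\}|$ and $N_0+N_1=2^{n-1}$.

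To finish, I would apply $N_0N_1\le(2^{n-2})^2$: for $n\ge 3$ a direct rearrangement yields $\binom{2^{n-1}}{2}-N_0N_1\ge \tfrac{1}{3}\binom{2^{n-1}}{2}$, and the case $n=1$ is trivial since both sides vanish. Case~A thus contributes at least $\tfrac{1}{3}\binom{2^{n-1}}{2}\cdot 2^{2n-1}=(16^{n-1}-8^{n-1})/3$ transversals, and adding the $8^{n-1}$ from Case~B yields the claimed bound $\tfrac{1}{3}(16^{n-1}+2\cdot 8^{n-1})$. The step I expect to be most delicate is the exact $2^{2n-1}$ count in Case~A: one must verify that, after elimination via the column constraints, the two parity-group conditions at coordinate $0$ sum to the $\lambda$-compatibility condition (using that $n+1$ is even), so that they impose a single independent linear constraint exactly when the flat is good.
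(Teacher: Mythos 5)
Your proof is correct and follows essentially the same route as the paper: your Case~B/Case~A split is exactly the paper's division of worthwhile quadruples into twin and brindled ones (Lemmas~\ref{worth}--\ref{brindtrans}), your per-configuration counts $8^{n-1}$ and $2^{2n-1}=2\cdot 4^{n-1}$ match Lemmas~\ref{twintrans} and~\ref{brindtrans}, and your restriction to flats of the form $\left\{x,\nu(x),y,\nu(y)\right\}$ is precisely the set $U$ used in Lemma~\ref{bool}. The only (harmless) divergence is in the very last step: where the paper extracts the ``at least one third'' fraction of good flats from linear relations among six quantities $\Lambda_i$, you obtain it from $\binom{N_0}{2}+\binom{N_1}{2}=\binom{2^{n-1}}{2}-N_0N_1$ together with $N_0N_1\le 4^{n-2}$ (valid since $2^{n-1}\ge 4$ for odd $n\ge 3$, with $n=1$ trivial), which is an equally correct and arguably cleaner computation.
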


For an arbitrary latin hypercube of order $4$ and odd dimension we prove a weaker lower bound on the number of transversals.

\begin{teorema} \label{oddnlatin}
Let $Q$ be an $n$-dimensional latin hypercube of order $4$ where $n$ is odd. Then $Q$ has at least $8^{n-1}$ transversals.
\end{teorema}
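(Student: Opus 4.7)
\emph{Proof plan.} We induct on odd $n$. For $n=1$ any permutation of $\Sigma_4$ is itself the unique transversal, giving $T(Q) \geq 1 = 8^{0}$; assume the statement for all smaller odd arities and fix odd $n \geq 3$. By Theorem~\ref{krpt}, the $n$-ary quasigroup $f$ underlying $Q$ is either semilinear or reducible, and since transversal counts are isotopy-invariant, in the semilinear case we may assume $f$ is standardly semilinear.

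If $f$ is semilinear, Theorem~\ref{lowsemioddlatin} yields
\[
T(Q) \geq \tfrac{1}{3}\bigl(16^{n-1}+2\cdot 8^{n-1}\bigr) \geq 8^{n-1},
\]
the last inequality holding because $16^{n-1} \geq 8^{n-1}$. If $f$ is completely reducible, Theorem~\ref{compredlatin}(1) with $q = 4$ gives
\[
T(Q) \geq (4\cdot 4!)^{(n-1)/2} = 96^{(n-1)/2} \geq 64^{(n-1)/2} = 8^{n-1}.
\]

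The remaining case is $f$ reducible but not completely reducible. Write $f = g \circ h$ with $g$ of arity $k$ and $h$ of arity $n+1-k$, both at least $2$; since $n+1$ is even, $k$ and $n+1-k$ share parity. The plan is to establish a composition-type lower bound---in its cleanest form $T(f) \geq T(g)\,T(h)$---and apply the inductive hypothesis to each factor. For odd--odd splits induction delivers $T(g) \geq 8^{k-1}$ and $T(h) \geq 8^{n-k}$, whose product is $8^{n-1}$. For even--even splits neither factor has odd arity, so each is further decomposed via Theorem~\ref{krpt} until odd-arity semilinear pieces (Theorem~\ref{lowsemioddlatin}) or completely reducible sub-pieces (Theorem~\ref{compredlatin}) surface, and the partial bounds are reassembled along the decomposition tree.

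The main technical obstacle is the composition-type bound itself, especially in situations where one factor can have very few (or zero) transversals in isolation---for instance a binary $\mathbb{Z}_4$ Cayley table sitting inside a larger $\mathbb{Z}_4$-linear or mixed $f$. One must then refine the naive product by summing over the common inner output $\varphi(a) = g(a, \tau_{\sigma(1)}(a), \ldots, \tau_{\sigma(m)}(a)) = h(\tau_{\sigma(m+1)}(a), \ldots, \tau_{\sigma(n)}(a))$, and show that, averaged over $\varphi \in S_4$, the two sides still contribute enough that the product accumulates to $8^{n-1}$. Keeping the arity parities aligned in the even--even sub-case is the secondary bookkeeping issue that makes the induction close.
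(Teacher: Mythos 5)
Your overall strategy (induction on odd $n$, the reducible-or-semilinear dichotomy of Theorem~\ref{krpt}, and composition bounds) matches the paper's, and the semilinear and odd--odd cases are correct as you state them. The genuine gap is the even--even case of a reducible $f$, which you yourself flag as ``the main technical obstacle'' and leave open. Your first idea there --- keep decomposing the even-arity factors until odd-arity pieces surface and reassemble product bounds along the decomposition tree --- cannot work: an even-arity factor can be semilinear and $\mathbb{Z}_4$-linear, hence have \emph{no} transversals at all, so any reassembly built on $T(f)\ge T(g)T(h)$ collapses to zero. Your second idea (sum over the common inner output) is the right one but is not executed, and in the form you sketch it (allowing the inner output to be an arbitrary permutation $\varphi$ and averaging over $\varphi\in S_4$) it is harder than necessary.

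The paper closes this case with Lemma~\ref{quasi2}: fix a constant inner value $a\in\Sigma_4$ and consider the retracts $g^a$ and $h^a$ defined by $g(x_{\sigma(0)},\dots,x_{\sigma(m)})=a$ and $h(x_{\sigma(m+1)},\dots,x_{\sigma(n)})=a$. In the even--even case these have odd arities $m$ and $n-m-1$, so the inductive hypothesis applies to them (not to $g$ and $h$ themselves) and gives $T(g^a)\ge 8^{m-1}$ and $T(h^a)\ge 8^{n-m-2}$; each pair of retract transversals combines with each of the $4!$ ways of matching their indices into a distinct transversal of $f$, and transversals arising from distinct values of $a$ are distinct. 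Hence $T(f)\ge 4\cdot 4!\cdot 8^{m-1}\cdot 8^{n-m-2}=96\cdot 8^{n-3}>8^{n-1}$. With this lemma the even--even case is a three-line computation; without it, or an executed substitute, your induction does not close. Two minor remarks: your separate completely reducible case is harmless but redundant (it is subsumed by the reducible case), and the semilinear case needs only the twin-quadruple count (Corollary~\ref{semiodd}), not the stronger Theorem~\ref{lowsemioddlatin}.
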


Also we obtain a characterization of latin hypercubes of order 4 with no transversals.

\begin{teorema} \label{harzerolatin}
Let $Q$ be an $n$-dimensional latin hypercube of order $4$ without transversals. Then $n$ is even and $Q$ is isotopic to the Cayley table of the iterated group $\mathbb{Z}_4$.
\end{teorema}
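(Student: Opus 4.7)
The plan is to prove the two parts of the conclusion in sequence. First I rule out odd $n$: if $n$ were odd, Theorem~\ref{oddnlatin} would yield at least $8^{n-1}\geq 1$ transversals in $Q$, contradicting the hypothesis; hence $n$ must be even.

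For even $n$, writing $Q=Q(f)$, I would argue by induction on $n$ that $Q$ is isotopic to the Cayley table of the iterated group $\mathbb{Z}_4$. The base case $n=2$ reduces to the classical fact that order-$4$ latin squares fall into exactly two main classes, $\mathbb{Z}_4$ and $\mathbb{Z}_2^2$, only the first of which is transversal-free (a direct check exhibits a transversal in $\mathbb{Z}_2^2$). For the inductive step at even $n\geq 4$, Theorem~\ref{krpt} gives that $f$ is either reducible or semilinear.

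In the reducible case, if $f$ is completely reducible I apply Lemma~\ref{pred} to extract a proper representation of $f$ with external binary quasigroup $h_2$; the $2$-dimensional planes of the associated direction in $Q(f)$ are all isotopic to $Q(h_2)$. If $h_2$ were to possess a transversal, Theorem~\ref{compredlatin}(2) would yield transversals in $Q(f)$, contradicting the hypothesis; hence $h_2$ is transversal-free, so by the base case it is isotopic to $\mathbb{Z}_4$. Performing this analysis across every proper representation of $f$ and invoking Lemma~\ref{z4char}(3) to characterize $\mathbb{Z}_4$-linearity by the $2$-plane parity condition on the Boolean function $\lambda$, I would conclude that $f$ is $\mathbb{Z}_4$-linear. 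If $f$ is reducible but not completely reducible, some irreducible component of arity at least $3$ appears, which is semilinear by Theorem~\ref{krpt}; this subcase reduces to the semilinear case below.

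In the semilinear case, if $f$ is not $\mathbb{Z}_4$-linear then Lemma~\ref{z4char}(3) supplies a $2$-dimensional plane $P\subset\mathbb{Z}_2^n$ with $\sum_{z\in P}\lambda(z)\equiv 0\pmod 2$. I would then build a transversal of $Q(f)$ by selecting entries from four $2$-dimensional sub-latin-squares of $Q(f)$ positioned above $P$ with appropriate choices of the other $n-2$ block coordinates: each such sub-latin-square is locally $\mathbb{Z}_2^2$-like by Lemma~\ref{z22char}(3) and so admits a transversal, and the four positions can be chosen so that every coordinate of the resulting union of entries becomes a permutation of $\{0,1,2,3\}$. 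The main obstacle I expect is coordinating these local choices: the block-level $l$-XOR constraints imposed by the semilinear relation must be reconciled with the within-block constraints governed by $\lambda$, and verifying that these can be satisfied simultaneously demands careful bookkeeping with the block structure and the vanishing $\lambda$-sum on $P$.
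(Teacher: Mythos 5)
There is a genuine gap in your treatment of the reducible case, in two respects. First, you only handle completely reducible quasigroups; a quasigroup that is reducible but not completely reducible is a composition in which some factor of arity at least $3$ is irreducible (hence semilinear by Theorem~\ref{krpt}), but the composition itself is not semilinear, so your claim that this subcase ``reduces to the semilinear case'' does not work. The paper instead argues directly on an arbitrary decomposition $g(x_{\sigma(0)},\ldots,x_{\sigma(m)})=h(x_{\sigma(m+1)},\ldots,x_{\sigma(n)})$: since $n$ is even, one factor has odd arity and therefore has a transversal by Theorem~\ref{oddn}, forcing (via Lemma~\ref{quasi1}) the even-arity factor to be transversal-free, hence $\mathbb{Z}_4$-linear by induction; then Lemma~\ref{quasi2} applied to the retracts $g^a$, $h^a$ forces all even-arity retracts of the other factor to be $\mathbb{Z}_4$-linear as well, which by Lemma~\ref{almostz4} pins down the structure of that factor. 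Second, and more seriously, your inference that $f$ is $\mathbb{Z}_4$-linear once every external binary quasigroup of every proper representation is isotopic to $\mathbb{Z}_4$ is false as stated: the paper exhibits two $3$-dimensional hypercubes, both compositions of two binary $\mathbb{Z}_4$-linear quasigroups, lying in different main classes with different transversal counts, only one of which is $\mathbb{Z}_4$-linear. The missing ingredient is the notion of \emph{principal classes} of binary $\mathbb{Z}_4$-linear quasigroups: the composition is $\mathbb{Z}_4$-linear only when all the binary operations (and the relevant right inverse $+^{-1}$) lie in the same principal class, and when they do not, the paper rearranges the composition so that Lemma~\ref{quasi1} produces a transversal. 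Your appeal to Lemma~\ref{z4char}(3) cannot substitute for this, since that criterion applies only after $f$ is already known to be standardly semilinear.

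Your semilinear case is also incomplete, though it is aimed in the right direction. The correct combinatorial unit is not a $2$-dimensional plane of block positions in $\mathbb{Z}_2^n$ but a \emph{brindled quadruple} of block positions in the $(n+1)$-dimensional Boolean hypercube (including the coordinate $x_0$): Lemma~\ref{brindtrans} shows such a quadruple yields $2\cdot 4^{n-1}$ transversals precisely when the $\lambda$-sum over it vanishes, Proposition~\ref{crit} turns this into a no-transversal criterion, and Lemma~\ref{bool} converts ``all brindled quadruples have odd $\lambda$-sum'' into the $2$-plane parity condition of Lemma~\ref{z4char}(3). The ``careful bookkeeping'' you defer is exactly the content of the proof of Lemma~\ref{brindtrans}, and your description of the four blocks as ``locally $\mathbb{Z}_2^2$-like'' order-$4$ latin squares is not accurate: the blocks are order-$2$ subhypercubes, and the coordination of choices across four of them is what requires the proper-quadruple condition.
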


At last we count transversals in the Cayley tables of the iterated groups $\mathbb{Z}_4$ and $\mathbb{Z}_2^2.$

\begin{teorema} \label{numtranslatin}
Let $Q^n(\mathbb{Z}_4)$ (or $Q^n(\mathbb{Z}_2^2)$) be the $n$-dimensional latin hypercube  isotopic to the Cayley table of the iterated group $\mathbb{Z}_4$ (or the iterated group $\mathbb{Z}_2^2$, respectively).
\begin{enumerate}
\item If $n$ is odd then the number of transversals in the latin hypercube $Q^n(\mathbb{Z}_2^2)$ is equal to the number of transversals in $Q^n(\mathbb{Z}_4)$ and equals $\frac{3}{8} \cdot 24^{n-1} + 5 \cdot 8^{n-2}$.
\item If $n$ is even then the latin hypercube $Q^n(\mathbb{Z}_4)$ has no transversals and the number of transversals in $Q^n(\mathbb{Z}_2^2)$ is  $\frac{3}{8} \cdot 24^{n-1} -  8^{n-2}$. 
\end{enumerate} 
Moreover,  these latin hypercubes are unique by isotopy and parastrophe having the maximum number of transversals among all intercalated latin hypercubes.
\end{teorema}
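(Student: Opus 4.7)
My plan is to exploit the intercalated (standardly semilinear) structure of these two quasigroups via Lemmas~\ref{z4char} and~\ref{z22char}, reducing the transversal count to a combinatorial problem on affine 2-planes in $\mathbb{Z}_2^n$, and then to use the fact that $\lambda_{\mathbb{Z}_4}$ is a degree-2 Boolean polynomial to compute the relevant parities in closed form.

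First I pass to the standardly semilinear forms $h_{\mathbb{Z}_4}$ and $h_{\mathbb{Z}_2^2}$ (transversal counts are isotopy-invariant) and analyze transversals by block pattern: the $l$-projections $B^1,\ldots,B^4\in\mathbb{Z}_2^n$ of the four positions must sum to $0$ and have a $2$--$2$ split in every one of the $n+1$ coordinates, which by a short case check leaves exactly two possibilities. In case~(C), two antipodal blocks $\{B,B+\mathbf{1}\}$ each contain two transversal entries (possible only for $n$ odd); here the two positions within each block are forced to be complementary, so the case contributes $2^{n-1}\cdot(2^{n-1})^2=8^{n-1}$ transversals independently of $\lambda$. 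In case~(E), four distinct blocks form a full-support affine 2-plane $P$ (generators $e,f$ with $e\vee f=\mathbf{1}$ and $(l(e),l(f))\ne(0,0)$) with one entry per block; here I write out the $2n$ pair equations on positions and $2$ on values in the $4n$ low-bit variables, and a direct computation -- summing the two value equations, expanding $\tilde v^i=\sum_k p^i_k+\lambda(B^i)$, and using $\sum_i p^i_k\equiv 0\pmod 2$ from the position equations -- shows the system has one dependency, reducing to the compatibility condition $\sum_{z\in P}\lambda(z)\equiv 0\pmod 2$. When this holds the plane contributes exactly $2^{2n-1}$ transversals, otherwise zero.

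Next I specialize to the two $\lambda$'s. For $\lambda_{\mathbb{Z}_2^2}\equiv 0$ every full-support plane contributes. For $\lambda_{\mathbb{Z}_4}$, rewriting the definition as $\lambda_{\mathbb{Z}_4}(v)\equiv\binom{w(v)}{2}+w(v)\pmod 2$ and using the identity $\sum_{v\in V}v_kv_l\equiv e_kf_l+e_lf_k\pmod 2$ on the linear span $V=\{0,e,f,e+f\}$ gives $\sum_{z\in P}\lambda_{\mathbb{Z}_4}(z)\equiv w(e)w(f)+w(e\wedge f)\pmod 2$; full support forces $w(e\wedge f)=w(e)+w(f)-n$ and the value-support condition eliminates the parity term $(l(e)+1)(l(f)+1)$, yielding
\[
\sum_{z\in P}\lambda_{\mathbb{Z}_4}(z)\equiv n+1\pmod 2.
\]
Hence every full-support plane contributes for $h_{\mathbb{Z}_4}$ when $n$ is odd (explaining the equality of the $\mathbb{Z}_4$ and $\mathbb{Z}_2^2$ counts), while no plane contributes when $n$ is even (proving the no-transversal statement). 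An inclusion-exclusion over coordinate and even-weight hyperplanes supplies the number $F(n)$ of full-support affine 2-planes, and the assembly $8^{n-1}\cdot[n\text{ odd}]+F(n)\cdot 2^{2n-1}$ reproduces the closed forms in the theorem statement.

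For the uniqueness part, among intercalated hypercubes the case-(C) contribution is fixed, so the transversal count is maximized precisely when $\sum_{z\in P}\lambda(z)\equiv 0\pmod 2$ holds for every full-support affine 2-plane $P$. Using the degree-2 identity above together with the parity characterizations of Lemmas~\ref{z4char} and~\ref{z22char} (extended from coordinate 2-planes to all affine 2-planes), the only Boolean functions satisfying this condition are the isotopy-parastrophy images of $\lambda_{\mathbb{Z}_2^2}$ and (for $n$ odd) of $\lambda_{\mathbb{Z}_4}$, giving the uniqueness. The most delicate steps will be the rank computation in case~(E) -- verifying that the single obvious dependency is the only one, so that the contribution is exactly $2^{2n-1}$ rather than a smaller power of $2$ -- together with the extension of the Boolean-function characterizations from coordinate 2-planes to all affine 2-planes in the uniqueness argument.
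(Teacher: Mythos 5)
Your proposal is correct and follows essentially the same route as the paper: the split into twin quadruples (your case (C), worth a fixed $8^{n-1}$ for odd $n$) and brindled quadruples (your case (E), worth $2\cdot 4^{n-1}=2^{2n-1}$ each exactly when the $\lambda$-sum over the quadruple vanishes), the count of brindled quadruples, and the uniqueness argument via characterizing the Boolean functions whose sum vanishes on every brindled quadruple are precisely the paper's Lemmas~\ref{twintrans}, \ref{brindtrans}, \ref{numbrind} and~\ref{bool}. The only cosmetic differences are your degree-$2$ polynomial identity $\sum_{z\in P}\lambda_{\mathbb{Z}_4}(z)\equiv n+1 \pmod 2$ in place of the paper's weight-mod-$4$ counting, and inclusion--exclusion in place of a recurrence for the quadruple count; both are correct and equivalent.
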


Note that the trivial upper bound on the number of transversals in an $n$-dimensional latin hypercube of order 4 is $4!^{n-1} = 24^{n-1}$ and the numbers of transversals in latin hypercubes $Q^n(\mathbb{Z}_2^2)$ and $Q^n(\mathbb{Z}_4)$ of odd dimension are quite close to this bound. So we propose
\begin{con}
Among all latin hypercubes of order $4$, a maximum number of transversals is contained in the odd dimensional Cayley tables of the iterated group $\mathbb{Z}_4$ and in the Cayley tables of the iterated group $\mathbb{Z}_2^2$. 
\end{con}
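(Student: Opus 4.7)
My plan is to exploit the standardly semilinear description of $Q^n(\mathbb{Z}_4)$ and $Q^n(\mathbb{Z}_2^2)$ given by Theorem~\ref{krpt} and Lemmas~\ref{z4char},~\ref{z22char}, writing each cell as a pair $(z,y)\in\mathbb{Z}_2^n\times\mathbb{Z}_2^n$ recording the block index and the position inside the order-$2$ subcube. Classifying transversals by their multiset of block indices, the required $2/2$-splits of all $n+1$ bit coordinates force two combinatorial types: Type~I, where the four cells lie in two complementary blocks $z,\bar z$ (two cells in each), which requires $n$ odd; and Type~II, where the four cells lie in four distinct blocks forming a two-dimensional affine subspace $B=z^{*}+V\subset\mathbb{Z}_2^n$ such that $V$ projects onto $\mathbb{Z}_2$ in every coordinate and $\mathbf{1}\notin V^{\perp}$. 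The non-existence statement for $Q^n(\mathbb{Z}_4)$ with even $n$ is then the classical parity obstruction already recalled after Conjecture~\ref{hypwan}.

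Type~I is counted independently of $\lambda$: each block contributes an unordered antipodal pair of $y$-vectors, giving $8^{n-1}$ transversals when $n$ is odd. For Type~II I would parametrise the four blocks by $(s,t)\in\mathbb{Z}_2^2$ via a basis $\{u,v\}$ of $V$ and analyse the transversal constraints coordinate by coordinate. For each $k\ge 1$ the condition forces $y_k^{(s,t)}$ to be an affine function of $(s,t)$ whose linear part $(b_k,c_k)$ satisfies $u_kc_k+v_kb_k=1$, giving four admissible choices per coordinate and $4^n$ in total. The zeroth-coordinate condition reduces to requiring that $F(s,t):=\lambda(z^{*}+su+tv)+\sum_{k\ge 1}y_k^{(s,t)}$ also be affine with the correct linear part; a short expansion identifies the $st$-coefficient of $F$ with the plane sum $D(B):=\sum_{z\in B}\lambda(z)\bmod 2$, so Type~II contributes $2^{2n-1}$ transversals per admissible $B$ with $D(B)=0$ and none otherwise. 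Hence $T(f)=[n\text{ odd}]\cdot 8^{n-1}+2^{2n-1}\cdot N_0(n,\lambda)$, where $N_0$ counts admissible affine planes with $D=0$.

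To evaluate $N_0$ I would parametrise $V$ by the columns $c_k=(u_k,v_k)\in\{(1,0),(0,1),(1,1)\}$ and, enforcing non-degeneracy of $V$ and $\mathbf{1}\notin V^{\perp}$, use inclusion–exclusion over the $n+1$ bad hyperplanes (equivalently, a direct Fourier count on $\mathbb{Z}_2^2$) to obtain the number of admissible $V$'s, namely $(3^n-3)/8$ for odd $n$ and $(3^n-1)/8$ for even $n$, each with $2^{n-2}$ cosets. For $\lambda_{\mathbb{Z}_2^2}\equiv 0$ one has $D\equiv 0$ trivially, and for $\lambda_{\mathbb{Z}_4}(z)=\sum_i z_i+\sum_{i<j}z_iz_j\pmod 2$ a direct expansion yields the basis-invariant formula $D(V)=(\sum_i u_i)(\sum_i v_i)+\sum_i u_iv_i\pmod 2$. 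A short parity analysis on the multiplicity triple $(a,b,c)$ of the three column types then shows $D_{\mathbb{Z}_4}(V)=0$ on every admissible $V$ when $n$ is odd and $D_{\mathbb{Z}_4}(V)=1$ on every admissible $V$ when $n$ is even, so substituting into the master formula recovers the claimed closed forms after the algebraic manipulation $3^{n-1}\cdot 8^{n-2}=24^{n-1}/8$.

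For the maximality claim I would observe that, among standardly semilinear quasigroups, $T(f)$ is maximised iff $N_0(n,\lambda)$ is maximal, iff $D(B)=0$ for every admissible $B$; since $D$ is $\mathbb{Z}_2$-linear in $\lambda$, the set of maximising $\lambda$'s is an affine subspace of the Boolean functions. The isotopy and parastrophe action on the underlying quasigroup translates, on the level of $\lambda$, into shifts by affine Boolean functions plus permutations/linear substitutions of its arguments, so the question reduces to identifying the orbits of that subspace. The hard step is to exclude every orbit other than those of $\lambda=0$ and $\lambda=\lambda_{\mathbb{Z}_4}$; I would do this by selecting, for each surviving monomial of degree $\ge 3$, a specific admissible $(V,z^{*})$ pair whose plane sum detects it as nonzero, using that for $n\ge 3$ the admissible affine planes together separate all non-affine, non-$\sigma_2$-type Boolean functions.
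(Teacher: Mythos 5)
The statement you were asked about is stated in the paper as a \emph{conjecture}, not a theorem: the paper supports it only by computational data for $n\le 5$ and by the weaker Theorem~\ref{numtranslatin}, which asserts maximality of $Q^n(\mathbb{Z}_4)$ (odd $n$) and $Q^n(\mathbb{Z}_2^2)$ only \emph{among intercalated latin hypercubes}, i.e.\ among (standardly) semilinear quasigroups. Your proposal is, in substance, a re-derivation of that theorem: your Type~I/Type~II split is the paper's twin/brindled quadruple dichotomy (Lemmas~\ref{twintrans} and~\ref{brindtrans}), your count of $2^{2n-1}$ transversals per admissible plane with $D(B)=0$ is Lemma~\ref{brindtrans}, your count of admissible affine planes $2^{n-2}(3^n-3)/8$ resp.\ $2^{n-2}(3^n-1)/8$ agrees with Lemma~\ref{numbrind}, and your closed forms match Theorem~\ref{numtrans}. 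The identification of the maximisers among semilinear quasigroups is handled in the paper by Lemma~\ref{bool} together with the plane-sum characterizations in Lemmas~\ref{z4char} and~\ref{z22char}; your monomial-detection sketch for that step is only an outline, but the step itself is not the main problem.

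The genuine gap is that your argument never leaves the semilinear world, while the conjecture quantifies over \emph{all} latin hypercubes of order $4$. Theorem~\ref{krpt} says every $n$-ary quasigroup of order $4$ is reducible or semilinear, and the reducible ones are in general not semilinear, so their transversals admit no block decomposition of the kind you use: the cells of a transversal in a reducible non-semilinear hypercube need not be confined to $2\times 2\times\cdots\times 2$ subcubes at all. The semilinear maximum is roughly $\tfrac{3}{8}\cdot 24^{n-1}$, while the trivial upper bound is $24^{n-1}$, so there is ample room, a priori, for some reducible quasigroup to beat $Q^n(\mathbb{Z}_2^2)$; ruling this out would require an upper bound on $T(f)$ for compositions (the paper only proves \emph{lower} bounds there, via Lemmas~\ref{quasi1} and~\ref{quasi2}), and nothing in your proposal supplies one. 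This is precisely why the paper leaves the statement as a conjecture. Your plan, if completed, would prove Theorem~\ref{numtranslatin}, not the conjecture.
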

Computational data from~\cite{wancec} confirm this conjecture for all $n \leq 5$.

\section{Transversals in completely reducible quasigroups}

In this section we prove a lower bound on the number of transversals in a majority of $n$-ary completely reducible quasigroups that was announced in~\cite{obz}.  For this purpose we need the following two lemmas that were initially obtained in the same paper. These lemmas will be widely used through the paper, and since they are simple to prove, we repeat their proofs here. The meaning of both lemmas for latin hypercubes of order $q$ is that we can construct a transversal by choosing transversally $q$ $k$-dimensional planes and  taking then by one element from each of chosen planes. 

Throughout the lemmas we suppose that an $n$-ary quasigroup $f$ of order $q$ is a composition of an $(n-m)$-ary quasigroup $h$ and an $(m+1)$-ary quasigroup $g$:
$$f(x_1, \ldots , x_n) = x_0  \Leftrightarrow g(x_{\sigma(0)}, x_{\sigma(1)}, \ldots , x_{\sigma(m)}) = h(x_{\sigma(m+1)}, \ldots , x_{\sigma(n)})$$
for some permutation $\sigma \in S_{n+1}.$

\begin{lemma} \label{quasi1}
If a quasigroup $f$ is a composition of quasigroups $g$ and $h$ having $T(g)$ and $T(h)$ transversals respectively, then $f$ has at least $T(g)T(h)$ transversals:
$$T(f) \geq T(g)T(h).$$
\end{lemma}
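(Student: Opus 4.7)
The plan is to define a map from pairs $(T_g, T_h)$, where $T_g$ is a transversal of $g$ and $T_h$ is a transversal of $h$, to transversals of $f$, and to show that this map is injective; this immediately yields $T(f) \geq T(g)\,T(h)$. Writing $T_g = \{(y^i; b^i_1, \ldots, b^i_{m+1})\}_{i=1}^q$ with $y^i = g(b^i_1, \ldots, b^i_{m+1})$, and similarly $T_h = \{(z^j; c^j_1, \ldots, c^j_{n-m})\}_{j=1}^q$ with $z^j = h(c^j_1, \ldots, c^j_{n-m})$, the transversal property applied to the output coordinate says that both $\{y^i\}_{i=1}^q$ and $\{z^j\}_{j=1}^q$ exhaust $\Sigma_q$. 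Hence there is a unique permutation $\pi \in S_q$ with $y^i = z^{\pi(i)}$ for every $i$. This common value plays the role of the intermediate variable appearing in $g(\ldots) = h(\ldots)$ but not as any $x$-coordinate of $f$, and $\pi$ is exactly the gluing it forces.

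Using $\sigma$ from the composition definition, I would set $\alpha^i = (a^i_0, \ldots, a^i_n)$ by $a^i_{\sigma(k)} = b^i_{k+1}$ for $0 \leq k \leq m$ and $a^i_{\sigma(k)} = c^{\pi(i)}_{k-m}$ for $m+1 \leq k \leq n$. Then
\[
g(a^i_{\sigma(0)}, \ldots, a^i_{\sigma(m)}) \;=\; y^i \;=\; z^{\pi(i)} \;=\; h(a^i_{\sigma(m+1)}, \ldots, a^i_{\sigma(n)}),
\]
so $\alpha^i$ lies in the graph of $f$. For the transversal property I would check the two index groups separately: on the coordinates $\sigma(0), \ldots, \sigma(m)$ the values are precisely the non-output entries of $T_g$, and so each such coordinate takes every value in $\Sigma_q$ exactly once as $i$ varies; on the coordinates $\sigma(m+1), \ldots, \sigma(n)$ the values are the non-output entries of $T_h$ re-indexed by $\pi \in S_q$, so again each of them exhausts $\Sigma_q$. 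Hence $\{\alpha^i\}_{i=1}^q$ is a transversal of $f$.

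For injectivity I would note that, starting from the set $\{\alpha^i\}_{i=1}^q$, one recovers $T_g$ by projecting each $\alpha^i$ onto the coordinates $\sigma(0), \ldots, \sigma(m)$ and prepending the computed value $g(a^i_{\sigma(0)}, \ldots, a^i_{\sigma(m)})$, and recovers $T_h$ by the analogous projection on the remaining coordinates. I do not anticipate a serious obstacle in this argument; the one point worth highlighting is that the pairing $\pi$ is uniquely forced by the equality of the outputs of $g$ and $h$, which is exactly why an arbitrary pair of transversals can always be combined into a transversal of the composition.
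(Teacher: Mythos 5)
Your proof is correct and follows essentially the same route as the paper: both constructions pair the elements of the two transversals by matching the common intermediate value $g(\cdots)=h(\cdots)$ (the paper normalizes both outputs to equal $i$, which is exactly your permutation $\pi$ made implicit), concatenate the input parts via $\sigma$, and verify the transversal and injectivity properties coordinate-group by coordinate-group. No gap; your write-up just makes explicit the details the paper leaves to the reader.
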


\begin{proof}
Let $\left\{ (a^i_{m+1}, \ldots, a_{n}^i, i)\right\}_{i=1}^q$ be a transversal in the quasigroup $h$  and $\left\{ ( a_0^i ,a_1^i, \ldots, a_m^i, i)\right\}_{i=1}^q$ be a transversal in the quasigroup $g$. It can be checked that the set $$\left\{ (a_{\sigma^{-1}(0)}^i, a_{\sigma^{-1}(1)}^i, \ldots, a_{\sigma^{-1}(n)}^i)\right\}_{i=1}^q$$
is a transversal in the quasigroup $f$, and different pairs of transversals from $g$ and $h$ give distinct transversals in $f$. 
\end{proof}

\begin{lemma} \label{quasi2}
Assume that for some $a \in \Sigma_q$ the $(n-m -1)$-ary quasigroup $h^a$ defined by the equation $h( x_{m+1}, \ldots, x_n) = a$ has $T(h^a)$ transversals and the $m$-ary quasigroup $g^a$ defined by the equation $g(x_0, x_1, \ldots, x_m) = a$ has $T(g^a)$ transversals. Then the quasigroup $f$ has at least $q! \cdot T(h^a)T(g^a)$ transversals:
$$T(f) \geq q! \cdot T(h^a)T(g^a).$$
\end{lemma}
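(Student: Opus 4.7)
The plan is to mimic the construction in the proof of Lemma~\ref{quasi1}, but to exploit the extra freedom gained by tying both the outer piece $g$ and the inner piece $h$ to the common value $a$ simultaneously. Without loss of generality I would take $\sigma$ to be the identity, reinstating a general $\sigma$ at the end by rearranging the coordinates of the final tuple, exactly as in Lemma~\ref{quasi1}. The building blocks are a transversal $T_g = \{(b^i_0, b^i_1, \ldots, b^i_m)\}_{i=1}^q$ of $g^a$, a transversal $T_h = \{(c^i_{m+1}, \ldots, c^i_n)\}_{i=1}^q$ of $h^a$, and an arbitrary permutation $\tau \in S_q$. From these I would form the set of $(n+1)$-tuples
$$S(T_g, T_h, \tau) = \bigl\{(b^i_0, b^i_1, \ldots, b^i_m, c^{\tau(i)}_{m+1}, \ldots, c^{\tau(i)}_n)\bigr\}_{i=1}^{q}.$$

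The second step is to verify that every such $S$ is a transversal in $f$. Since $g(b^i_0, \ldots, b^i_m) = a = h(c^{\tau(i)}_{m+1}, \ldots, c^{\tau(i)}_n)$ for each $i$, the composition identity gives $f(b^i_1, \ldots, b^i_m, c^{\tau(i)}_{m+1}, \ldots, c^{\tau(i)}_n) = b^i_0$, so each tuple of $S$ lies in the graph of $f$. The coordinates $0, 1, \ldots, m$ of the tuples of $S$ traverse permutations of $\Sigma_q$ because $T_g$ is a transversal of $g^a$, and for each $j \in \{m+1, \ldots, n\}$ the $j$-th coordinate is the composition of the permutation $i \mapsto c^i_j$ with $\tau$, hence again a permutation. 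Thus $S$ satisfies the transversal conditions.

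The step I expect to require the most care is verifying that the assignment $(T_g, T_h, \tau) \mapsto S(T_g, T_h, \tau)$ is injective, so that the count $q! \cdot T(g^a) \cdot T(h^a)$ is not inflated by coincidences. The argument I have in mind recovers the triple from $S$ by projection: the projections of the $q$ tuples of $S$ onto the first $m+1$ coordinates form exactly the set $T_g$, the projections onto the last $n-m$ coordinates form exactly $T_h$, and the internal gluing inside each tuple of $S$ determines the bijection $\tau$ between $T_g$ and $T_h$ uniquely. Once this injectivity is in hand, the desired lower bound $T(f) \geq q! \cdot T(g^a) \cdot T(h^a)$ follows at once.
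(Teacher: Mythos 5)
Your proposal is correct and follows essentially the same construction as the paper: glue a transversal of $g^a$ to a $\tau$-shifted transversal of $h^a$ (the paper applies $\tau$ to the $g$-part rather than the $h$-part, which is an immaterial difference), then undo $\sigma$. The verification and the injectivity count via projections match the paper's argument.
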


\begin{proof}
Let $\left\{ ( a^i_{m+1},  \ldots, a_{n}^i)\right\}_{i=1}^q$ be a transversal in the quasigroup $h^a$ and  $\left\{ (a_0^j, a_1^j, \ldots, a_{m}^j)\right\}_{j=1}^q$ be a transversal in $g^a$. It is easy to check that for every permutation $\tau \in S_q$ the set 
$$\left\{ (a_{\sigma^{-1}(0)}^{\tau(i)}, a_{\sigma^{-1}(1)}^{\tau(i)},  \ldots, a_{\sigma^{-1}(m)}^{\tau(i)}, a_{\sigma^{-1}(m+1)}^{i}, \ldots, a_{\sigma^{-1}(n)}^{i})\right\}_{i=1}^q$$
is a transversal in the quasigroup $f$ and each pair of transversals from $g^a$ and $h^a$ produces $q!$ distinct transversals in $f$.
\end{proof}

Using these lemmas, we now prove a lower bound on the number of transversals in certain completely reducible quasigroups. As a corollary, we obtain Theorem~\ref{compredlatin}.

\begin{teorema}\label{compred}
Let $f$ be a completely reducible $n$-ary quasigroup of order $q$. 
\begin{enumerate}
\item If $n$ is odd then $f$ has at least $(q\cdot q!)^{\frac{n-1}{2}}$ transversals. 
\item If $n$ is even and the external quasigroup for one of proper representations of $f$ has a transversal, then $f$ has at least $(q\cdot q!)^{\left\lfloor  \frac{n-1}{2} \right\rfloor}$ transversals.
\end{enumerate}
\end{teorema}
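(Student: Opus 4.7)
My plan is to induct on $n$, splitting by parity. The base cases are trivial: the claimed bound is $(q\cdot q!)^0 = 1$ for $n=1$ (a $1$-ary quasigroup is a permutation whose graph is its unique transversal) and for $n=2$ under the hypothesis of case~2 (the binary $f$ itself has a transversal). For the inductive step, by Lemma~\ref{pred} I fix a proper representation
\[
h_1(x_{\pi(0)},\ldots,x_{\pi(n-2)}) \;=\; h_2(x_{\pi(n-1)},x_{\pi(n)})
\]
of $f$, with $h_1$ completely reducible of arity $n-1$ and $h_2$ binary, choosing in case~2 the representation whose external quasigroup $h_2$ has a transversal.

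For $n$ odd I apply Lemma~\ref{quasi2} to this representation. Since $h_2$ is binary, $h_2^a$ is $1$-ary and $T(h_2^a) = 1$; and $h_1^a$ is an $(n-2)$-ary completely reducible quasigroup of odd arity, so the inductive hypothesis for case~1 gives $T(h_1^a) \ge (q\cdot q!)^{(n-3)/2}$. The transversals of $f$ produced by the construction in Lemma~\ref{quasi2} from distinct values of $a$ are pairwise disjoint, because all $q$ tuples in such a transversal share the common internal value $h_1 = h_2 = a$. Summing the bound over $a \in \Sigma_q$ therefore gives
\[
T(f) \;\ge\; q!\sum_{a\in\Sigma_q} T(h_1^a)\,T(h_2^a) \;\ge\; q! \cdot q \cdot (q\cdot q!)^{(n-3)/2} \;=\; (q\cdot q!)^{(n-1)/2}.
\]

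For $n$ even I use Lemma~\ref{quasi1}: $T(f) \ge T(h_1)\,T(h_2) \ge T(h_1)$, since $h_2$ has a transversal by the choice of representation. The quasigroup $h_1$ is completely reducible of odd arity $n-1$, so case~1 of the theorem (established inductively) gives $T(h_1) \ge (q\cdot q!)^{(n-2)/2} = (q\cdot q!)^{\lfloor (n-1)/2 \rfloor}$, which closes the induction.

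The one nontrivial ingredient, used in the odd case, is the claim that $h_1^a$ inherits complete reducibility from $h_1$; I expect this to be the main obstacle. I would argue it by taking a proper representation of $h_1$, identifying which of its two constituents contains the variable whose output is being fixed to $a$, and absorbing the constant into that constituent: a binary quasigroup restricted to a constant output becomes a permutation, which may be folded into the adjacent binary operation, and a restriction of the larger constituent is handled by an inner induction on arity. The remaining structure is then read off as a composition of two completely reducible quasigroups of arity at least~$2$, giving a complete reduction of $h_1^a$.
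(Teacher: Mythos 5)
Your proof is correct and follows essentially the same route as the paper's: induction via Lemma~\ref{pred}, with Lemma~\ref{quasi2} supplying the factor $q\cdot q!$ for odd $n$ and Lemma~\ref{quasi1} reducing even $n$ to the odd case applied to $h_1$. The two points you single out --- the disjointness of the transversals obtained from distinct values of $a$, and the complete reducibility of the retract $h_1^a$ --- are left implicit in the paper (the latter follows from the binary-tree representation of completely reducible quasigroups used to prove Lemma~\ref{pred}), and your handling of both is sound.
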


\begin{proof}
1. The proof is by induction on $n$. It is easy to see that a permutation (a 1-ary quasigroup) has a unique transversal. 

Since an $n$-ary quasigroup $f$ is completely reducible, we can apply Lemma~\ref{pred} and instead of the quasigroup $f$ we examine a quasigroup $g$ defined by the equation
$$ h_1(x_{0}, x_1, \ldots, x_{n-2})  = h_2(x_{n-1}, x_{n}),$$ 
where $h_1$ is a completely reducible $(n-1)$-ary quasigroup and $h_2$ is a binary quasigroup.

For each $a \in \Sigma_q$ consider the 1-ary quasigroup $h_2^a$ defined by the equation $h_2(x_{n-1}, x_n) = a$ and the $(n-2)$-ary quasigroup $h_1^a$ defined by the equation $h_1(x_0, x_1, \ldots, x_{n-2}) =a.$ Since there are $q$ ways to choose $a$, Lemma~\ref{quasi2} implies
$$T(g) \geq (q \cdot q!)T(h_1^a)T(h_2^a) = (q \cdot q!)T(h_1^a).$$
Note that $n-2$ is odd, so by the inductive assumption, the quasigroup $h_1^a$ has at least $(q \cdot q!)^{\frac{n-3}{2}}$ transversals. Since quasigroups $f$ and $g$ have the same number of transversals, we obtain $T(f) \geq (q\cdot q!)^{\frac{n-1}{2}}$.

2. Under made assumptions and by Lemma~\ref{pred}, for the quasigroup $f$ there exists a proper representation $g$ defined by the equation
$$ h_1(x_{0}, x_1, \ldots, x_{n-2})  = h_2(x_{n-1}, x_{n}),$$ 
where $h_1$ is a completely reducible $(n-1)$-ary quasigroup and the binary quasigroup $h_2$ has a transversal.
By the previous clause, the quasigroup $h_1$ has at least $(q \cdot q!)^{\frac{n-2}{2}}$ transversals. Using Lemma~\ref{quasi1}, we obtain 
$$T(f) = T(g) \geq T(h_1)T(h_2) \geq (q\cdot q!)^{\left\lfloor  \frac{n-1}{2} \right\rfloor}.$$
\end{proof}

\section{Transversals in semilinear quasigroups}

Due to a handy definition of standardly semilinear quasigroups, a set of all their transversals can be divided into describable parts that allows us to analyze the numbers of transversals in semilinear quasigroups. Let us introduce the main tools serving this purpose.

Further, under a quadruple we will mean a multiset of 4 Boolean vectors. 
A quadruple  $\left\{z^1, z^2, z^3, z^4\right\}$, where $z^j$ are Boolean $(n+1)$-vectors,  is \textit{proper} if for all $i \in \left\{0, \ldots, n\right\}$ the set $\left\{z^1_i, z^2_i, z^3_i, z^4_i\right\} $ coincides with the set $ \left\{0, 0, 1, 1\right\}$ as a multiset.  In other words, a quadruple  is proper if at each position it covers zeroes and ones exactly twice.

In most cases we will be interested in a special subset of proper quadruples, namely worthwhile quadruples. A proper quadruple  $\left\{z^1, z^2, z^3, z^4\right\}$ is \textit{worthwhile} if each $z^j$ has an even weight.

Recall that an $n$-ary quasigroup $f$ of order 4 is standardly semilinear if there exists a Boolean function $\lambda: \mathbb{Z}_2^n \rightarrow \mathbb{Z}_2$ such that
\begin{gather*}
x_0 = f(x_1, \ldots, x_n) \Leftrightarrow \\ \Leftrightarrow l(x_0) \oplus \ldots \oplus l(x_n) = 0 \mbox{ and } x_0 \oplus \ldots \oplus x_n \oplus \lambda(l(x_1), \ldots, l(x_n)) = 0,
\end{gather*}
where the function $l$ is defined as $l(0)=l(1)=0,~ l(2)=l(3)=1.$ The following lemma makes clear the worth of worthwhile quadruples.

\begin{lemma} \label{worth}
Let $f$ be a standardly semilinear $n$-ary quasigroup of order $4$.
Suppose $\left\{\alpha^1, \alpha^2, \alpha^3, \alpha^4\right\}$ is a transversal in the quasigroup $f$. Then the quadruple $\left\{l(\alpha^1), l(\alpha^2), l(\alpha^3), l(\alpha^4)\right\}$ is worthwhile. 
\end{lemma}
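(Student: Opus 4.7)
The plan is to read off both requirements in the definition of a worthwhile quadruple directly from the two constraints that $\{\alpha^1,\alpha^2,\alpha^3,\alpha^4\}$ must satisfy as a transversal of a standardly semilinear quasigroup. First I would verify that the quadruple $\{l(\alpha^1), l(\alpha^2), l(\alpha^3), l(\alpha^4)\}$ is proper: fixing any position $k \in \{0,\ldots,n\}$, the transversal condition forces the multiset $\{a_k^1, a_k^2, a_k^3, a_k^4\}$ to equal $\{0,1,2,3\}$, and by the definition $l(0)=l(1)=0$, $l(2)=l(3)=1$, applying $l$ coordinate-wise yields precisely the multiset $\{0,0,1,1\}$. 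This is the defining property of a proper quadruple.

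Next I would verify that every $l(\alpha^j)$ has even weight, which upgrades ``proper'' to ``worthwhile.'' Because $\alpha^j$ belongs to the graph of $f$, the first half of the standardly semilinear defining equation reads $l(a_0^j) \oplus l(a_1^j) \oplus \cdots \oplus l(a_n^j) = 0$, and this sum is by definition the weight of $l(\alpha^j)$ modulo $2$. Hence each $l(\alpha^j)$ has even weight, and combined with the previous paragraph this gives exactly the hypotheses of a worthwhile quadruple. No real obstacle is anticipated here; the lemma is essentially an immediate unpacking of the two halves of the semilinear equation (the $l$-parity half and the transversal-at-each-coordinate condition), and the second half of the semilinear equation, involving $\lambda$, is not even needed for this direction.
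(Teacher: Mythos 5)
Your proof is correct and is exactly the unpacking the paper has in mind: the paper simply states that the lemma follows trivially from the definitions, and your two observations (pairwise-distinct entries at each coordinate give the multiset $\{0,0,1,1\}$ after applying $l$, and the $l$-parity half of the semilinear equation gives even weight) are precisely that argument made explicit.
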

In other words, if $Q$ is an intercalated latin hypercube, then each transversal in $Q$ belongs to blocks whose positions may be presented only as a worthwhile quadruple in the Boolean hypercube.
The lemma trivially follows from the definitions of a transversal, of a standardly semilinear quasigroup, and of a worthwhile quadruple.

Next we divide all worthwhile quadruples into two classes, which we call twin and brindled quadruples. Note that if a worthwhile quadruple $\left\{z^1, z^2, z^3, z^4\right\}$ contains two identical $(n+1)$-vectors, the other two vectors are the same. Moreover, in this case we may assume that $z^1 = z^2$ and $z^3 = z^4 = \nu(z^1) = \nu(z^2)$. We will say that worthwhile quadruples composed of two pairs of identical vectors are \textit{twin} quadruples and worthwhile quadruples formed by four different vectors are \textit{brindled}. It is easy to see that twin quadruples exist only if $n+1$ is even (that is $n$ is odd), and brindled quadruples exist for all $n \geq 2$.

For an intercalated latin hypercube $Q$, a twin quadruple defines a pair of diagonally located blocks and a brindled quadruples gives four different blocks such that each hyperplane of $Q$ intersects exactly two blocks.

We start with the investigation of transversals that can be given by twin quadruples. 

\begin{lemma} \label{twintrans}
Let $n$ be odd and let $f$ be a standardly semilinear $n$-ary quasigroup of order $4$ such that
\begin{gather*}
x_0 = f(x_1, \ldots, x_n) \Leftrightarrow \\ \Leftrightarrow l(x_0) \oplus \ldots \oplus l(x_n) = 0 \mbox{ and } x_0 \oplus \ldots \oplus x_n \oplus \lambda(l(x_1), \ldots, l(x_n)) = 0.
\end{gather*}
The number of transversals $\left\{\alpha^1, \alpha^2, \alpha^3, \alpha^4\right\}$ for which  $\left\{l(\alpha^1), l(\alpha^2), l(\alpha^3), l(\alpha^4)\right\}$ is a twin quadruple is equal to $8^{n-1}$.
\end{lemma}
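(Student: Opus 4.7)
The plan is to enumerate the possible twin quadruples and, for each, count the transversals whose $l$-image equals it.

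Since $n$ is odd, $n+1$ is even, so the coordinatewise complementation $\nu$ preserves the set of even-weight vectors in $\mathbb{Z}_2^{n+1}$. A twin quadruple $\{z,z,\nu(z),\nu(z)\}$ is determined by the unordered pair $\{z,\nu(z)\}$ of distinct even-weight vectors, so there are exactly $2^n/2=2^{n-1}$ twin quadruples.

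I then fix one twin quadruple and count its transversals. Writing each entry $a\in\{0,1,2,3\}$ as $(l(a),b(a))\in\mathbb{Z}_2\times\mathbb{Z}_2$ with $b(a)$ the low bit makes $\oplus$ act coordinatewise. Label the four tuples so that $\alpha^1,\alpha^2$ carry high-bit string $z$ and $\alpha^3,\alpha^4$ carry high-bit string $\nu(z)$. The transversality condition forces $\{b_k^1,b_k^2\}=\{b_k^3,b_k^4\}=\{0,1\}$ at every coordinate $k$, so $b^2$ is the coordinatewise complement of $b^1$ and $b^4$ is that of $b^3$. Hence the unordered pair $\{\alpha^1,\alpha^2\}$ is encoded by an unordered pair of complementary vectors in $\mathbb{Z}_2^{n+1}$, of which there are $2^{n+1}/2=2^n$, and the same count holds for $\{\alpha^3,\alpha^4\}$.

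Next I would impose the semilinear equation on each $\alpha^i$. The high-bit parity $\bigoplus_k l(a_k^i)=0$ is automatic since both $z$ and $\nu(z)$ have even weight, so the only surviving condition reads $\bigoplus_{k=0}^n b_k^i=\lambda(l(a_1^i),\dots,l(a_n^i))$. For $i\in\{1,2\}$ this right-hand side is the constant $c_1:=\lambda(z_1,\dots,z_n)$, and for $i\in\{3,4\}$ it is $c_2:=\lambda(\nu(z_1),\dots,\nu(z_n))$. Because $n+1$ is even, $\bigoplus_k b_k^1$ is invariant under replacing $b^1$ by its complement, so the equations for $\alpha^1$ and $\alpha^2$ coincide, and likewise for the pair $\alpha^3,\alpha^4$. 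Each of the two resulting linear conditions is nontrivial and cuts the respective $2^n$ candidates in half, leaving $2^{n-1}$ valid unordered pairs on each side. Independence of the two pairs produces $2^{n-1}\cdot 2^{n-1}=4^{n-1}$ transversals per twin quadruple, and summing over all twin quadruples gives $2^{n-1}\cdot 4^{n-1}=8^{n-1}$.

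The main obstacle is the careful ordered-versus-unordered bookkeeping and verifying that complementation preserves the low-bit equation; both work out precisely because $n+1$ is even, which is exactly why the odd-$n$ hypothesis is needed.
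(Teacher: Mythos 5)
Your proof is correct and follows essentially the same route as the paper's: count the $2^{n-1}$ twin quadruples, observe that within each block-pair the two tuples sharing a high-bit string must be coordinatewise complements of each other, and use the evenness of $n+1$ to see that the single low-bit parity condition selects exactly $2^{n-1}$ valid complementary pairs on each side, giving $4^{n-1}$ transversals per quadruple. The only cosmetic difference is the order of bookkeeping (you halve the $2^n$ complementary pairs by the linear condition, while the paper first imposes the condition on vectors and then pairs them up), so nothing further is needed.
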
 

\begin{proof}

Note that the number of twin quadruples is equal to the number of unordered pairs $\left\{z, \nu(z)\right\}$ from the Boolean $(n+1)$-dimensional hypercube such that both $z$ and $\nu(z)$ have an even weight. Consequently, there are exactly $2^{n-1}$ twin quadruples. 

Let $\left\{z, z, \nu(z), \nu(z)\right\}$ be a twin quadruple. Consider all unordered pairs $\left\{\alpha^1, \alpha^2\right\}$ of $(n+1)$-vectors, where $\alpha^1 = (a_0^1, \ldots, a_n^1)$, $\alpha^2 = (a_0^2, \ldots, a_n^2)$, $a^i_j \in \Sigma_4$, satisfying the following conditions:
\begin{itemize} 
\item $l(a_0^1) = l(a_0^2) = z_0, \ldots, l(a_n^1) = l(a_n^2) = z_n;$
\item $a_0^1 \neq a_0^2, \ldots, a_n^1 \neq a_n^2;$
\item $a_0^1 \oplus \ldots \oplus a_n^1 = a_0^2 \oplus \ldots \oplus a_n^2 = \lambda(z_1, \ldots, z_n)$.
\end{itemize}

For every $(n+1)$-vector $\alpha^1$ satisfying these conditions there exists a unique complement $\alpha^2$, because the conditions implies $\alpha^2 = \nu(\alpha^1)$. Consequently, there are $2^{n-1}$ different pairs  $\left\{\alpha^1,  \alpha^2\right\}$ that will be the first two elements of transversals.

The last two elements $\alpha^3$ and $\alpha^4$ of transversals in the quasigroup $f$ we construct independently of $\alpha^1$ and $\alpha^2$ in a similar way but using $\nu(z)$ instead $z$. It is easy to see that all $\alpha^i,~i=1, \ldots, 4$, differ at all positions, and so they compose a transversal. Also every transversal $\left\{\beta^1, \beta^2, \beta^3, \beta^4\right\}$ for which  $\left\{l(\beta^1), l(\beta^2), l(\beta^3), l(\beta^4)\right\} = \left\{z, z, \nu(z), \nu(z)\right\}$ can be obtained by this construction. Therefore, twin quadruples produce exactly $\left(2^{n-1}\right)^3$ transversals in the quasigroup $f$.
\end{proof}

For an $n$-dimensional itercalated latin hypercube $Q$,  Lemma~\ref{twintrans} may be rewritten and proved in a simpler way. In this lemma we count transversals that belong to pairs of diagonally located blocks. If $n$ is even then diagonal blocks contain latin subhypercubes over the same set of symbols, and so such pairs of blocks can not produce transversals. If $n$ is odd then we independently choose a transversal in each latin subhypercube, unite them and obtain a transversal in $Q$. It only remains to multiply the number of constructed transversals by the number of diagonally located blocks.  

From Lemma~\ref{twintrans} a lower bound on the number of transversals in semilinear quasigroups of odd arity follows.

\begin{sled}\label{semiodd}
Let $f$ be a semilinear $n$-ary quasigroup of order $4$ where $n$ is odd. Then $f$ has at least $8^{n-1}$ transversals.
\end{sled}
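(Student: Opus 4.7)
The plan is to derive this as an immediate consequence of Lemma~\ref{twintrans}. Recall that $f$ being semilinear means, by definition, that $f$ is isotopic to some standardly semilinear $n$-ary quasigroup $f'$ of order $4$. Earlier in the paper it was established that isotopic quasigroups have the same number of transversals, so $T(f) = T(f')$.

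Since $n$ is odd, twin worthwhile quadruples exist in the Boolean $(n+1)$-dimensional hypercube. Apply Lemma~\ref{twintrans} to $f'$: the transversals of $f'$ whose associated quadruple $\{l(\alpha^1), l(\alpha^2), l(\alpha^3), l(\alpha^4)\}$ is twin number exactly $8^{n-1}$. Since this is merely a subset of the full set of transversals of $f'$, we conclude
$$T(f) = T(f') \geq 8^{n-1},$$
which is the claimed bound.

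There is no real obstacle: the only things to check are that the hypotheses of Lemma~\ref{twintrans} are met (which they are, since $n$ is odd and $f'$ is standardly semilinear by choice of isotopy), and that the transversals counted there genuinely lie among all transversals of $f'$ (which is clear from the construction in the proof of that lemma). Thus the corollary is essentially a bookkeeping step extracting a lower bound from the equality established in Lemma~\ref{twintrans}.
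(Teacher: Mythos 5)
Your proposal is correct and is exactly the argument the paper intends: the corollary is stated as an immediate consequence of Lemma~\ref{twintrans}, using the isotopy to a standardly semilinear quasigroup and the invariance of the number of transversals under isotopy. No further comment is needed.
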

 
Consider now how many transversals each brindled quadruple  can generate in a standardly semilinear quasigroup. In further statements for an $(n+1)$-vector $z = (z_0, z_1, \ldots, z_n)$ we denote by $\overline{z}$ the $n$-vector $(z_1, \ldots, z_n)$.

\begin{lemma} \label{brindtrans}
Let $f$ be a standardly semilinear $n$-ary quasigroup such that
\begin{gather*}
x_0 = f(x_1, \ldots, x_n) \Leftrightarrow \\ \Leftrightarrow l(x_0) \oplus \ldots \oplus l(x_n) = 0 \mbox{ and } x_0 \oplus \ldots \oplus x_n \oplus \lambda(l(x_1), \ldots, l(x_n)) = 0.
\end{gather*}
Suppose $\left\{z^1, z^2, z^3, z^4\right\}$ is a brindled quadruple of Boolean $(n+1)$-vectors.
\begin{enumerate}
\item If $\lambda(\overline{z^1}) \oplus \lambda(\overline{z^2}) \oplus \lambda(\overline{z^3}) \oplus \lambda(\overline{z^4}) = 1$ then in the quasigroup $f$ there are no transversals $\left\{\alpha^1, \alpha^2, \alpha^3, \alpha^4\right\}$ such that $\left\{l(\alpha^1), l(\alpha^2), l(\alpha^3), l(\alpha^4)\right\} = \left\{z^1, z^2, z^3, z^4\right\}.$
\item If $\lambda(\overline{z^1}) \oplus \lambda(\overline{z^2}) \oplus \lambda(\overline{z^3}) \oplus \lambda(\overline{z^4}) = 0$ then there exist exactly $2 \cdot 4^{n-1}$ transversals $\left\{\alpha^1, \alpha^2, \alpha^3, \alpha^4\right\}$ such that $\left\{l(\alpha^1), l(\alpha^2), l(\alpha^3), l(\alpha^4)\right\} = \left\{z^1, z^2, z^3, z^4\right\}.$
\end{enumerate}
\end{lemma}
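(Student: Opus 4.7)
The plan is to recast the counting as enumerating solutions of a linear system over $\mathbb{Z}_2$, and then to compute its rank using that the four vectors $z^j$ are pairwise distinct (brindledness). Using the bit decomposition $a = 2 l(a) + (a \bmod 2)$ I would write each candidate entry as $a^j_i = 2 z^j_i + b^j_i$ with $b^j_i \in \{0,1\}$. Since $\{l(\alpha^j)\} = \{z^1,\ldots,z^4\}$ as multisets and the $z^j$ are pairwise distinct, after relabelling the $\alpha^j$ so that $l(\alpha^j) = z^j$ the high bits are prescribed; the condition $l(x_0) \oplus \cdots \oplus l(x_n) = 0$ holds automatically from the even weight of $z^j$, so $\alpha^j$ lies in the graph of $f$ precisely when
\[
b^j_0 \oplus b^j_1 \oplus \cdots \oplus b^j_n = \lambda(\overline{z^j}), \qquad j=1,2,3,4,
\]
which I will call the row equation for $j$.

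Next I would isolate the column constraints coming from the transversal property. For each coordinate $i$ the partition of $\{1,2,3,4\}$ determined by $z^{\cdot}_i$ has two pairs $\{p,q\}$ with $z^p_i = z^q_i = 0$ and $\{r,s\}$ with $z^r_i = z^s_i = 1$; the requirement $\{a^1_i,\ldots,a^4_i\} = \{0,1,2,3\}$ is equivalent to $\{b^p_i, b^q_i\} = \{b^r_i, b^s_i\} = \{0,1\}$, which I parametrise by $(u_i, v_i) \in \mathbb{Z}_2^2$ via $b^p_i = u_i$, $b^q_i = 1 \oplus u_i$, $b^r_i = v_i$, $b^s_i = 1 \oplus v_i$. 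Writing $S_k(j) = \{i : z^j_i = k\}$ and substituting into the row equations yields the affine $\mathbb{Z}_2$-linear system
\[
\sum_{i \in S_0(j)} u_i \;+\; \sum_{i \in S_1(j)} v_i \;=\; \mu_j, \qquad j=1,2,3,4,
\]
in the $2(n+1)$ unknowns $\{u_i, v_i\}_{i=0}^n$, where the constants $\mu_j$ satisfy $\bigoplus_j \mu_j = \bigoplus_j \lambda(\overline{z^j})$ (the shifts introduced by the parametrisation at each coordinate sum to $0 + 1 + 0 + 1 \equiv 0$ over the four rows).

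Finally I would analyse the rank of this system. Let $V_j \in \mathbb{Z}_2^{2(n+1)}$ denote the coefficient vector of the $j$-th equation. Summing all four equations, at each coordinate $i$ both $u_i$ and $v_i$ appear in exactly two rows, so $V_1 \oplus V_2 \oplus V_3 \oplus V_4 = 0$; consistency therefore forces $\bigoplus_j \mu_j = \bigoplus_j \lambda(\overline{z^j}) = 0$, which proves part $(1)$. The main obstacle will be to show that this is the \emph{only} dependence: the map $z^j \mapsto V_j$ is injective, so brindledness makes the four $V_j$ pairwise distinct, and each is nonzero since $S_0(j) \cup S_1(j) = \{0,\ldots,n\}$. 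Any proper nonempty subsum $V_{j_1} \oplus \cdots \oplus V_{j_k} = 0$ with $k \in \{1,2,3\}$ would, combined with the global relation, force some $V_j$ to vanish or to coincide with another, both impossible. Hence the rank is exactly $3$, the solution space has dimension $2(n+1) - 3 = 2n - 1$, and there are $2^{2n-1} = 2 \cdot 4^{n-1}$ ordered solutions; since the $z^j$ are pairwise distinct, this coincides with the number of unordered transversals, yielding the stated count. The rank-$3$ verification is the single point where brindledness enters; for twin quadruples two of the $V_j$ coincide and the rank drops, which accounts for the different count in Lemma~\ref{twintrans}.
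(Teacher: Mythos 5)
Your proof is correct, and it uses the same underlying decomposition as the paper (split each entry into its high bit, prescribed by $z^j$, and its low bit, constrained column-wise to make the four entries distinct and row-wise to ensure membership in the graph of $f$), but the execution of the count is genuinely different. The paper establishes part (1) by summing the four defining relations of $f$ over a hypothetical transversal and reading off a parity contradiction, and establishes part (2) by first enumerating the two admissible low-bit quadruples in the base case $n=1$, then showing by a local two-coordinate flip that the number of admissible quadruples is the same for all eight parity patterns $\mu$ with $\bigoplus_j \mu(j)=0$, and finally dividing the total $4^{n+1}$ by $8$. You instead package everything as one affine system over $\mathbb{Z}_2$ in the $2(n+1)$ unknowns $(u_i,v_i)$ and compute its rank: the global dependence $V_1\oplus V_2\oplus V_3\oplus V_4=0$ gives the inconsistency criterion of part (1), and the verification that brindledness (pairwise distinctness of the $z^j$, hence of the nonzero vectors $V_j$) rules out any shorter dependence pins the rank at $3$, so the solution space has size $2^{2(n+1)-3}=2\cdot 4^{n-1}$. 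Your rank argument is complete: the three cases $k=1,2,3$ are each excluded correctly, the shift constants do cancel ($0+1+0+1$ per coordinate), and the passage from ordered solutions to unordered transversals is justified by the distinctness of the $z^j$. What your route buys is uniformity (parts (1) and (2) fall out of the same consistency-and-dimension computation, with no base case and no flip bijection) and a transparent explanation of why twin quadruples behave differently (the rank drops); what the paper's route buys is an entirely elementary, enumeration-flavored argument that avoids any rank bookkeeping.
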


This lemma means that for an intercalated latin hypercube $Q$  the number of transversals in  four properly located blocks depends on orientations of latin subhypercubes  in the blocks and does not depend on arrangements of the blocks. Moreover, given the orientation function $\lambda$ and a quadruple of blocks, we can find the number of their transversals.

\begin{proof}
1. Assume that  the quasigroup $f$ has a transversal $\left\{\alpha^1, \alpha^2, \alpha^3, \alpha^4\right\}$ for which $\lambda(l(\overline{\alpha^1})) \oplus \lambda(l(\overline{\alpha^2})) \oplus \lambda(l(\overline{\alpha^3})) \oplus \lambda(l(\overline{\alpha^4})) = 1$.
Consider the sum $$S = \sum\limits_{j=1}^4 a^j_0 \oplus \ldots \oplus a^j_n \oplus \lambda(l(\overline{\alpha^j})).$$
$S$ equals zero because, by the definition of the quasigroup $f$, each term does. On the other hand, equalities $a^1_i \oplus \ldots \oplus a^4_i = 0$ for all $i \in \left\{0,\ldots, n\right\}$ and $\sum\limits_{j=1}^4 \lambda(l(\overline{\alpha^j})) \equiv  1 \mod 2$ imply that 
$$S = \lambda(l(\overline{\alpha^1})) \oplus \lambda(l(\overline{\alpha^2})) \oplus \lambda(l(\overline{\alpha^3})) \oplus \lambda(l(\overline{\alpha^4})) = 1;$$
 a contradiction.

2. We firstly prove that for every function $\mu: \left\{1,2, 3, 4\right\} \rightarrow \mathbb{Z}_2$ such that $\mu(1) \oplus \mu(2) \oplus \mu(3) \oplus \mu(4) = 0$
and for any proper quadruple $\left\{z^1, z^2, z^3, z^4\right\}$ of distinct $(n+1)$-vectors there exist exactly $2 \cdot 4^{n-1}$ proper quadruples $\left\{y^1, y^2, y^3, y^4\right\}$ of Boolean $(n+1)$-vectors satisfying the following conditions:
\begin{description}
\item[(1)] If $z^j_i = z^k_i$ then $y^j_i \neq y^k_i$.
\item[(2)] $y_0^j \oplus y_1^j \oplus \ldots \oplus y^j_n \oplus \mu(j) = 0$ for all $j \in \left\{1, 2, 3, 4 \right\}$.
\end{description}

Hereafter in the lemma we suppose all quadruples to be ordered. Vectors $y^j$ will be used to choose by one element of a future transversal from the blocks of a brindled  quadruple.

If $n=1$ then without loss of generality we may assume that 
$$z^1 = (0,0), ~ z^2= (0,1),  ~ z^3 = (1,0), ~ z^4 = (1,1).$$ 
It is easy to see that for each function $\mu$ there exist exactly two proper quadruples $\left\{y^1, y^2, y^3, y^4\right\}$ satisfying (1),(2). Let us list all such quadruples $\left\{y^1, y^2, y^3, y^4\right\}$ for all main types of the function $\mu$:

\begin{enumerate}
\item $\mu(1) = \mu(2) = \mu(3) = \mu(4) = 0$.
$$y^1 = (0,0), ~ y^2 = (1,1),~  y^3 = (1,1),~ y^4 = (0,0).$$
$$y^1 = (1,1), ~ y^2 = (0,0),~  y^3 = (0,0),~ y^4 = (1,1).$$
\item $\mu(1) = \mu(2) = 0, ~ \mu(3) = \mu(4) = 1$.
$$y^1= (1,1), ~ y^2 = (0,0), ~ y^3 = (1,0),~ y^4 = (0,1).$$
$$y^1 = (0,0), ~ y^2 = (1,1),~  y^3 = (0,1),~ y^4 = (1,0).$$
\item $\mu(1) = \mu(4) = 0, ~ \mu(2) = \mu(3) = 1$.
$$y^1 = (0,0), ~ y^2 = (1,0),~ y^3 = (0,1),~ y^4 = (1,1).$$
$$y^1 = (1,1), ~ y^2 = (0,1),~  y^3 = (1,0),~ y^4 = (0,0).$$
\item $\mu(1) = \mu(2) = \mu(3) = \mu(4) = 1$.
$$y^1 = (0,1), ~ y^2 = (1,0),~ y^3 = (1,0),~ y^4 = (0,1).$$
$$y^1 = (1,0), ~ y^2 = (0,1),~  y^3 = (0,1),~ y^4 = (1,0).$$
\end{enumerate}

If $n \geq 2$ then the number of proper quadruples $\left\{y^1, y^2, y^3, y^4\right\}$ satisfying (1) and (2) also does not depend on the function $\mu$. Indeed, let $\mu$ and $\mu'$ be different functions such that 
$$\mu(1) \oplus \mu(2) \oplus \mu(3) \oplus \mu(4) = \mu'(1) \oplus \mu'(2) \oplus \mu'(3) \oplus \mu'(4) = 0.$$
Assume that $z_{i_1}^k \neq z_{i_2}^k$, $z_{i_1}^k \neq z_{i_1}^j$, and $z_{i_1}^j \neq z_{i_2}^j$. Such indices $i_1, i_2, j,$ and $k$ exist because vectors $z^1, z^2, z^3,$ and $z^4$ are all different. By the case $n=1$, every proper quadruple  $\left\{y^1, y^2, y^3, y^4\right\}$ satisfying (2) with the function $\mu$ can be turned in two ways to a proper quadruple  $\left\{y'^1, y'^2, y'^3, y'^4\right\}$ satisfying (2) with the function $\mu'$ by changing values of all vectors in positions $i_1$ and $i_2$. Note that there are 8 different functions $\mu: \left\{1,2, 3, 4\right\} \rightarrow \mathbb{Z}_2$ such that $\mu(1) \oplus \mu(2) \oplus \mu(3) \oplus \mu(4) = 0$.

Since for a given proper quadruple $\left\{z^1, z^2, z^3, z^4\right\}$ of $(n+1)$-vectors there exist exactly $4^{n+1}$ proper quadruples $\left\{u^1, u^2, u^3, u^4\right\}$ for which  $z^j_i = z^k_i$ implies $u^j_i \neq u^k_i$, we obtain $2 \cdot 4^{n-1}$ proper quadruples $\left\{y^1, y^2, y^3, y^4\right\}$ satisfying (1) and (2) for a quadruple $\left\{z^1, z^2. z^3, z^4\right\}$ and a function $\mu$.

Let now $\left\{z^1, z^2, z^3, z^4\right\}$ be a brindled quadruple of Boolean $(n+1)$-vectors such that $\lambda(\overline{z^1}) \oplus \lambda(\overline{z^2}) \oplus \lambda(\overline{z^3}) \oplus \lambda(\overline{z^4}) = 0$ and $\left\{y^1, y^2, y^3,y^4\right\}$ be a proper quadruple satisfying (1),(2) with $\mu(j) = \lambda(\overline{z^j})$.

Put
$$a^j_i = 2 z^j_i + y^j_i$$
for all $j \in \left\{1, 2, 3, 4\right\}$ and $i \in \left\{0, \ldots, n\right\}$, where $z^j_i$ and $y^j_i$ are Boolean, but addition and multiplication are in $\mathbb{Z}$, and put $\alpha^j = (a_0^j, a_1^j, \ldots, a_n^j)$.

It can be shown by direct calculations that conditions (1),(2) guarantee that $\left\{\alpha^1, \alpha^2, \alpha^3, \alpha^4\right\}$ is a transversal in the quasigroup $f$. For a given transversal $\left\{\beta^1, \beta^2, \beta^3, \beta^4\right\}$ corresponding to the brindled quadruple $\left\{z^1, z^2, z^3, z^4\right\}$ we uniquely determine the quadruple $\left\{y^1, y^2, y^3, y^4\right\}$ satisfying these conditions. Therefore, the number of transversals generated by the brindled quadruple $\left\{z^1, z^2, z^3, z^4\right\}$ is equal to $2 \cdot 4^{n-1}$.
\end{proof}

As a corollary of this lemma we obtain the following characterization for the orientation function $\lambda$ of an intercalated latin hypercube $Q$ with no transversals.

\begin{utv}\label{crit}
Let $n$ be even and let $f$ be a standardly semilinear $n$-ary quasigroup of order $4$ such that 
\begin{gather*}
x_0 = f(x_1, \ldots, x_n) \Leftrightarrow \\ \Leftrightarrow l(x_0) \oplus \ldots \oplus l(x_n) = 0 \mbox{ and } x_0 \oplus \ldots \oplus x_n \oplus \lambda(l(x_1), \ldots, l(x_n)) = 0.
\end{gather*}

The quasigroup $f$ has no transversals if and only if for each brindled quadruple $\left\{z^1, z^2, z^3, z^4\right\}$ it holds $\lambda(\overline{z^1}) \oplus \lambda(\overline{z^2}) \oplus \lambda(\overline{z^3}) \oplus \lambda(\overline{z^4}) = 1.$ 
\end{utv}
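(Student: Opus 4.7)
The plan is to combine Lemma~\ref{worth} with Lemma~\ref{brindtrans}, reducing the existence question to checking which worthwhile quadruples contribute transversals. By Lemma~\ref{worth}, every transversal $\{\alpha^1,\alpha^2,\alpha^3,\alpha^4\}$ in $f$ gives rise to a worthwhile quadruple $\{l(\alpha^1),l(\alpha^2),l(\alpha^3),l(\alpha^4)\}$, so the set of all transversals partitions according to the worthwhile quadruple it produces. Since every worthwhile quadruple is either twin or brindled, I only need to analyze these two cases.

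First I would dispose of the twin case. A twin quadruple has the form $\{z,z,\nu(z),\nu(z)\}$ with both $z$ and $\nu(z)$ of even weight. Because $\nu$ flips every coordinate, $w(z)+w(\nu(z))=n+1$, so $w(z)$ and $w(\nu(z))$ have the same parity only when $n+1$ is even, i.e., $n$ is odd. Under the hypothesis $n$ even there are no twin quadruples at all, and hence by Lemma~\ref{worth} every transversal of $f$ must come from a brindled quadruple.

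The proposition then follows from Lemma~\ref{brindtrans} applied termwise. For a brindled quadruple $\{z^1,z^2,z^3,z^4\}$, part~1 of that lemma says that if $\lambda(\overline{z^1}) \oplus \lambda(\overline{z^2}) \oplus \lambda(\overline{z^3}) \oplus \lambda(\overline{z^4}) = 1$, then the quadruple contributes no transversal, whereas part~2 says that if this sum equals $0$, it contributes exactly $2\cdot 4^{n-1}>0$ transversals. Consequently $f$ has no transversal if and only if no brindled quadruple falls under part~2, which is precisely the condition that $\lambda(\overline{z^1}) \oplus \lambda(\overline{z^2}) \oplus \lambda(\overline{z^3}) \oplus \lambda(\overline{z^4}) = 1$ for every brindled quadruple $\{z^1,z^2,z^3,z^4\}$.

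The proof is essentially a bookkeeping argument on top of the previous two lemmas; the only place where one has to be slightly careful is the parity argument ruling out twin quadruples in the even-$n$ case, but this reduces to the one-line observation that $w(z)+w(\nu(z))=n+1$ is odd. I do not anticipate a genuine obstacle.
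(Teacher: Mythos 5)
Your proposal is correct and follows exactly the route the paper intends: the paper presents Proposition~\ref{crit} as an immediate corollary of Lemma~\ref{brindtrans}, relying on Lemma~\ref{worth} and the observation (made explicitly in the text) that twin quadruples exist only for odd $n$. Your write-up simply makes that implicit bookkeeping explicit, including the correct parity argument $w(z)+w(\nu(z))=n+1$ ruling out twin quadruples when $n$ is even.
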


For a deeper insight on transversals in semilinear quasigroups we need to know a behavior of Boolean functions on brindled quadruples. The following lemma serves exactly this purpose. For the sequel we state the lemma in the most general form. We omit its proof here but give it in Appendix.

 \begin{lemma} \label{bool}
Let $\lambda$ be a Boolean function on the $n$-dimensional Boolean hypercube. Suppose that for every brindled quadruple $\left\{z^1, z^2, z^3, z^4\right\}$ of $(n+1)$-vectors it holds $\lambda(\overline{z^1}) \oplus \lambda(\overline{z^2}) \oplus \lambda(\overline{z^3}) \oplus \lambda(\overline{z^4}) = \delta.$
\begin{itemize}
\item If $\delta = 1$ and $n$ is even, then for every $2$-dimensional plane $P$ in the $n$-dimensional Boolean hypercube we have $\sum\limits_{z \in P} \lambda(z) \equiv 1 \mod 2.$ If $n$ is odd then such Boolean function $\lambda$ does not exist. Moreover, for any odd $n$ and for any Boolean function $\lambda$ the sum $\lambda(\overline{z^1}) \oplus \lambda(\overline{z^2}) \oplus \lambda(\overline{z^3}) \oplus \lambda(\overline{z^4})$ equals to $0$ for at least $\frac{1}{6}(4^{n-1} -2^{n-1})$ brindled quadruples. 
\item If $\delta = 0$ and $n$ is even then for every $2$-dimensional plane $P$ in the $n$-dimensional Boolean hypercube it holds $\sum\limits_{z \in P} \lambda(z) \equiv 0 \mod 2.$  If $n$ is odd then for every $2$-dimensional planes $P_1$ and $P_2$ we have $\sum\limits_{z \in P_1} \lambda(z) \equiv \sum\limits_{z \in P_2} \lambda(z) \mod 2.$
\end{itemize}
\end{lemma}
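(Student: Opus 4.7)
The plan is to reformulate the hypothesis geometrically and then apply a $2$-dimensional subspace decomposition.  Every brindled quadruple $\{z^1,\dots,z^4\}$ of $(n+1)$-vectors is characterised by its four distinct projections $\overline{z^j}$: these form an affine $2$-dimensional subspace $v+V\subset\mathbb F_2^n$ with $V$ \emph{full} (projecting onto every coordinate) and \emph{non-even} (containing a vector of odd weight), and conversely every such affine subspace lifts uniquely to a brindled quadruple by requiring each $z^j$ to have even weight.  Writing $D_V\lambda(v):=\bigoplus_{u\in V}\lambda(v+u)$, the hypothesis becomes $D_V\lambda\equiv\delta$ for every full non-even $2$-dimensional subspace $V\subset\mathbb F_2^n$ and every $v\in\mathbb F_2^n$.

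The central algebraic identity is as follows.  Fix distinct $i,j$, put $u=\mathbf{1}-e_i-e_j$, and let $W=\mathrm{span}(e_i,e_j)$, $V_1=\mathrm{span}(e_i,e_j+u)$, $V_2=\mathrm{span}(e_j,e_i+u)$, $V_3=\mathrm{span}(e_i+u,e_j+u)$.  A direct count shows that each non-zero vector of $V_1\cup V_2\cup V_3\cup W$ appears in exactly two of these subspaces, hence
\[
D_{V_1}+D_{V_2}+D_{V_3}+D_W\equiv 0\quad\text{as functions on }\mathbb F_2^n.
\]
Each $V_\ell$ is full; $V_1$ and $V_2$ are non-even because they contain $e_i$ or $e_j$; the basis vectors of $V_3$ have weight $n-1$, so $V_3$ is non-even precisely when $n$ is even.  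Thus for $n$ even the hypothesis gives $D_{V_\ell}\lambda\equiv\delta$ for $\ell=1,2,3$, and the identity yields $D_W\lambda\equiv 3\delta\equiv\delta\pmod 2$, which is the first assertion of each bullet.

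For $n$ odd I instead use the family $V_b=\mathrm{span}(\mathbf{1},b)$, $b\notin\{0,\mathbf{1}\}$; these are full (thanks to $\mathbf{1}$) and non-even (since $w(\mathbf{1})=n$ is odd).  With $\lambda'(v):=\lambda(v)+\lambda(v+\mathbf{1})$ the hypothesis rewrites as $\lambda'(v)+\lambda'(v+b)=\delta$ for all $v$ and all such $b$.  If $\delta=1$, pick $b_1,b_2,b_1+b_2\notin\{0,\mathbf{1}\}$ (available for $n\geq 3$); the constraints at $(0,b_1)$, $(b_1,b_2)$ and $(0,b_1+b_2)$ force $\lambda'(0)+\lambda'(b_1+b_2)$ to equal both $0$ (sum of the first two) and $1$ (the third), a contradiction.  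For the quantitative claim I bound $\mu(b):=\#\{v:\lambda'(v)=\lambda'(v+b)\}$: with $N(c):=\#\{w:\lambda'(w)=c\}$, convexity gives $\sum_b\mu(b)=N(0)^2+N(1)^2\geq 2^{2n-1}$, and after subtracting the trivial $\mu(0)=\mu(\mathbf{1})=2^n$ and dividing by the $8$-fold overcounting (two values of $b$ per subspace $V_b$, four translates of $v$ per coset), at least $4^{n-2}-2^{n-2}$ brindled quadruples of this family have $\lambda$-sum $0$, which is $\geq\tfrac16(4^{n-1}-2^{n-1})$ for all odd $n\geq 3$.

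When $\delta=0$ and $n$ is odd, the same family yields $\lambda'(v)=\lambda'(v+b)$ for every $b\neq 0,\mathbf{1}$; as these $b$'s generate $\mathbb F_2^n$, $\lambda'\equiv c$ is constant, and substituting $\lambda(v+\mathbf{1})=\lambda(v)+c$ shows $D_W\lambda(v+\mathbf{1})=D_W\lambda(v)$ for every coordinate $2$-plane $W$.  Showing that $D_W\lambda$ is the \emph{same} constant across all coordinate $2$-planes is the main obstacle of the lemma.  The case $n=3$ is immediate because $W\oplus\langle\mathbf{1}\rangle=\mathbb F_2^3$ collapses $D_W\lambda$ to a single value, reducing the claim to an elementary calculation; for $n\geq 5$ one exploits full non-even subspaces $V^\ast=\{0,a,b,a+b\}$ with $\mathbf{1}\notin V^\ast$ (easily constructed, e.g.\ $a=e_1+e_2+e_3+e_4$, $b=e_1+e_2+e_5$ for $n=5$), rewrites $D_{V^\ast}\lambda\equiv 0$ modulo $\lambda'\equiv c$ to obtain linear relations among shifts of $\lambda$ by weight-$\leq 2$ vectors, and combines these with the chain-rule formula $D_{\mathrm{span}(e_i,e_j+e_l)}\lambda(v)=D_{\mathrm{span}(e_i,e_j)}\lambda(v)+D_{\mathrm{span}(e_i,e_l)}\lambda(v+e_j)$ to force $D_W\lambda-D_{W'}\lambda\equiv 0$ for arbitrary coordinate $2$-planes $W,W'$.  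Assembling the resulting linear system cleanly is the step I anticipate will require the most care.
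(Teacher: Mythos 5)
Your reformulation of brindled quadruples as cosets $v+V$ of $2$-dimensional subspaces $V\subset\mathbb F_2^n$ that are full and non-even is correct, and the four-subspace identity $D_{V_1}+D_{V_2}+D_{V_3}+D_W\equiv 0$ (each nonzero vector lying in exactly two of the subspaces) is a genuinely nicer route to both even-$n$ assertions than the paper's, which treats $\delta=0$ and $\delta=1$ separately by explicit vector manipulations. The $\delta=1$, $n$ odd non-existence and the quantitative count are also fine: your Cauchy--Schwarz bound $4^{n-2}-2^{n-2}$ over the family $V_b=\mathrm{span}(\mathbf 1,b)$ dominates the required $\tfrac16(4^{n-1}-2^{n-1})$ for all odd $n\ge 3$, whereas the paper gets the stated constant by partitioning these quadruples into six classes and observing that the six sums $\Lambda_i$ satisfy three linear relations forcing at least two of them to vanish.

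The genuine gap is the last claim: $\delta=0$, $n$ odd, all $2$-dimensional coordinate planes have the same parity of sum. You establish only that $\lambda(v+\mathbf 1)=\lambda(v)+c$ and hence that $D_W\lambda$ is $\nu$-invariant; the assertion that $D_W\lambda$ is a single constant over all $v$ and all coordinate directions $W$ is exactly the hard part, and you explicitly leave the "linear system" unassembled. The difficulty is real: since $e_i$ has weight $1$, the only full subspaces containing $e_i$ are $\mathrm{span}(e_i,\mathbf 1)$, so no single application of the chain rule $D_{\mathrm{span}(a,b+b')}\lambda(v)=D_{\mathrm{span}(a,b)}\lambda(v)+D_{\mathrm{span}(a,b')}\lambda(v+b)$ reaches a coordinate plane from hypothesis subspaces, and the symmetric double decomposition $e_i=a+a'$, $e_j=b+b'$ always leaves one of the four resulting subspaces even (a parity obstruction), so it only transports the problem rather than solving it. The paper closes this by a different device: assuming two coordinate planes $P_1=\{a^1,a^2,a^3,a^4\}$ and $P_2=\{a^1,a^2,a^5,a^6\}$ sharing an edge have different sums, it applies the hypothesis to the four brindled quadruples $\{a^3,a^6,\nu(a^3),\nu(a^6)\}$, $\{a^3,a^6,\nu(a^4),\nu(a^5)\}$, $\{a^4,a^5,\nu(a^3),\nu(a^6)\}$, $\{a^4,a^5,\nu(a^4),\nu(a^5)\}$, which force $\lambda(a^3)\oplus\lambda(a^6)=\lambda(a^4)\oplus\lambda(a^5)$, contradicting $\lambda(a^3)\oplus\lambda(a^4)\oplus\lambda(a^5)\oplus\lambda(a^6)=1$; a connectivity argument then extends this to arbitrary pairs of planes. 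You would need to supply an argument of comparable substance (your $n=3$ observation does work, precisely because $\mathrm{span}(e_i,e_j+e_k)$ is full only when $n=3$), so as it stands the proof of the final bullet for odd $n\ge 5$ is missing.
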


As a corollary of this lemma and Lemmas~\ref{twintrans} and~\ref{brindtrans} we obtain a more exact lower bound on the number of transversals in semilinear quasigroups of odd arity, and consequently obtain Theorem~\ref{lowsemioddlatin}.
\begin{sled}
Let $f$ be a semilinear $n$-ary quasigroup of order $4$ where $n$ is odd. Then $f$ has at least $\frac{1}{3} (16^{n-1} + 2 \cdot 8^{n-1})$ transversals.
\end{sled}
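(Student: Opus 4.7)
The plan is to reduce to the standardly semilinear case and then split the transversals of $f$ according to whether the induced worthwhile quadruple is twin or brindled, counting each family via a lemma already established. Since isotopies preserve the number of transversals, I may assume from the outset that $f$ is standardly semilinear with some orientation function $\lambda:\mathbb Z_2^n\to\mathbb Z_2$. By Lemma~\ref{worth}, every transversal $\{\alpha^1,\alpha^2,\alpha^3,\alpha^4\}$ of $f$ projects under $l$ to a worthwhile quadruple, which is either twin or brindled, so these two families partition the set of all transversals and the bound will be obtained by summing the two contributions.

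For the twin part, Lemma~\ref{twintrans} applies directly: since $n$ is odd, twin quadruples exist and together produce exactly $8^{n-1}$ transversals. For the brindled part, Lemma~\ref{brindtrans} tells me that a brindled quadruple $\{z^1,z^2,z^3,z^4\}$ contributes zero transversals when $\lambda(\overline{z^1})\oplus\lambda(\overline{z^2})\oplus\lambda(\overline{z^3})\oplus\lambda(\overline{z^4})=1$ and exactly $2\cdot 4^{n-1}$ transversals when this XOR equals $0$. Thus the brindled contribution equals $2\cdot 4^{n-1}\cdot N_\lambda$, where $N_\lambda$ is the number of brindled quadruples on which $\lambda$ XOR-sums to $0$.

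The key input is Lemma~\ref{bool}, which in the odd-$n$ regime guarantees $N_\lambda\geq \tfrac{1}{6}(4^{n-1}-2^{n-1})$ for every Boolean function $\lambda$ on $\mathbb Z_2^n$. Combining everything yields
\begin{align*}
T(f) &\geq 8^{n-1} + 2\cdot 4^{n-1}\cdot \tfrac{1}{6}(4^{n-1}-2^{n-1}) \\
&= 8^{n-1} + \tfrac{1}{3}\bigl(16^{n-1}-8^{n-1}\bigr) \\
&= \tfrac{1}{3}\bigl(16^{n-1} + 2\cdot 8^{n-1}\bigr),
\end{align*}
which is exactly the claimed bound. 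The only nontrivial ingredient is Lemma~\ref{bool}, whose combinatorial estimate on brindled quadruples is engineered so that the arithmetic closes cleanly; given that lemma, the corollary itself is a short bookkeeping step with no real obstacle, and Theorem~\ref{lowsemioddlatin} is just the latin-hypercube restatement.
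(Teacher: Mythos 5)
Your proof is correct and takes essentially the same approach as the paper: the paper gives no written proof for this corollary, stating only that it follows from Lemmas~\ref{twintrans}, \ref{brindtrans}, and~\ref{bool}, and your twin/brindled decomposition with the count $8^{n-1} + 2\cdot 4^{n-1}\cdot\tfrac{1}{6}(4^{n-1}-2^{n-1})$ is exactly that intended argument, with the arithmetic closing correctly.
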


We are also ready to state the following result.

\begin{utv}\label{semi}
An $n$-ary semilinear quasigroup $f$ of order $4$ has no transversals if and only if $n$ is even and $f$ is a $\mathbb{Z}_4$-linear quasigroup.
\end{utv}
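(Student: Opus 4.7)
The plan is to reduce Proposition~\ref{semi} to a chain of three criteria already established in the paper. Since the number of transversals is isotopy-invariant and both semilinearity and $\mathbb{Z}_4$-linearity are defined up to isotopy, I may assume throughout that $f$ is itself standardly semilinear with orientation function $\lambda:\mathbb{Z}_2^n\to\mathbb{Z}_2$; the whole argument then becomes a statement about~$\lambda$.

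For the forward direction, suppose $f$ is semilinear and has no transversal. Corollary~\ref{semiodd} guarantees at least $8^{n-1}$ transversals whenever $n$ is odd, so $n$ must be even. With $n$ even, Proposition~\ref{crit} is an if-and-only-if, so the absence of transversals forces $\lambda(\overline{z^1})\oplus\lambda(\overline{z^2})\oplus\lambda(\overline{z^3})\oplus\lambda(\overline{z^4})=1$ for every brindled quadruple $\{z^1,z^2,z^3,z^4\}$ of $(n+1)$-vectors. Feeding this hypothesis into the $\delta=1$, $n$ even branch of Lemma~\ref{bool} yields $\sum_{z\in P}\lambda(z)\equiv 1\pmod 2$ for every $2$-dimensional plane $P$ in $\mathbb{Z}_2^n$, and the iff in Lemma~\ref{z4char}(3) then identifies $f$ as a $\mathbb{Z}_4$-linear quasigroup.

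For the reverse direction, assume $n$ is even and $f$ is $\mathbb{Z}_4$-linear. Then $f$ is isotopic to the $n$-ary iterated group $\mathbb{Z}_4$, which has no transversals whenever both the arity and the order are even (the classical parity obstruction recalled after Conjecture~\ref{hypwan}). Since isotopy preserves the transversal count, $f$ has no transversals either. I do not foresee a genuine obstacle here: the main work has been front-loaded into Proposition~\ref{crit} and Lemma~\ref{bool}, and once those are in hand the argument is essentially bookkeeping. The only mild subtlety is keeping the indexing straight — namely, that $\lambda$ lives on $\mathbb{Z}_2^n$ while brindled quadruples are made of $(n+1)$-vectors — so that the $n$ appearing in Lemma~\ref{bool} and in Lemma~\ref{z4char} matches the arity of $f$ used in Proposition~\ref{crit}.
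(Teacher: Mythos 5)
Your argument is correct and follows the paper's proof essentially verbatim: Corollary~\ref{semiodd} forces $n$ even, and the chain Proposition~\ref{crit} $\to$ Lemma~\ref{bool} (the $\delta=1$, $n$ even branch) $\to$ Lemma~\ref{z4char}(3) identifies $f$ as $\mathbb{Z}_4$-linear. Your handling of the converse via the classical parity obstruction for the iterated group $\mathbb{Z}_4$ of even arity is a perfectly valid (and slightly more explicit) way to cover a direction the paper's one-line proof leaves implicit.
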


\begin{proof}
Corollary~\ref{semiodd} implies that $n$ is even. The uniqueness of $\mathbb{Z}_4$-linear quasigroups as quasigroups without transversals among all semilinear quasigroups of even arity follows from Proposition~\ref{crit}, Lemma~\ref{bool}, and Lemma~\ref{z4char}.
\end{proof}

\section{Transversals in multiary quasigroups of order 4}

Having the lower bound on the number of transversals in semilinear quasigroups of odd arity and the characterization of semilinear quasigroups of even arity without transversals, we are ready to prove the similar results for general multiary quasigroups of order 4. We start with a lower bound on the number of transversals in quasigroups of order 4 and odd arity. As a corollary of this bound we have Theorem~\ref{oddnlatin}.

\begin{teorema} \label{oddn}
Let $f$ be an $n$-ary quasigroup of order $4$ where $n$ is odd. Then $f$ has at least $8^{n-1}$ transversals.
\end{teorema}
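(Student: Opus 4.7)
The plan is to combine Theorem~\ref{krpt} with the two composition lemmas. By Theorem~\ref{krpt}, $f$ is either semilinear or reducible. In the semilinear branch, Corollary~\ref{semiodd} already delivers $T(f) \geq 8^{n-1}$, so only the reducible branch requires real work.

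I would treat the reducible branch by induction on odd $n \geq 1$, with the trivial base $n=1$ giving exactly one transversal, matching $8^0 = 1$. For the inductive step, express $f$ as a composition of an $(m+1)$-ary quasigroup $g$ and an $(n-m)$-ary quasigroup $h$, both of arity at least $2$. The key parity remark is that $(m+1)+(n-m) = n+1$ is even, so $m+1$ and $n-m$ share the same parity; this splits the argument into two subcases.

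If both $m+1$ and $n-m$ are odd, then applying the inductive hypothesis to $g$ and $h$ (each of order $4$ and smaller odd arity) yields $T(g) \geq 8^{m}$ and $T(h) \geq 8^{n-m-1}$, and Lemma~\ref{quasi1} immediately gives $T(f) \geq 8^{n-1}$. If instead both arities are even, I would fix an arbitrary $a \in \Sigma_4$ and apply the inductive hypothesis to the derived odd-arity quasigroups $g^a$ (arity $m$) and $h^a$ (arity $n-m-1$), obtaining $T(g^a) \geq 8^{m-1}$ and $T(h^a) \geq 8^{n-m-2}$. A single application of Lemma~\ref{quasi2} only yields $24 \cdot 8^{n-3}$, which falls short of $8^{n-1}$; the crucial move is to sum the construction over all four values of $a \in \Sigma_4$. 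The transversals produced from distinct values of $a$ are disjoint because every constructed transversal has all four of its points on the common level set $\{g = h = a\}$, and $a$ is determined by any single point. This buys the missing factor of $q=4$ and produces
\[ T(f) \ge q \cdot q! \cdot T(g^a)\, T(h^a) \ge 4 \cdot 24 \cdot 8^{n-3} = \tfrac{3}{2} \cdot 8^{n-1}. \]

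The only delicate point in the plan is this last subcase. The rest of the argument is a direct unwinding of Theorem~\ref{krpt} together with Lemmas~\ref{quasi1} and~\ref{quasi2}; the main obstacle to anticipate is ensuring that aggregating Lemma~\ref{quasi2} over $a$ genuinely counts distinct transversals, which is where the parity accounting pays off.
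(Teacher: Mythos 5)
Your proposal is correct and follows essentially the same route as the paper: Theorem~\ref{krpt} splits into the semilinear case (handled by Corollary~\ref{semiodd}) and the reducible case, which is done by induction with Lemma~\ref{quasi1} when both arities are odd and Lemma~\ref{quasi2} aggregated over all four values of $a$ (yielding the factor $q\cdot q! = 96$, so $96\cdot 8^{n-3} > 8^{n-1}$) when both are even. The paper writes the constant $96$ directly where you spell out the disjointness of the transversals coming from distinct $a$; that justification is the right one and matches how the same aggregation is used in Theorem~\ref{compred}.
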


\begin{proof}
The proof is by induction on $n$. When $n=1$ there is nothing to prove.

Let $n$ be odd greater than 1. By Theorem~\ref{krpt}, every $n$-ary quasigroup of order 4 is semilinear or reducible. If $f$ is a semilinear quasigroup then, by Corollary~\ref{semiodd}, it has at least $8^{n-1}$ transversals.

If $f$ is a reducible quasigroup then for $1 \leq m \leq n-2$ there exist an $(n-m)$-ary quasigroup $h$, an $(m+1)$-ary quasigroup $g$, and a permutation $\sigma \in S_{n+1}$ such that 
$$f(x_1, \ldots , x_n) = x_0 \Leftrightarrow g(x_{\sigma(0)}, x_{\sigma(1)}, \ldots , x_{\sigma(m)}) = h(x_{\sigma(m+1)}, \ldots , x_{\sigma(n)}).$$

If $m+1$ and $n-m$ are both odd, then by the inductive assumption, the quasigroups $h$ and $g$ have at least $8^{n-m-1}$ and $8^{m}$ transversals respectively. By Lemma~\ref{quasi1},
$$T(f) \geq T(h)T(g) \geq 8^{n-1}.$$

If $m+1$ and $n-m$ are both even, then for every $a \in \Sigma_4$ we consider the $(n-m-1)$-ary quasigroup $h^a$  defined by the equation $h(x_{m+1}, \ldots, x_n) = a$ and the $m$-ary quasigroup $g^a$ defined by the equation $g(x_0, x_1, \ldots, x_m) = a$. By the inductive assumption, the quasigroups $h^a$ and $g^a$ have at least $8^{n-m-2}$ and $8^{m-1}$ transversals respectively. Therefore Lemma~\ref{quasi2} implies
$$T(f) \geq 96 \cdot T(h^a) T(g^a) \geq 96 \cdot 8^{n-3} > 8^{n-1}.$$
\end{proof}

To prove the next theorem we need one more lemma and several new concepts. An isotopy $(\sigma_0, \sigma_1, \ldots, \sigma_n)$ between $n$-ary quasigroups $f_1$ and $f_2$ is called \textit{principal} if $\sigma_0$ is the identical permutation. 
We will say that quasigroups $f_1$ and $f_2$ are \textit{principally isotopic} if there exists a principle isotopy between them. In other words, quasigroups $f_1$ and $f_2$ are principally isotopic if and only if
$$f_1(x_1, \ldots, x_n) \equiv f_2(\sigma_1(x_1), \ldots, \sigma_n(x_n)).$$
Note that an existence of a principal isotopy between quasigroups $f_1$ and $f_2$ is equivalent to that latin hypercubes $Q(f_1)$ and $Q(f_2)$ can be turned to each other by permutations of hyperplanes but without permutations on symbols.

Since there are isotopic quasigroups  of order 4 that are not principally isotopic, we divide a set of isotopic quasigroups into \textit{principal classes} that are closed under principle isotopies.  For example, all binary $\mathbb{Z}^2_2$-linear quasigroups are in the same principal class but there exist binary $\mathbb{Z}_4$-linear quasigroups belonging different principal classes.  

The notion of principal classes of isotopic quasigroups is needed to mark off $n$-ary $\mathbb{Z}_4$-linear quasigroups from other quasigroups being composed of binary $\mathbb{Z}_4$-linear quasigroups.  For example, consider the following two 3-dimensional latin hypercubes of order $4$ represented by layers:

$$  \begin{array} {cccc}
0 & 1 & 2 & 3 \\ 1 & 0 & 3 & 2 \\ 2 & 3 & 1 & 0 \\ 3 & 2 & 0 & 1
\end{array}
\times
\begin{array} {cccc}
1 & 0 & 3 & 2 \\ 0 & 1 & 2 & 3 \\ 3 & 2 & 0 & 1 \\ 2 & 3 & 1 & 0
\end{array}
\times
\begin{array} {cccc}
2 & 3 & 1 & 0 \\ 3 & 2 & 0 & 1 \\ 1 & 0 & 3 & 2 \\ 0 & 1 & 2 & 3
\end{array}
\times
\begin{array} {cccc}
3 & 2 & 0 & 1 \\ 2 & 3 & 1 & 0 \\ 0 & 1 & 2 & 3 \\ 1 & 0 & 3 & 2
\end{array}
$$
and
$$ \begin{array} {cccc}
0 & 1 & 2 & 3 \\ 1 & 0 & 3 & 2 \\ 2 & 3 & 1 & 0 \\ 3 & 2 & 0 & 1
\end{array}
\times
\begin{array} {cccc}
1 & 0 & 3 & 2 \\ 2 & 3 & 1 & 0 \\ 3 & 2 & 0 & 1 \\ 0 & 1 & 2 & 3
\end{array}
\times
\begin{array} {cccc}
2 & 3 & 1 & 0 \\ 3 & 2 & 0 & 1 \\ 0 & 1 & 2 & 3 \\ 1 & 0 & 3 & 2
\end{array}
\times
\begin{array} {cccc}
3 & 2 & 0 & 1 \\ 0 & 1 & 2 & 3 \\ 1 & 0 & 3 & 2 \\ 2 & 3 & 1 & 0
\end{array}.
$$

Both latin hypercubes are the Cayley tables of quasigroups that are compositions of two binary $\mathbb{Z}_4$-linear quasigroups, but they belong to different main classes and have different numbers of transversals. The former is the Cayley table of a 3-ary $\mathbb{Z}_4$-linear quasigroup, and the latter is not.

From the definitions it is easy to see that an $n$-ary quasigroup with $n\geq 3$ defined by the equation
$$((\ldots((x_1 + x_2) +' x_3) \ldots +^{(n-2)} x_{n-1}) +^{(n-1)} x_n) = x_0,$$
where $+^{(i)}$ are operations of some binary quasigroups, is $\mathbb{Z}_4$-linear only if all of these operations define quasigroups that belong to the same principle class of binary $\mathbb{Z}_4$-linear quasigroups.

Let us state the following auxiliary lemma.

\begin{lemma} \label{almostz4}
Let $f$ be an $n$-ary quasigroup of order $4$.
Suppose for all $a \in \Sigma_4$  the quasigroups defined by the equations $f(x_1, \ldots, x_n) = a$ are $\mathbb{Z}_4$-linear. Then the quasigroup $f$ is isotopic to 
$$(x_1 + \ldots + x_{n-1}) +' x_n  = x_0 \mbox{ or } (x_1 + \ldots + x_{n-1}) \oplus x_{n} = x_0,$$
where $+$ and $+'$ are operations in (possibly the same) principal classes of binary $\mathbb{Z}_4$-linear quasigroups, and $\oplus$ is a $\mathbb{Z}_2^2$-addition. 
\end{lemma}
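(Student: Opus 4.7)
The plan is to show $f$ admits a decomposition of the form $f(x_1, \ldots, x_n) = h_2(h_1(x_1, \ldots, x_{n-1}), x_n)$, up to isotopy and (if needed) a parastrophe relabeling the singled-out input as $x_n$ (which the symmetry of the hypothesis in $x_1, \ldots, x_n$ permits), with $h_1$ a $\mathbb{Z}_4$-linear $(n-1)$-ary quasigroup and $h_2$ a binary quasigroup. Once this is established, identifying the possible forms of $h_1$ and $h_2$ will yield the two cases in the conclusion.

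The decisive step is to establish this decomposition. By Theorem~\ref{krpt}, $f$ is either reducible or semilinear. In the reducible case, a repeated application of Lemma~\ref{pred} lets us peel off binary quasigroups until we obtain the desired form, provided $f$ is completely reducible; complete reducibility itself should follow from the hypothesis by an inductive argument, since every $f^a$ is completely reducible. In the semilinear case, I would translate the hypothesis into parity conditions on the orientation function $\lambda: \mathbb{Z}_2^n \rightarrow \mathbb{Z}_2$ of $f$: a direct calculation (using that $f^a$'s orientation function equals $\lambda$ restricted to the $2$-planes of the form $\{(z_1, \ldots, z_{n-1}, z_1 \oplus \ldots \oplus z_{n-1} \oplus l(a))\}$, up to the additive constant given by the low bit of $a$) shows that $\mathbb{Z}_4$-linearity of $f^a$ is equivalent to $\sum_{z \in \tilde{P}} \lambda(z) \equiv 1 \pmod 2$ over specific families of $2$-planes $\tilde{P}$ in $\mathbb{Z}_2^n$. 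Invoking Lemma~\ref{bool} and the characterizations in Lemmas~\ref{z4char}(3) and~\ref{z22char}(3) then forces $\lambda$ to correspond to a reducible quasigroup of the required form.

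Once the decomposition $f = h_2(h_1(x_1, \ldots, x_{n-1}), x_n)$ is in hand, identifying $h_1$ is straightforward. For each $a \in \Sigma_4$, writing $y = h_1(x_1, \ldots, x_{n-1})$ and $\psi_a(y)$ for the unique $x_n$ satisfying $h_2(y, x_n) = a$ gives $f^a(x_1, \ldots, x_{n-1}) = \psi_a(h_1(x_1, \ldots, x_{n-1}))$. Thus $f^a$ is isotopic to $h_1$ via the output permutation $\psi_a$. Since $\mathbb{Z}_4$-linearity is preserved under isotopy, the hypothesis forces $h_1$ to be $\mathbb{Z}_4$-linear, so $h_1$ is isotopic to $x_1 + \cdots + x_{n-1}$ with $+$ in some principal class of binary $\mathbb{Z}_4$-linear quasigroups.

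Identifying $h_2$ is the quick final step. As a binary quasigroup of order $4$, $h_2$ cannot be reducible (reducibility needs both factors of arity at least $2$), so by Theorem~\ref{krpt} it is semilinear, hence isotopic to a standardly semilinear binary quasigroup. Since $\mathbb{Z}_2^2$ admits only one $2$-plane (the full hypercube itself), Lemmas~\ref{z4char}(3) and~\ref{z22char}(3) together show every such standardly semilinear binary quasigroup is either $\mathbb{Z}_4$-linear or $\mathbb{Z}_2^2$-linear, producing the two possibilities for the outer operation in the conclusion. The main obstacle is the semilinear subcase of the decomposition step, where the combinatorial analysis of $\lambda$'s parity on a carefully chosen family of $2$-planes must be executed in detail.
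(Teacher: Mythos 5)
Your end-game is fine and matches what the paper treats as the easy part: once $f$ is known to be completely reducible, peeling off one leaf via Lemma~\ref{pred}, observing that each $f^a$ is isotopic to the $(n-1)$-ary factor $h_1$ (so $h_1$ is $\mathbb{Z}_4$-linear), and classifying the external binary quasigroup as $\mathbb{Z}_4$- or $\mathbb{Z}_2^2$-linear is exactly the intended argument. The genuine gap is the step you defer: establishing that $f$ is completely reducible in the first place. You write that this ``should follow from the hypothesis by an inductive argument, since every $f^a$ is completely reducible,'' but that implication is precisely the content of the lemma as the paper reformulates it, and it is not a routine induction. Knowing that the codimension-one retracts $f^a$ (obtained by fixing the \emph{output} variable) are completely reducible gives, by itself, no decomposition of $f$: reducibility is not inherited upward from retracts. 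The paper has to invoke a nontrivial structural theorem from the Krotov--Potapov theory (Proposition~\ref{compredvsp}: if \emph{all} retracts obtained by fixing between $1$ and $n-2$ of the $n+1$ variables are reducible, then $f$ is completely reducible), verify that every such retract is reducible by distinguishing whether the output position is among the fixed ones, and handle the base cases $n\leq 5$ by the computational census of~\cite{wancec}. Your induction has neither this bridge from retracts to $f$ nor a base case.

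Your fallback for the semilinear branch is also only a sketch, and it has a technical flaw as stated: the blocks of $f^a$ are indexed not by coordinate $2$-planes of $\mathbb{Z}_2^n$ but by points of the parity hyperplane $\{z: z_1\oplus\cdots\oplus z_n = l(a)\}$, and the $2$-planes relevant to Lemma~\ref{z4char}(3) for $f^a$ correspond to $2$-dimensional \emph{affine} flats inside that hyperplane, not to the coordinate $2$-planes that Lemmas~\ref{z4char}(3), \ref{z22char}(3) and~\ref{bool} speak about. Transporting the parity conditions across that identification is exactly the ``combinatorial analysis \ldots executed in detail'' that you acknowledge is missing, and nothing in the quoted lemmas does it for you. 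So the proposal correctly identifies the target decomposition and correctly finishes from it, but the decisive reduction --- from ``all $f^a$ are $\mathbb{Z}_4$-linear'' to ``$f$ is completely reducible'' --- is asserted rather than proved, and in the paper it rests on an external theorem plus computer verification that your argument does not replace.
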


For a proof of the lemma the reader is referred to Appendix. Further for a binary operation $+$ defining a quasigroup of order 4 we denote by $+^{-1}$ its \textit{right inverse}:
$$x_1 + x_2 = x_0 \Leftrightarrow x_1 = x_0 +^{-1} x_2.$$
It is easy to see that $\oplus^{-1} = \oplus$.

Now we are ready to prove the characterization of $n$-ary quasigroups of order 4 having no transversals which is equivalent to Theorem~\ref{harzerolatin}.

\begin{teorema} \label{harzero}
Let $f$ be an $n$-ary quasigroup of order $4$ without transversals. Then $n$ is even and $f$ is a $\mathbb{Z}_4$-linear quasigroup.
\end{teorema}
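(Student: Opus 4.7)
The plan is to argue by induction on $n$. The parity is forced right away: if $n$ were odd, Theorem~\ref{oddn} would give at least $8^{n-1}\ge 1$ transversals, contradicting the hypothesis, so $n$ must be even. For the base case $n=2$, Theorem~\ref{krpt} forces the binary $f$ to be semilinear (there is no way to write a $2$-ary quasigroup as a composition with both pieces of arity $\ge 2$), and Proposition~\ref{semi} concludes that $f$ is $\mathbb{Z}_4$-linear.

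For the inductive step, assume the result for all smaller even arities and take $n\ge 4$ even. Theorem~\ref{krpt} splits into two cases. If $f$ is semilinear, Proposition~\ref{semi} directly identifies $f$ as $\mathbb{Z}_4$-linear. So suppose $f$ is reducible, written as a composition of an $(n-m)$-ary $h$ and an $(m+1)$-ary $g$ with $1\le m\le n-2$. Lemma~\ref{quasi1} combined with the no-transversal hypothesis forces, without loss of generality, $h$ to have no transversal; Theorem~\ref{oddn} then makes $n-m$ even (hence $m$ is even and at least $2$), and the inductive hypothesis yields that $h$ is $\mathbb{Z}_4$-linear. For each $a\in\Sigma_4$, Lemma~\ref{quasi2} together with Theorem~\ref{oddn} applied to $h^a$ (of odd arity $n-m-1$, which therefore has a transversal) forces $g^a$ to have no transversal, and induction gives that each $g^a$ is $\mathbb{Z}_4$-linear. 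Lemma~\ref{almostz4} applied to $g$ then places $g$, up to isotopy, in one of the two shapes
$$(x_1+\cdots+x_m)+'x_{m+1}=x_0\quad\text{or}\quad(x_1+\cdots+x_m)\oplus x_{m+1}=x_0,$$
with $+,+'$ binary $\mathbb{Z}_4$-linear operations and $\oplus$ the $\mathbb{Z}_2^2$-addition.

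The crux, and the main obstacle, is to exclude any shape for $g$ that would leave $f$ not $\mathbb{Z}_4$-linear: the $\oplus$-shape entirely, and any $+'$-shape in which $+$, $+'$, or the outermost binary piece implicit in $h$ sit in different principal classes of binary $\mathbb{Z}_4$-linear quasigroups. In every such mismatched sub-case I intend to construct a transversal of $f$ explicitly, contradicting the hypothesis. The guiding idea is to rewrite the defining equation so that the condition $f(\tau_1(a),\ldots,\tau_n(a))=a$ for all $a\in\Sigma_4$ collapses to solving $\tau(a)=\phi(a)\oplus\psi(a)$ for one distinguished permutation $\tau$, with $\phi,\psi$ controlled by the remaining $n-1$ permutations; the $\mathbb{Z}_2^2$-versus-$\mathbb{Z}_4$ mismatch (or mismatch of principal classes) supplies enough freedom in the free $\tau_i$ to tune $\phi\oplus\psi$ into a permutation, producing the transversal. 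Once all mismatched sub-cases are excluded, $g$ is $\mathbb{Z}_4$-linear in the same principal class as $h$, the whole composition matches the characterization preceding Lemma~\ref{almostz4}, and $f$ itself is $\mathbb{Z}_4$-linear, closing the induction.
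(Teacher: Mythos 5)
Your setup coincides with the paper's own: the same induction on $n$, the same use of Theorem~\ref{oddn} to force $n$ even, the same dichotomy from Theorem~\ref{krpt} with Proposition~\ref{semi} disposing of the semilinear case, and the same chain of Lemma~\ref{quasi1}, Theorem~\ref{oddn}, Lemma~\ref{quasi2} and Lemma~\ref{almostz4} reducing everything to the question of whether a composition built from binary $\mathbb{Z}_4$-linear operations in mismatched principal classes (or containing a $\mathbb{Z}_2^2$-addition) must have a transversal. But that question is precisely the crux, and you leave it as a plan rather than a proof: ``I intend to construct a transversal explicitly\ldots tune $\phi\oplus\psi$ into a permutation'' is a statement of hope, not an argument. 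Nothing in the proposal shows that the freedom in the remaining $n-1$ permutations suffices to make $\phi(a)\oplus\psi(a)$ a bijection of $\Sigma_4$; for $n\ge 4$ these are nested compositions of several distinct binary operations and the claim is not self-evident. As written, the inductive step is not closed, so there is a genuine gap.

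The paper closes this step without any explicit construction, by a re-association trick you may want to adopt. Starting from, say, $(x_{\sigma(0)} +' \cdots +' x_{\sigma(m-1)}) \oplus x_{\sigma(m)} = x_{\sigma(m+1)} + \cdots + x_{\sigma(n)}$, it moves variables across the equality using the right inverse $+'^{-1}$, rewriting $f$ as a composition of a quasigroup of odd arity and an even-arity quasigroup that is visibly not $\mathbb{Z}_4$-linear (because it mixes $\oplus$ with $+$, or mixes distinct principal classes). The odd-arity factor has a transversal by Theorem~\ref{oddn}, the even-arity factor has one by the inductive hypothesis since it is not $\mathbb{Z}_4$-linear, and Lemma~\ref{quasi1} then produces a transversal of $f$; the same regrouping handles the case where $+'$, $+''$ and $+^{-1}$ fail to lie in a single principal class. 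In other words, the inductive hypothesis already contains all the ``explicit transversals'' you were planning to build by hand --- the missing idea is to regroup the composition so that the hypothesis becomes applicable.
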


\begin{proof}
The proof is by induction on $n$.  Up to equivalence there exist only two binary quasigroups of order 4: the $\mathbb{Z}_4$-linear quasigroup having no transversals and the $\mathbb{Z}_2^2$-linear quasigroup containing 8 transversals.
If $n$ is odd then Theorem~\ref{oddn} implies that an $n$-ary quasigroup of order 4 has a transversal.

Let $n \geq 4$  be even. Suppose that for all $k < n$ among all $k$-ary quasigroups of order 4 only $\mathbb{Z}_4$-linear quasigroups of even arity have no transversals. By Theorem~\ref{krpt}, every $n$-ary quasigroup of order 4 is reducible or semilinear.  For semilinear quasigroups the statement is true by Proposition~\ref{semi}. So we may assume that a quasigroup $f$ with no transversals is reducible that is for some $(m+1)$-ary quasigroup $g$, $(n-m)$-ary quasigroup $h$ (where $1 \leq m \leq n-2$), and some permutation $\sigma \in S_{n+1}$ it holds
$$f(x_1, \ldots , x_n) = x_0 \Leftrightarrow g(x_{\sigma(0)}, x_{\sigma(1)}, \ldots , x_{\sigma(m)}) = h(x_{\sigma(m+1)}, \ldots , x_{\sigma(n)}).$$

For definiteness assume that $m+1$ is odd and $n-m$ is even (the case of even $m+1$ and odd $n-m$ is analogical). By Theorem~\ref{oddn}, the quasigroup $g$ has transversals, so by Lemma~\ref{quasi1}, the quasigroup $f$  have no transversals only if the quasigroup $h$ has no transversals. The inductive hypothesis implies that $h$ is a $\mathbb{Z}_4$-linear quasigroup and so $h$ is isotopic to the quasigroup defined by the equation $x_{\sigma(m+1)} + \ldots +  x_{\sigma(n)}  = y_0,$ where $+$ is an operation from one of principle classes of $\mathbb{Z}_4$-linear quasigroups.

Next for each $a \in \mathbb{Z}_4$ we consider the quasigroups $h^a$ and $g^a$ defined by the equations $h(x_{\sigma(m+1)}, \ldots , x_{\sigma(n)}) = a$ and $g(x_{\sigma(0)}, x_{\sigma(1)}, \ldots , x_{\sigma(m)}) = a$ respectively. Now the quasigroups $h^a$ have odd arity and the quasigroups $g^a$ have even arity, so by the inductive hypothesis and by Lemma~\ref{quasi2}, the quasigroup $f$ cannot have transversals only if for each $a \in \mathbb{Z}_4$ the quasigroups $g^a$ are $\mathbb{Z}_4$-linear.

By Lemma~\ref{almostz4}, the quasigroup $g$ is isotopic to 
$$(x_{\sigma(0)} +' \ldots +' x_{\sigma(m-1)}) +'' x_{\sigma(m)}  = y_0 \mbox{ or to } (x_{\sigma(0)} +' \ldots +' x_{\sigma(m-1)}) \oplus x_{\sigma(m)} = y_0,$$
where $+'$ and $+''$ are operations in principal classes of binary $\mathbb{Z}_4$-linear quasigroups, and $\oplus$ is a $\mathbb{Z}_2^2$-addition. 

Thus, the quasigroup $f$ is isotopic to the quasigroup defined by the equation
$$(x_{\sigma(0)} +' \ldots +' x_{\sigma(m-1)}) +'' x_{\sigma(m)}  = x_{\sigma(m+1)} + \ldots +  x_{\sigma(n)}$$  or to the quasigroup $$(x_{\sigma(0)} +' \ldots +' x_{\sigma(m-1)}) \oplus x_{\sigma(m)} = x_{\sigma(m+1)} + \ldots +  x_{\sigma(n)}.$$

In the last case we have that the quasigroup $f$ is isotopic to the quasigroup 
$$x_{\sigma(0)} +' \ldots +' x_{\sigma(m-2)}  = ((x_{\sigma(m+1)} + \ldots +  x_{\sigma(n)}) \oplus x_{\sigma(m)}) +'^{-1} x_{\sigma(m-1)}$$
that has a transversal as a composition of a quasigroup of odd arity and a non-$\mathbb{Z}_4$-linear quasigroup of even arity.

Let us analyze the first possibility for the quasigroup $f$. If the binary operations $+'$ and $+''$ or the operations $+'$ and $+^{-1}$ define different principle classes then consider  the quasigroup defined by the equation
$$((x_{\sigma(0)} +' \ldots +' x_{\sigma(m-1)}) +'' x_{\sigma(m)}) +^{-1}  x_{\sigma(n)}  = x_{\sigma(m+1)} + \ldots +  x_{\sigma(n-1)}$$
that is isotopic to the quasigroup $f$.
This quasigroup is a composition of a non-$\mathbb{Z}_4$-linear quasigroup of even arity  and a quasigroup of odd arity, therefore by Lemma~\ref{quasi1}, the quasigroup $f$ has at least one transversal.

If the binary operations $+'$, $+''$, and $+^{-1}$ define the same principle class of $\mathbb{Z}_4$-linear quasigroups then the quasigroup $f$ is the $n$-ary $\mathbb{Z}_4$-linear quasigroup defined by the equation
$$x_{\sigma(0)} +' \ldots +' x_{\sigma(m)} +' x_{\sigma(n)} +' \ldots +' x_{\sigma(m+2)} = x_{\sigma(m+1)},$$
that has no transversals.
\end{proof}

\section{Transversals in the iterated groups $\mathbb{Z}_2^2$ and $\mathbb{Z}_4$ and some computational results}

In this section we count the number of transversals in $\mathbb{Z}_4$-linear quasigroups of odd arity  and in all $n$-ary $\mathbb{Z}^2_2$-linear quasigroups. Also we compare theoretical bounds on the numbers of transversals obtained in the previous sections with computational data for quasigroups of small order and arity.

Recall that by Lemma~\ref{worth}, every transversal in a standardly semilinear quasigroup give a twin or brindled quadruple. The number of transversals corresponding to twin quadruples was counted in Lemma~\ref{twintrans}. To find how many transversals can correspond to brindled quadruples we need to know the number of brindled quadruples (the number of quadruples of blocks in intercalated latin hypercubes that can contain transversals).  

\begin{lemma} \label{numbrind}
Let $W(n)$ be the number of brindled quadruples in the Boolean $(n+1)$-dimensional hypercube. Then
\begin{itemize}
\item $W(n) = \frac{1}{32} \left(6^n - 2^n\right)$ if $n$ is even;
\item $W(n) = \frac{1}{32} \left(6^n - 3 \cdot 2^n\right)$ if $n$ is odd.
\end{itemize}
\end{lemma}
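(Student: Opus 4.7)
The plan is to count ordered worthwhile quadruples first and then convert to the (unordered) brindled count by subtracting the contribution from twin quadruples. I would encode an ordered quadruple $(z^1, z^2, z^3, z^4)$ column by column: at each coordinate $i \in \{0, \ldots, n\}$ the tuple $(z^1_i, z^2_i, z^3_i, z^4_i)$ must be a weight-$2$ vector in $\{0,1\}^4$ (giving $6$ choices independently), so there are exactly $6^{n+1}$ ordered proper quadruples. The worthwhile condition, which asks that every $z^j$ have even weight, translates into the linear constraint $\sum_{i=0}^{n} c^{(i)} = 0$ in $\mathbb{Z}_2^4$, where $c^{(i)} \in \{0,1\}^4$ is the weight-$2$ column at position $i$.

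Next I would extract the number of ordered worthwhile quadruples by a character sum over $\mathbb{Z}_2^4$: for $\epsilon = (\epsilon_1, \ldots, \epsilon_4) \in \{\pm 1\}^4$, let $G(\epsilon) = \sum_{1 \le j < k \le 4} \epsilon_j \epsilon_k$, and write
$$\#\{\text{ordered worthwhile}\} \;=\; \frac{1}{2^4} \sum_{\epsilon \in \{\pm 1\}^4} G(\epsilon)^{n+1}.$$
A direct check gives $G(\epsilon) = 6$ when all $\epsilon_j$ agree (two cases), $G(\epsilon) = -2$ when exactly two differ ($6$ cases), and $G(\epsilon) = 0$ otherwise ($8$ cases), so the sum collapses to $\frac{6^{n+1} + 3 \cdot (-2)^{n+1}}{8}$. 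The parity of $n+1$ then splits this into $\tfrac{3}{4}(6^n + 2^n)$ for $n$ odd and $\tfrac{3}{4}(6^n - 2^n)$ for $n$ even.

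Finally I convert ordered worthwhile counts to the desired brindled count $W(n)$. A brindled quadruple has four distinct vectors and so contributes $4! = 24$ ordered arrangements, while a twin quadruple $\{z,z,\nu(z),\nu(z)\}$ contributes $\binom{4}{2} = 6$ ordered arrangements. Twin quadruples exist only when $n+1$ is even (so that both $z$ and $\nu(z)$ have even weight), and in the odd-$n$ case the unordered pairs $\{z, \nu(z)\}$ with $\text{wt}(z)$ even number exactly $2^{n-1}$, since $\nu$ has no fixed points on Boolean $(n+1)$-vectors. Solving $24\,W(n) + 6 T_w = \#\{\text{ordered worthwhile}\}$ in each parity case yields $W(n) = \tfrac{1}{32}(6^n - 2^n)$ for $n$ even and $W(n) = \tfrac{1}{32}(6^n - 3 \cdot 2^n)$ for $n$ odd, as claimed.

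The main obstacle is just keeping the bookkeeping straight: separating the ordered/unordered distinction, verifying that the $24$-to-$6$ split between brindled and twin is exhaustive among ordered worthwhile quadruples, and confirming that no other degeneration (e.g.\ three equal vectors) is possible; the last follows from the elementary observation that in a proper quadruple two equal columns force the remaining two columns to be equal as well.
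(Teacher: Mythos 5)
Your proof is correct, and it follows the same structural skeleton as the paper's: both arguments identify unordered brindled quadruples with ordered proper quadruples having all four row sums even, modulo the degenerate (twin) ones, and both rely on the observation that a repeated vector in a proper quadruple forces the remaining two vectors to coincide, so the only degeneration is the twin type. (Your phrase ``two equal columns force the remaining two columns to be equal'' should read \emph{rows}, i.e.\ vectors, but the intended claim is the right one.) Where you genuinely diverge is in the evaluation. The paper sets up coupled linear recurrences for the quantities $A^{00}(n), A^{01}(n), A^{11}(n)$ (and similarly $B^{00}, B^{01}, B^{11}$ for the matrices with two pairs of identical rows), solves $A^{01}(n)=4A^{01}(n-1)+12A^{01}(n-2)$, and reads off $A^{00}(n)=\frac{3}{4}\left(6^n-(-2)^n\right)$; you instead get this number in one step by Fourier inversion over $\mathbb{Z}_2^4$, using $\sum_{j<k}\epsilon_j\epsilon_k=\frac{1}{2}\bigl((\sum_j\epsilon_j)^2-4\bigr)\in\{6,-2,0\}$, and you replace the paper's recurrence for $B^{00}$ by the direct observation that the twin quadruples are in bijection with the $2^{n-1}$ complementary pairs $\{z,\nu(z)\}$ of even-weight vectors (each contributing $6$ orderings, giving $3\cdot 2^n = B^{00}(n)$ for odd $n$). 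Your character-sum route is shorter, avoids solving recurrences and checking initial conditions, and makes the appearance of the eigenvalues $6$ and $-2$ conceptually transparent; the paper's recurrence approach is more elementary but requires more bookkeeping. Both yield identical intermediate values, so the final division $W(n)=\frac{1}{24}\left(A^{00}(n)-B^{00}(n)\right)$ gives the stated formulas in each parity case.
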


The proof of this lemma is technical and is given in Appendix. Let us prove now the following result that is equivalent to Theorem~\ref{numtranslatin}. 

\begin{teorema} \label{numtrans}
\begin{enumerate}
\item If $n$ is odd then the number of transversals in an $n$-ary $\mathbb{Z}_2^2$-linear quasigroup is equal to the number of transversals in an $n$-ary $\mathbb{Z}_4$-linear quasigroup and equals $\frac{3}{8} \cdot 24^{n-1} + 5 \cdot 8^{n-2}$.
\item If $n$ is even then the number of transversals in an $n$-ary $\mathbb{Z}_2^2$-linear quasigroup is  $\frac{3}{8} \cdot 24^{n-1} -  8^{n-2}$. 
\end{enumerate} 
Moreover,  these $n$-ary quasigroups  are unique by isotopy and parastrophe having the maximal number of transversals among all semilinear quasigroups.
\end{teorema}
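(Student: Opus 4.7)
The plan is to exploit the decomposition of transversals in standardly semilinear quasigroups supplied by Lemma~\ref{worth}, together with the quantitative contributions of Lemmas~\ref{twintrans}, \ref{brindtrans}, and~\ref{numbrind}. By Lemmas~\ref{z4char} and~\ref{z22char}, an $n$-ary $\mathbb{Z}_4$-linear (respectively $\mathbb{Z}_2^2$-linear) quasigroup is isotopic to a standardly semilinear one with orientation function $\lambda_{\mathbb{Z}_4}$ (respectively $\lambda_{\mathbb{Z}_2^2}\equiv 0$), and since isotopy preserves $T(f)$, it suffices to count transversals for these canonical representatives. Each transversal corresponds to a worthwhile quadruple; the twin contribution is $8^{n-1}$ for $n$ odd and $0$ for $n$ even, depending only on the parity of $n$ and not on $\lambda$, while each brindled quadruple satisfying $\lambda(\overline{z^1}) \oplus \cdots \oplus \lambda(\overline{z^4}) = 0$ contributes $2\cdot 4^{n-1}$ transversals.

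For $\lambda_{\mathbb{Z}_2^2} \equiv 0$ the XOR condition is trivially satisfied by every brindled quadruple, so all $W(n)$ of them contribute; substituting the values of $W(n)$ from Lemma~\ref{numbrind} and simplifying gives $\frac{3}{8}\cdot 24^{n-1} + 5\cdot 8^{n-2}$ for $n$ odd and $\frac{3}{8}\cdot 24^{n-1} - 8^{n-2}$ for $n$ even. The crux is the $\mathbb{Z}_4$ case, where I need to establish that for every brindled quadruple $\{z^1,z^2,z^3,z^4\}$ of Boolean $(n+1)$-vectors
\[
\lambda_{\mathbb{Z}_4}(\overline{z^1}) \oplus \lambda_{\mathbb{Z}_4}(\overline{z^2}) \oplus \lambda_{\mathbb{Z}_4}(\overline{z^3}) \oplus \lambda_{\mathbb{Z}_4}(\overline{z^4}) \equiv n+1 \pmod{2}.
\]
To prove this I would use that each $z^j$ has even weight (the quadruple is worthwhile) and split by the value of $z^j_0$: a direct case check against the definition of $\lambda_{\mathbb{Z}_4}$ shows the unified identity $\lambda_{\mathbb{Z}_4}(\overline{z^j}) \equiv w(z^j)/2 \pmod{2}$ in both cases $z^j_0=0$ and $z^j_0=1$. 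Properness of the quadruple forces $\sum_j w(z^j) = 2(n+1)$, so the displayed XOR equals $n+1$ modulo $2$. Thus for $n$ odd every brindled quadruple contributes and the count coincides with the $\mathbb{Z}_2^2$-linear one; for $n$ even none contribute, recovering $T(f)=0$ in agreement with Theorem~\ref{harzero}.

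For the uniqueness claim, note that the twin contribution is $\lambda$-independent while each brindled quadruple contributes $0$ or $2\cdot 4^{n-1}$, so the maximum over standardly semilinear quasigroups is attained exactly when the XOR parity is $0$ on every brindled quadruple, i.e.\ when $\delta=0$ in the terminology of Lemma~\ref{bool}. That lemma then forces the coordinate $2$-plane sums of $\lambda$ to be a single constant modulo $2$: identically $0$ when $n$ is even, and either uniformly $0$ or uniformly $1$ when $n$ is odd. Invoking Lemmas~\ref{z22char}(3) and~\ref{z4char}(3), these alternatives correspond exactly to the $\mathbb{Z}_2^2$-linear main class (for every $n$) and to the $\mathbb{Z}_4$-linear main class (only admissible when $n$ is odd), concluding the uniqueness. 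The main obstacle in the proof is the XOR identity for $\lambda_{\mathbb{Z}_4}$ displayed above; once the case-uniform formula $\lambda_{\mathbb{Z}_4}(\overline{z^j}) \equiv w(z^j)/2 \pmod{2}$ is recognized, the rest follows from properness and a short algebraic simplification.
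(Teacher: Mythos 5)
Your proposal is correct and follows essentially the same route as the paper: reduce to the standard semilinear representatives via Lemmas~\ref{z4char} and~\ref{z22char}, add the twin contribution from Lemma~\ref{twintrans} to $2\cdot 4^{n-1}W(n)$ brindled contributions via Lemmas~\ref{brindtrans} and~\ref{numbrind}, and derive uniqueness from Lemma~\ref{bool} together with the plane-sum characterizations. Your identity $\lambda_{\mathbb{Z}_4}(\overline{z^j})\equiv w(z^j)/2 \pmod 2$ is just a compact restatement of the paper's parity count of the $z^j$ with weight $\equiv 2 \pmod 4$, so there is no substantive difference.
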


\begin{proof}
1. By Lemma~\ref{twintrans},  every semilinear quasigroup of odd arity $n$ contains $8^{n-1}$ transversals obtained from twin quadruples. 
 Also recall that by Lemma~\ref{brindtrans}, in a standardly semilinear quasigroup defined by a Boolean function $\lambda$ a brindled quadruple $\left\{z^1, z^2, z^3, z^4\right\}$ gives transversals  if and only if $\lambda(\overline{z^1}) \oplus \lambda(\overline{z^2}) \oplus \lambda(\overline{z^3}) \oplus \lambda(\overline{z^4}) = 0$.

By Lemma~\ref{z22char}, every $n$-ary $\mathbb{Z}_2^2$-linear quasigroup is isotopic to the standardly semilinear quasigroup $f_{\mathbb{Z}_2^2}$ defined by the identical zero Boolean function $\lambda_{\mathbb{Z}_2^2}$. Consequently, for every brindled quadruple $\left\{z^1, z^2, z^3, z^4\right\}$ the sum    $\lambda_{\mathbb{Z}_2^2}(\overline{z^1}) \oplus \lambda_{\mathbb{Z}_2^2}(\overline{z^2}) \oplus \lambda_{\mathbb{Z}_2^2}(\overline{z^3}) \oplus \lambda_{\mathbb{Z}_2^2}(\overline{z^4}) $ equals zero, and so every brindled quadruple produces transversals in the quasigroup $f_{\mathbb{Z}_2^2}$.

Next by Lemma~\ref{z4char}, every $n$-ary $\mathbb{Z}_4$-linear quasigroup is isotopic to the standardly semilinear quasigroup $f_{\mathbb{Z}_4}$ defined the Boolean function $\lambda_{\mathbb{Z}_4}$ such that $\lambda_{\mathbb{Z}_4} (z) = 0$ if weight of $z$ is congruent to 0 or 3 by modulo 4, and $\lambda_{\mathbb{Z}_4} (z) = 1$ if weight of $z$ is congruent to 1 or 2 by modulo 4.

Let $\left\{z^1, z^2, z^3, z^4\right\}$ be a brindled quadruple in the $(n+1)$-dimensional Boolean hypercube. By definition, each $z^{j}$ has an even weight and $w(z^1) + w(z^2) + w(z^3)+ w(z^4) = 2n +2 \equiv 0 \mod 4.$ Then the number of $z^{j}$ having weight congruent to 2 by modulo 4 is even.   Consequently an even number of $\overline{z^{j}}$ have weight congruent to 1 or 2 by modulo 4.  Therefore,
$$\lambda_{\mathbb{Z}_4}(\overline{z^1}) \oplus \lambda_{\mathbb{Z}_4}(\overline{z^2}) \oplus \lambda_{\mathbb{Z}_4}(\overline{z^3}) \oplus \lambda_{\mathbb{Z}_4}(\overline{z^4}) = 0,$$
and every brindled quadruple produces transversals in the quasigroup $f_{\mathbb{Z}_4}$.

By Lemma~\ref{brindtrans}, each brindled quadruple generates exactly $2 \cdot 4^{n-1}$ transversals. Thus,
\begin{gather*}
T(f_{\mathbb{Z}_2^2}) = T(f_{\mathbb{Z}_4}) = 2 \cdot 4^{n-1} W(n) + 8^{n-1} = \\ = \frac{1}{16} \cdot 4^{n-1} \left(6^n - 3 \cdot 2^n\right) + 8^{n-1} =  \frac{3}{8} \cdot 24^{n-1} + 5 \cdot 8^{n-2}.  
\end{gather*}

2. If $n$ is even then every transversal in a semilinear $n$-ary quasigroup gives only brindled quadruples.

As before,  an $n$-ary $\mathbb{Z}_2^2$-linear quasigroup  is isotopic to the standardly semilinear quasigroup $f_{\mathbb{Z}_2^2}$ defined by the identical zero Boolean function and  every brindled quadruple produces exactly $2 \cdot 4^{n-1}$ transversals. Therefore,

$$T(f) = 2\cdot 4^{n-1} W(n) = \frac{1}{16} \cdot 4^{n-1} \left(6^n -  2^n\right) =  \frac{3}{8} \cdot 24^{n-1} -  8^{n-2}.$$

Since for the quasigroups $f_{\mathbb{Z}_4}$ of odd arity and $f_{\mathbb{Z}_2^2}$  every brindled quadruple gives transversals, these quasigroups have the maximal number of transversals among all semilinear quasigroups. By Lemma~\ref{bool}, if for some Boolean function $\lambda$ and for each brindled quadruple $\left\{z^1, z^2, z^3, z^4\right\}$ it holds $\lambda(\overline{z^1}) \oplus \lambda(\overline{z^2}) \oplus \lambda(\overline{z^3}) \oplus \lambda(\overline{z^4}) = 0$, then for every 2-dimensional plane $P$ in the $n$-dimensional Boolean hypercube the sum $\sum\limits_{z \in P} \lambda(z)$ is congruent to either 0 or 1  by modulo 2 for odd $n$ and congruent to 0 for even $n$. By Lemmas~\ref{z4char} and~\ref{z22char}, such Boolean functions $\lambda$ can define only $\mathbb{Z}_4$-linear and $\mathbb{Z}_2^2$-linear quasigroups. 
\end{proof}

In conclusion, we compare lower bounds on numbers of transversals in quasigroups of small arity and order with the minimum values obtained with the help of~\cite{wancec} and in personal communication with I. M. Wanless.

\begin{itemize}

\item $n = 3.$
\begin{itemize}
\item $q=4:$ The minimum number of transversals is equal to 96, that is greater than the lower bound 64 from Theorem~\ref{oddn} but coincides with the bound for completely reducible quasigroups from Theorem~\ref{compred}.
\item $q=5:$ The minimum number of transversals is 859, and by Theorem~\ref{compred}, all completely reducible quasigroups have at least 600 transversals. 
\item $q=6:$ The minimum number of transversals is 7632, and the number of transversals in completely reducible quasigroups is greater than 4320.
\end{itemize}

\item $n=4$.
\begin{itemize}
\item $q=4:$ There exists a unique quasigroup up to equivalence without transversals.
\item $q=5:$ The minimum number of transversals is 60843, and by Theorem~\ref{compred}, the number of transversals in all completely reducible quasigroups is greater than 600.
\end{itemize}

\item $n=5$.
\begin{itemize}
\item $q=4:$  The minimum number of transversals is equal to 18432, that is greater than the lower bound 4096 from Theorem~\ref{oddn} and the bound 9216 for completely reducible quasigroups from Theorem~\ref{compred}.
\item $q=5:$ The minimum number of transversals is 8096923, and the number of transversals in completely reducible quasigroups is greater than 360000.
\end{itemize}
\end{itemize} 

Obtained data allows us to suggest that for quasigroups of order 4 and odd arity the lower bound from Theorem~\ref{oddn} is close to the actual minimum number of transversals.

\section{Appendix}

\textbf{Proof of Lemma~\ref{pred}}

\begin{proof} 
It is known that the structure of every completely reducible $n$-ary quasigroup $f$ can be presented as an unrooted binary tree $\mathcal{T}$, in which $n-1$ vertices of degree 3 (that are called inner vertices) correspond to binary operations  composing $f$, and leaves are labeled by variables $x_0, x_1, \ldots, x_n.$ The tree $\mathcal{T}$ uniquely determines the MDS code $F$ for the quasigroup $f$. More detailed description and additional properties of the tree $\mathcal{T}$ can be found, for example, in~\cite{pot}.

Since the number of leaves of the tree $\mathcal{T}$ is greater than the number of inner vertices, there exists an inner vertex adjacent to 2 leaves. Assuming that this vertex corresponds to the binary quasigroup $h_2$, the tree $\mathcal{T}$ defines the quasigroup
$$g(x_{\pi(1)}, \ldots, x_{\pi(n)}) = x_{\pi(0)} \Leftrightarrow h_1(x_{\pi(0)}, x_{\pi(1)}, \ldots, x_{\pi(n-2)} ) = h_2(x_{\pi(n-1)},  x_{\pi(n)}),$$
where $\pi \in S_{n+1}$ is some permutation and $h_1$ is a completely reducible $(n-1)$-ary quasigroup.
\end{proof}

\textbf{Proof of Lemma~\ref{z4char}}

\begin{proof}
1. By definition, the iterated group $\mathbb{Z}_4$ is completely reducible and an application of an isotopy preserves this property.  

2. Let us prove that the $n$-ary iterated group $\mathbb{Z}_4$ defined by the equation
$$x_0 + x_1 + \ldots + x_n = 0 ,~ x_i \in \mathbb{Z}_4$$
is isotopic to the standardly semilinear quasigroup $h_{\mathbb{Z}_4}$ defined by the Boolean function $\lambda_{\mathbb{Z}_4}$.

We claim that the isotopy $(\sigma, \sigma, \ldots, \sigma)$, where
$$\sigma(0) = 0, ~ \sigma(1) = 2, ~ \sigma(2) = 1, ~\sigma(3) = 3,$$
turns the iterated group $\mathbb{Z}_4$ into the standardly semilinear quasigroup $h_{\mathbb{Z}_4}.$

Indeed, it can be checked that for all $a_1, a_2, a_3 \in \mathbb{Z}_4$
$$a_1 + a_2 = a_3 \Rightarrow l(\sigma(a_1)) + l(\sigma(a_2)) = l(\sigma(a_3)) \mod 2.$$
Therefore for each $(y_0, y_1, \ldots, y_n)$ such that $y_0 = h_{\mathbb{Z}_4}(y_1, \ldots, y_n)$ we have $l(y_0) \oplus \ldots \oplus l(y_n) = 0$, so the obtained via such isotopy quasigroup $h_{\mathbb{Z}_4}$ is standardly semilinear.

Let us prove that this quasigroup is defined by the Boolean function $\lambda_{\mathbb{Z}_4}$ such that $\lambda_{\mathbb{Z}_4}(z) =0 $ if   $w(z) \equiv 0$ or $3 \mod 4$ and $\lambda_{\mathbb{Z}_4}(z) =1 $ if $w(z) \equiv 1$ or $2 \mod 4$. 

Consider an $(n+1)$-vector $(y_0, y_1, \ldots, y_n)$ from the graph of the quasigroup $h_{\mathbb{Z}_4}$ and assume that $w(l(y_1), \ldots, l(y_n)) \equiv k \mod 4.$ Since $l(y_0) \oplus l(y_1) \oplus \ldots \oplus l(y_n) = 0$, we have $l(y_0) \equiv k \mod 2.$  So $w(l(y_0), l(y_1), \ldots, l(y_n)) \equiv 2 \mod 4$ if $k$ is equal to 1 or 2, and $w(l(y_0), l(y_1), \ldots, l(y_n)) \equiv 0 \mod 4$ if $k$ is equal to 0 or 3. 

Let us denote by $N'_a$ the number of indices $i$ such that $y_{i} = a,$ and by $N_a$ the number of indices $i$ such that $\sigma^{-1}(y_{i}) = a.$ By the definition of permutation $\sigma$, we have
$$N_0 = N'_0; ~ N_1 = N'_2; ~ N_2 = N'_1; ~ N_3= N'_3.$$ 

We consider the case when $k$ is equal to 1 or 2 and $w(l(y_0), l(y_1), \ldots, l(y_n)) \equiv 2 \mod 4$. For  another case the reasoning is similar.
Note that the equality $w(l(y_0), l(y_1), \ldots, l(y_n)) \equiv 2 \mod 4$ means $N_2' + N_3' \equiv 2 \mod 4 $. Then $N_1 + N_3 \equiv 2 \mod 4. $ Also, because of
$\sigma^{-1}(y_0) + \sigma^{-1}(y_1) + \ldots + \sigma^{-1}(y_n) \equiv 0 \mod 4,$
it holds $ N_1 + 2N_2 + 3 N_3 \equiv 0 \mod 4.$ Combining these two equations, we obtain $N_2 + N_3 \equiv 1 \mod 4$ that yields $N_1'+ N_3' \equiv 1 \mod 4.$ It only remains to note that
$$\lambda_{\mathbb{Z}_4}(l(y_1), \ldots, l(y_n)) = y_0 \oplus y_1 \oplus \ldots \oplus y_n = N_1' \oplus N_3' =1.$$ 

3. \textit{Necessity.}   
Consider the standardly semilinear quasigroup $h_{\mathbb{Z}_4}$ defined by the Boolean function $\lambda_{\mathbb{Z}_4}$. Every 2-dimensional plane $P$ in the Boolean $n$-dimensional hypercube consists of one vector of weight $k-1$, two  vectors of weights $k$, and one vector of weight $k+1$ for some $1 \leq k  \leq n-1.$ So by the definition of the function $\lambda_{\mathbb{Z}_4}$, it holds $\sum\limits_{z \in P} \lambda(z) \equiv 1 \mod 2.$

To complete the proof we note that all isotopies preserving the property of standard semilinearity are composed of permutations
$$\sigma' = (1023),~\sigma'' = (0132), ~\sigma''' = (2301),$$
that save the parity of the sum of values of a Boolean function over any 2-dimensional plane.

\textit{Sufficiency.} The proof is by induction on $n$. The case $n=2$ is verified directly.

Let $f$ be an $n$-ary standardly semilinear quasigroup and let us denote by $f'$ a quasigroup defined by the equation $f(x_1, \ldots, x_{n-1}, 0) =x_0.$ Then $f'$ is an $(n-1)$-ary standardly semilinear quasigroup with Boolean function $\lambda'$ having odd sum over all 2-dimensional planes.  By the inductive assumption, there exists an isotopy $\Sigma = (\sigma_0, \sigma_1, \ldots, \sigma_{n-1})$ preserving oddity over all 2-dimensional planes that turns the quasigroup $f'$ into the $(n-1)$-ary quasigroup $h_{\mathbb{Z}_4}$ defined by the Boolean function $\lambda_{\mathbb{Z}_4}$ on the $(n-1)$-dimensional Boolean hypercube.

Apply the isotopy $\Sigma$ to the quasigroup $f$ and obtain some quasigroup $g$ with a Boolean function $\lambda''$ such that the sum of values of $\lambda''$ over any 2-dimensional plane is odd. Note that there exists a hyperplane $\Gamma$ of the $n$-dimensional Boolean hypercube in which the function $\lambda''$ is exactly the function $\lambda_{\mathbb{Z}_4}$. Then knowledge of one of values of the function $\lambda''$ outside the hyperplane $\Gamma$ allows us to find values of the function $\lambda''$ on entries which belong to 2-dimensional planes intersecting the hyperplane $\Gamma$ and containing the element outside the hyperplane $\Gamma$, and so it allows us to reconstruct all other values of the function $\lambda''.$  

Thus we have only two possibilities  for the quasigroup $g$, one of which coincides with the $n$-ary quasigroup $h_{\mathbb{Z}_4},$ and another can be turned into this quasigroup by the isotopy $(\varepsilon, \ldots, \varepsilon, (0132))$, where $\varepsilon$ is the identical permutation.
\end{proof}

\textbf{Proof of Lemma~\ref{z22char}}

\begin{proof}
1. By definition, the iterated group $\mathbb{Z}_2^2$ is completely reducible and an application of an isotopy preserves this property.  

2. Consider the $n$-ary iterated group $\mathbb{Z}_2^2$:
$$x_0 + x_1 + \ldots + x_n = 0, ~ x_i \in \mathbb{Z}_2^2.$$

It is known that elements $a \in \mathbb{Z}_2^2$ can be  represented by Boolean vectors $\tilde{a} = (l(a), p(a))$, where  $p(a) \equiv a \mod 2$. Note that for all $a_1, a_2, a_3 \in \mathbb{Z}_2^2$ it holds
$$a_1 + a_2 = a_3 \Leftrightarrow \tilde{a_1} \oplus \tilde{a_2} = \tilde{a_3}.$$

Then for all $x_0, x_1, \ldots, x_n \in \mathbb{Z}_2^2$ we have
$$x_0 +x_1 + \ldots + x_n = 0 \Leftrightarrow l(x_0) \oplus \ldots \oplus l(x_n) = 0 \mbox{ and } x_0 \oplus \ldots \oplus x_n = 0.$$

Therefore the iterated group $\mathbb{Z}_2^2$ coincides with the standardly semilinear quasigroup $h_{\mathbb{Z}_2^2}$ defined by the identical zero Boolean function $\lambda_{\mathbb{Z}_2^2}.$

3. The proof is analogical to the proof of Lemma~\ref{z4char}.
\end{proof}

\textbf{Proof of Lemma~\ref{bool}}

For convenience we reformulate Lemma~\ref{bool} as follows.

\textit{ 
Let $\lambda$ be a Boolean function on the $n$-dimensional Boolean hypercube. Suppose that for every proper quadruple $\left\{z^1, z^2, z^3, z^4\right\}$ of different $n$-vectors, where two of $z^j$ have even weight and other two of $z^j$ have odd weight, it holds $\lambda(z^1) \oplus \lambda(z^2) \oplus \lambda(z^3) \oplus \lambda(z^4) = \delta.$
\begin{enumerate}
\item If $\delta = 1$ and $n$ is even, then for every $2$-dimensional plane $P$ in the Boolean hypercube we have $\sum\limits_{z \in P} \lambda(z) \equiv 1 \mod 2.$ If  $n$ is odd then for any Boolean function $\lambda$ the sum $\lambda(z^1) \oplus \lambda(z^2) \oplus \lambda(z^3) \oplus \lambda(z^4)$ equals to $0$ for at least $\frac{1}{6}(4^{n-1} -2^{n-1})$ such proper quadruples $\left\{z^1, z^2, z^3, z^4\right\}$. 
\item If $\delta = 0$ and $n$ is even, then for every $2$-dimensional plane $P$ in the Boolean hypercube $\sum\limits_{z \in P} \lambda(z) \equiv 0 \mod 2,$ and if $n$ is odd then for every $2$-dimensional planes $P_1$ and $P_2$ it holds $\sum\limits_{z \in P_1} \lambda(z) \equiv \sum\limits_{z \in P_2} \lambda(z) \mod 2.$
\end{enumerate} } 

\begin{proof}
1. Let $n$ be even greater than 2, because for $n=2$ the statement is obviously true. Without loss of generality, we take the 2-dimensional plane $P$ composed by vectors 
$a^1, a^2, a^3,$ and $a^4$, where $a^2$ and $a^4$ have an even weight and $a^1$ and $a^3$ have an odd weight. For example, we may suppose that
$$a^1 = (0, 1, 0, \ldots, 0); ~a^2 = (0, 0, 0, \ldots, 0); $$
$$a^3 = (1, 0, 0, \ldots, 0); ~a^4 = (1, 1, 0, \ldots, 0). $$
Assume that $\lambda(a^1) \oplus \lambda(a^2) \oplus \lambda(a^3) \oplus \lambda(a^4) = 0$.

Consider the quadruple $\left\{a^2, a^4, \nu(a^1), \nu(a^3)\right\}$. Note that it is proper and contains two vectors of even weight and two vectors of odd weight. By the condition of the lemma,
 $\lambda(a^2) \oplus \lambda(a^4) \oplus \lambda(\nu(a^1)) \oplus \lambda(\nu(a^3)) = 1.$  Note that the same is true for quadruples $\left\{a^1, a^2, \nu(a^1), \nu(a^2)\right\}$ and $\left\{a^2, a^3, \nu(a^2), \nu(a^3)\right\}$. Consequently,
\begin{gather*}
\lambda(a^2) \oplus \lambda(a^4) \oplus \lambda(\nu(a^1)) \oplus \lambda(\nu(a^3)) = (\lambda(a^2) \oplus \lambda(a^3) \oplus \lambda(\nu(a^2)) \oplus \lambda(\nu(a^3))) \oplus \\
\oplus (\lambda(a^1) \oplus \lambda(a^2) \oplus \lambda(\nu(a^1)) \oplus \lambda(\nu(a^2))) \oplus (\lambda(a^1) \oplus \lambda(a^2) \oplus \lambda(a^3) \oplus \lambda(a^4))  = \\
=  0 \mbox{ (by assumption) } \oplus 1 \mbox{ (by condition) }  \oplus 1 \mbox{ (by condition) }= 0; 
\end{gather*}
a contradiction.
Therefore the sum of values of $\lambda$ over every 2-dimensional plane is odd.

Suppose $n \geq 3$ is odd. Let us consider the set $U$ of all proper quadruples of the form $\left\{z^1, z^2, \nu(z^1), \nu(z^2)\right\}$, where $z^1 \neq z^2$ and where vectors $z^1$ and $z^2$ has an even weight. The cardinality of the set $U$ is equal to the number of unordered pairs of different even-weight vectors in the $n$-dimensional Boolean hypercube and equals $\frac{1}{2}(4^{n-1} -2^{n-1})$. 

Divide the set $U$ into disjoint subsets $Z_1, \ldots, Z_6$ containing proper quadruples of the following form:
$$Z_1 \ni \left\{z^1, z^2, \nu(z^1), \nu(z^2)\right\};~ Z_2 \ni \left\{z^1, z^3, \nu(z^1), \nu(z^3)\right\};$$
$$Z_3 \ni \left\{z^1, z^4, \nu(z^1), \nu(z^4)\right\};~ Z_4 \ni \left\{z^2, z^3, \nu(z^2), \nu(z^3)\right\};$$
$$Z_5 \ni \left\{z^2, z^4, \nu(z^2), \nu(z^4)\right\};~ Z_6 \ni \left\{z^3, z^4, \nu(z^3), \nu(z^4)\right\}.$$  

Put $\Lambda_i = \lambda(z^k) \oplus \lambda(z^l) \oplus \lambda(\nu(z^k)) \oplus \lambda (\nu(z^l))$ for $\left\{z^k,z^l, \nu(z^k),\nu(z^l)\right\} \in Z_i$. It is easy to see that $\Lambda_i$ satisfy the equalities
$$\Lambda_1 \oplus \Lambda_2 \oplus \Lambda_4 = 0; ~~ \Lambda_1 \oplus \Lambda_3 \oplus \Lambda_5 = 0; ~~\Lambda_2 \oplus \Lambda_3 \oplus \Lambda_6 = 0$$
implying that at least two of six $\Lambda_i$ equals zero. Therefore, the sum of values of a function $\lambda$ is even at least on one third of quadruples from $U$.

2.  The proof for even $n$ is analogical to one given in clause 1. 

Suppose that $n$ is odd greater then 3. For $n=3$ the statement can be verified directly. Let us prove that for every 2-dimensional planes $P_1$ and $P_2$ such that $P_1 \cap P_2 \neq \emptyset$ it holds $\sum\limits_{z \in P_1} \lambda(z) \equiv \sum\limits_{z \in P_2} \lambda(z) \mod 2$. This implies that the same holds for all pairs of 2-dimensional planes.

Without loss of generality, we suppose that plane $P_1$ consists of vectors $a^1, a^2, a^3, a^4$, the plane $P_2$ consists of vectors $a^1, a^2, a^5, a^6$, where $a^2, a^4, a^6$ have an even weight, $a^1, a^3, a^5$ have an odd weight, $\sum\limits_{a^j \in P_1} \lambda(a^j) \equiv 0 \mod 2$, and $\sum\limits_{a^j \in P_2} \lambda(a^j) \equiv 1 \mod 2$. Then
\begin{gather*}
\lambda(a^3) \oplus \lambda(a^4) \oplus \lambda(a^5) \oplus \lambda(a^6) = \\
= (\lambda(a^1) \oplus \lambda(a^2) \oplus \lambda(a^3) \oplus \lambda(a^4)) \oplus (\lambda(a^1) \oplus \lambda(a^2) \oplus \lambda(a^5) \oplus \lambda(a^6)) = \\ = 0 \oplus 1 = 1.
\end{gather*}

Note that quadruples $\left\{a^3, a^4, \nu(a^3), \nu(a^4)\right\}$ and $\left\{a^5, a^6, \nu(a^5), \nu(a^6)\right\}$ are proper and consist of two vectors of even weight and two vectors of odd weight. Therefore,
\begin{gather*}
\lambda(\nu(a^3)) \oplus \lambda(\nu(a^4)) \oplus \lambda(\nu(a^5)) \oplus \lambda(\nu(a^6)) = (\lambda(a^3) \oplus \lambda(a^4) \oplus \lambda(a^5) \oplus \lambda(a^6)) \oplus \\
\oplus (\lambda(a^3) \oplus \lambda(a^4) \oplus \lambda(\nu(a^3)) \oplus \lambda(\nu(a^4))) \oplus (\lambda(a^5) \oplus \lambda(a^6) \oplus \lambda(\nu(a^5)) \oplus \lambda(\nu(a^6))) \\ = 1 \oplus 0 \oplus 0 = 1.
\end{gather*}

Since $\lambda(a^3) \oplus \lambda(a^4) \oplus \lambda(a^5) \oplus \lambda(a^6) = 1$,  we have $\lambda(a^3) \oplus \lambda(a^6) = 1$ or $\lambda(a^4) \oplus \lambda(a^5) = 1$. Similarly, $\lambda(\nu(a^3)) \oplus \lambda(\nu(a^4)) \oplus \lambda(\nu(a^5)) \oplus \lambda(\nu(a^6)) = 1$ implies that $\lambda(\nu(a^3)) \oplus \lambda(\nu(a^6)) = 0$ or $\lambda(\nu(a^4)) \oplus \lambda(\nu(a^5)) = 0.$ It is easy to see that quadruples $\left\{a^3, a^6, \nu(a^3), \nu(a^6)\right\}$, $\left\{a^3, a^6, \nu(a^4), \nu(a^5)\right\}$, $\left\{a^4, a^5, \nu(a^3), \nu(a^6)\right\}$, and $\left\{a^4, a^5, \nu(a^4), \nu(a^5)\right\}$ are proper and contains two vectors of even weight and two vectors of odd weight. By the condition of the lemma the function $\lambda$ must have an even sum on all of them, that is impossible.
\end{proof}

\textbf{Proof of Lemma~\ref{almostz4}}

\begin{proof}

We prove the following equivalent statement.

\textit{Let $f$ be an $n$-ary quasigroup of order $4$.
Suppose for all $a \in \Sigma_4$  the quasigroups $f^a$ defined by the equation $f(x_1, \ldots, x_n) = a$ are $\mathbb{Z}_4$-linear. Then the quasigroup $f$ is completely reducible.}

Indeed, if all  quasigroups $f^a$ are $\mathbb{Z}_4$-linear and if $f$ is completely reducible then the quasigroup $f$ is a composition of an $(n-1)$-ary $\mathbb{Z}_4$-linear quasigroup and some binary quasigroup.

The proof is by induction on $n$. 
For $n \leq 5$ the statement is verified directly with the help of the list of latin hypercubes of small orders and dimensions provided by~\cite{wancec}.

For the inductive step we will use the following result obtained in~\cite{krpt}.

\begin{utv}\label{compredvsp}
Let $n \geq 5$ and let $f$ be an $n$-ary quasigroup of order $4$. Assume that for all $k \in \left\{1, \ldots, n-2 \right\}$ and for all choices of $0 \leq i_1 < \ldots < i_{k} \leq n $ and $a_{i_1}, \ldots, a_{i_k} \in \Sigma_4$ quasigroups $f_{a_{i_1}, \ldots, a_{i_k}}^{i_1, \ldots, i_k}$  defined by relations
$$f(x_1, \ldots, x_n) = x_0,~ x_{i_1} = a_{i_1}, \ldots, ~ x_{i_k} = a_{i_{k}}$$
are reducible. Then the quasigroup $f$ is completely reducible.
\end{utv}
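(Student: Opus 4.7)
The natural approach is strong induction on $n$, with the base cases $n \leq 5$ handled by the direct verification mentioned in the excerpt (using the classification of small latin hypercubes from~\cite{wancec}). For the inductive step, suppose $n \geq 6$ and assume the statement for all smaller arities. First observe that the hypothesis is inherited by every retract of $f$: fixing further coordinates in a retract $f^{i_1,\ldots,i_k}_{a_{i_1},\ldots,a_{i_k}}$ simply produces a retract of $f$ with more coordinates fixed, and these are reducible by assumption. In particular every $1$-retract $f^i_a$ of $f$ has arity $n-1 \geq 5$ and itself satisfies the hypothesis, so by the inductive assumption each such retract is already completely reducible.

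Next I would apply Theorem~\ref{krpt} to $f$ and split into two cases. In the reducible case, write $f$ as the composition of an $(m+1)$-ary quasigroup $g$ and an $(n-m)$-ary quasigroup $h$, with $m+1 \geq 2$ and $n-m \geq 2$. The key subtask is to check that both $g$ and $h$ inherit the retract hypothesis. A $j$-retract of $g$ corresponds, after fixing the same $j$ variables on the $g$-side of $f$, to an $(n-j)$-ary retract of $f$ that is itself a composition of this $j$-retract of $g$ with $h$. Combining reducibility of this retract of $f$ with the uniqueness of the decomposition tree invoked in Lemma~\ref{pred}, one can extract reducibility of the $j$-retract of $g$; the analogous argument works for $h$. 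The inductive hypothesis then yields complete reducibility of both $g$ and $h$, and hence of $f$.

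In the remaining case $f$ is semilinear but not reducible, and I would aim for a contradiction. Pass to a standardly semilinear isotopic copy of $f$ with Boolean function $\lambda\colon \mathbb{Z}_2^n \to \mathbb{Z}_2$. A $k$-retract of $f$ is again standardly semilinear of arity $n-k$, its defining Boolean function being the restriction of $\lambda$ to a suitable affine subspace of $\mathbb{Z}_2^n$. Reducibility of that retract, via Theorem~\ref{krpt} and the dichotomy between semilinear and reducible behaviour, translates into a structural condition on the restricted $\lambda$. The target is then to show that if every restriction of $\lambda$ to a codimension-$k$ subspace (for each $1 \leq k \leq n-3$) gives a reducible semilinear quasigroup, $\lambda$ itself must admit a global decomposition as a sum of Boolean functions on disjoint variable subsets, which corresponds to $f$ being reducible after all, contradicting the case assumption.

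The main obstacle is exactly this last step: pushing local decomposability of $\lambda$ on every small affine restriction up to a global decomposition. This is where the order being $4$ enters essentially, and I would expect the original argument in~\cite{krpt} to proceed by explicit case analysis over the principal classes of binary quasigroups of order $4$ and over the two orientation patterns $\lambda_{\mathbb{Z}_4}$ and $\lambda_{\mathbb{Z}_2^2}$ from Lemmas~\ref{z4char} and~\ref{z22char}, piecing together a global tree decomposition from the compatible local ones produced by the retract hypothesis.
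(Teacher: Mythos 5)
The first thing to say is that the paper contains no proof of Proposition~\ref{compredvsp} at all: it is stated in the appendix, inside the proof of Lemma~\ref{almostz4}, as ``the following result obtained in~\cite{krpt}'' and used as a black box. So there is no internal argument to compare yours against, and your proposal must stand on its own. It does not yet, because its decisive step is explicitly left open. Since the hypothesis constrains only the retracts of $f$ (the number of fixed coordinates $k$ ranges over $\left\{1, \ldots, n-2\right\}$, so nothing is assumed about $f$ itself), and since by Theorem~\ref{krpt} the only way for $f$ to fail to be reducible is to be semilinear, essentially the whole content of the proposition is your last case: an irreducible semilinear $f$ all of whose retracts are reducible must not exist. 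You correctly identify this as ``the main obstacle'' but then defer it to a case analysis you expect to find in~\cite{krpt}; until the implication from local decomposability of all restrictions of $\lambda$ to global decomposability is actually carried out, the statement is unproved. Your base case is also misattributed: the direct verification for $n \leq 5$ via~\cite{wancec} in the paper's appendix is for Lemma~\ref{almostz4}, not for this proposition, so the case $n=5$ would need its own verification.

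There is moreover a step in your reducible case that fails as written. You propose to fix $j$ variables on the $g$-side, observe that the resulting retract of $f$ is the composition of the $j$-retract of $g$ with $h$, and then ``extract reducibility of the $j$-retract of $g$'' from reducibility of that retract of $f$ together with uniqueness of the decomposition tree. But a composition is reducible by definition, so the hypothesis carries no information here, and reducibility of a composition never implies reducibility of a factor: composing an irreducible ternary semilinear quasigroup with a binary one yields a perfectly reducible $4$-ary quasigroup. Nor does Lemma~\ref{pred} supply a uniqueness statement --- it is an existence result, and the paper explicitly remarks that proper representations need not be unique. The transfer can be repaired without any tree theory: in $g(x_{\sigma(0)}, \ldots, x_{\sigma(m)}) = h(x_{\sigma(m+1)}, \ldots, x_{\sigma(n)})$, fixing all but one variable of the $h$-side collapses that side to a permutation of the free variable, so the corresponding retract of $f$ is isotopic to $g$ itself; fixing $j \leq m-1$ further $g$-side variables realizes every retract of $g$ of arity at least $2$, and the total number of fixed coordinates, $j + n - 1 - m$, stays in $\left\{1, \ldots, n-2\right\}$, so the hypothesis applies verbatim and gives reducibility of $g$, of $h$, and of all their retracts directly. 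Even then the induction needs bookkeeping you omit --- factors of arity $3$ or $4$ lie below the range $n \geq 5$ of the statement and require a short separate argument --- and none of this touches the semilinear case, which is where the proposition actually lives.
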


Suppose $n \geq 5$ and $f$ is an $n$-ary quasigroup of order 4 such that for all $a \in \Sigma_4$  the quasigroups $f_a^0$ are $\mathbb{Z}_4$-linear. Let us choose arbitrary $k \in \left\{1, \ldots, n-2 \right\}$, numbers  $0 \leq i_1 < \ldots  < i_{k}  \leq n $, and elements $a_{i_1}, \ldots, a_{i_k} \in \Sigma_4$ and consider the quasigroup $f_{a_{i_1}, \ldots, a_{i_k}}^{i_1, \ldots, i_k}$. 

If $i_1 = 0$ then the quasigroup $f_{a_{i_1}, \ldots, a_{i_k}}^{i_1, \ldots, i_k}$ is obtained from the $\mathbb{Z}_4$-linear quasigroup $f_{a_{i_0}}^0$ by fixing values of some indeterminates and so it is reducible.
If $i_1 \neq 0$ then  for all $a \in \Sigma_4$ consider the quasigroups $f_{a, a_{i_1}, \ldots, a_{i_k}}^{0, i_1, \ldots, i_k}$.
 
The quasigroup $f_{a, a_{i_1}, \ldots, a_{i_k}}^{0, i_1, \ldots, i_k}$ is obtained from the $\mathbb{Z}_4$-linear quasigroup $f_{a}^0$ by fixing values of some indeterminates and so it is $\mathbb{Z}_4$-linear. By the inductive assumption, the quasigroup $f_{a_{i_1}, \ldots, a_{i_k}}^{i_1, \ldots, i_k}$ is completely reducible.

Since for all $k \in \left\{1, \ldots, n-2 \right\}$ and for all selections $0 \leq i_1 < \ldots < i_{k} \leq n $ and $a_{i_1}, \ldots, a_{i_k} \in \Sigma_4$ quasigroups $f_{a_{i_1}, \ldots, a_{i_k}}^{i_1, \ldots, i_k}$ are reducible, then by Proposition~\ref{compredvsp}, the quasigroup $f$ is completely reducible.
\end{proof}

\textbf{Proof of Lemma~\ref{numbrind}}

\begin{proof}
Let $\mathcal{A}_n$ be a set of $4 \times (n+1)$ Boolean matrices in which each column contains exactly 2 zeroes and 2 ones. We introduce the following notation:
\begin{itemize}
\item $A^{00}(n)$ is the number of matrices $A \in \mathcal{A}_n$ such that the sum of entries over each row is even.
\item $A^{01}(n)$ is the number of matrices $A \in \mathcal{A}_n$ such that the sum of entries over two rows is even and the sum over other two rows is odd.
\item $A^{11}(n)$ is the number of matrices $A \in \mathcal{A}_n$ such that the sum of entries over each row is odd.
\item $B^{00}(n)$ is the number of matrices $A \in \mathcal{A}_n$ with two pairs of identical rows such that the sum of entries over each row is even.
\item $B^{01}(n)$ is the number of matrices $A \in \mathcal{A}_n$ with two pairs of identical rows such that the sum of entries over two rows is even and the sum over other two rows is odd.
\item $B^{11}(n)$ is the number of matrices $A \in \mathcal{A}_n$ with two pairs of identical rows such that the sum of entries over each row is odd.
\end{itemize}

Note that the number of brindled quadruples in the $(n+1)$-dimensional Boolean hypercube is expressed as
$$W(n) = \frac{1}{4!} \left(A^{00} (n) - B^{00} (n)\right).$$

The numbers $A^{00}(n), A^{01}(n)$, and $A^{11}(n)$ satisfy the following recurrence:
$$A^{00}(n) = A^{01} (n-1); ~ A^{11}(n) = A^{01}(n-1)$$ 
$$A^{01} (n) = 4 A^{01}(n-1) + 6(A^{00}(n-1) + A^{11}(n-1)).$$
Therefore we have
$$A^{01}(n) = 4 A^{01} (n-1) + 12 A^{01}(n-2).$$
Solving this recurrence with $A^{01} (0) = 6$ and $A^{01}(1) = 24$, we obtain
$$A^{01}(n) = \frac{3}{4} \left(6^{n+1} - (-2)^{n+1}\right).$$
Consequently,
$$A^{00}(n) = \frac{3}{4} \left(6^{n} - (-2)^{n}\right).$$

Next we note that $B^{00}(n)$ and $B^{11}(n)$ are equal to zero if $n$ is even, and $B^{01}(n)$ is zero if $n$ is odd.  Also, $B^{00}(n), B^{01}(n)$, and $B^{11}(n)$ satisfy the following relations:
$$B^{00}(2k+1) = B^{01}(2k); ~ B^{11}(2k+1) = B^{01}(2k); $$
$$B^{01}(2k+2) = 2(B^{00}(2k+1) + B^{11}(2k+1)).$$
Then for $B^{01}(2k)$ it holds
$$B^{01}(2k) = 4B^{01}(2k-2).$$
Solving this recurrence with $B^{01}(0) = 6$, we obtain
$$B^{01}(2k) = 6 \cdot 2^{2k}.$$
Consequently $B^{00}(n) = 0$ for even $n$ and $B^{00}(n) = 3 \cdot 2^{n}$ for odd $n$.

Finally, if $n$ is even then
$$W(n) = \frac{1}{24} \left(A^{00} (n) - B^{00} (n)\right) = \frac{1}{32} \left(6^n - 2^n\right),$$
and if $n$ is odd then
$$W(n) = \frac{1}{24} \left(A^{00} (n) - B^{00} (n)\right) = \frac{1}{32} \left(6^n + 2^n - 4 \cdot 2^n \right) = \frac{1}{32} \left(6^n - 3 \cdot 2^n \right).$$
\end{proof}

\section*{Acknowledgments}
The author is grateful to V.N. Potapov for his help and for constant attention to this work.
The author was supported by the Russian Science Foundation (grant 14--11--00555) (Sections 3--7) and the Moebius Contest Foundation for Young Scientists (Sections 1--2).

\end{document}